\DeclareMathOperator\im{Im}
\DeclareMathOperator\coker{coker}
\DeclareMathOperator\Int{Int}
\DeclareMathOperator\lk{lk}
\DeclareMathOperator\Hom{Hom}
\theoremstyle{definition}
\newtheorem{definition}{Definition}[section]
\newtheorem*{definition*}{Definition}
\newtheorem{theorem}{Theorem}
\newtheorem*{theorem*}{Theorem}
\newtheorem{proposition}[definition]{Proposition}
\newtheorem*{proposition*}{Proposition}
\newtheorem{corollary}[definition]{Corollary}
\newtheorem{lemma}[definition]{Lemma}
\newtheorem*{lemma*}{Lemma}
\newtheorem*{Note*}{Note}
\newtheorem*{example*}{Example}
\newcommand{\transpose}[1]{\,{\vphantom{#1}}^t\!{#1}}
\numberwithin{equation}{section}
\begin{document}

\title{HOMOLOGY OF RELATIVE TRISECTION AND ITS APPLICATION}
\author{HOKUTO TANIMOTO}
\date{}
\maketitle
\vspace{-2ex}
\begin{abstract}
Feller, Klug, Schirmer and Zemke showed the homology and the intersection form of a closed trisected 4-manifold are described in terms of trisection diagram. In this paper, it is confirmed that we are able to calculate those of a trisected 4-manifold with boundary in a similar way. Moreover, we describe a representative of the second Stiefel-Whitney class by the relative trisection diagram.
\end{abstract}
\vspace{1ex}

\section{Introduction}  
Gay and Kirby \cite{GK1} introduced a trisection as a decomposition of a 4-manifold into three 4-dimensional handlebodies. They mainly dealt with closed manifolds. Since Castro, Gay and Pinz\'on-Caicedo defined a relative trisection clearly in \cite{CGPC1} and \cite{CGPC2}, we are able to deal with the case of 4-manifolds with boundary in a similar way to the closed case such as a trisection diagram given by three families of curves on the central surface.

Feller, Klug, Schirmer and Zemke \cite{FKSZ1} expressd the homology and the intersection form of a closed 4-manifold in terms of trisection diagram in the way one of three families of curves plays a key role. On the other hand, Florens and Moussard \cite{FM1} described the homology such that roles of three families are symmetric.

 In this paper, we show that the homology and the intersection form of a trisected 4-manifold with boundary can be calculated in a similar way to \cite{FKSZ1} and \cite{FM1}. Our approaches derive from handle decompositions of the manifold associated with the relative trisection in both cases. Using a chain complex given by this approach, we express a representative of the second Stiefel-Whitney class in terms of relative trisection diagram. As a corollary, we obtain a necessary and sufficient condition for the existence of spin structures on a 4-manifold, which is described by curves on the central surface. 

\vspace{1ex}

This paper is organized as follows. In Section 2, we recall briefly definitions regarding relative trisections and their diagrams, and then state the main results of this paper. In Section 3, we consider two handle decompositions associated with the relative trisection:  one's union of a 0-handle and 1-handles is $X_1$ and the other's union is $D^2 \times \Sigma$. In Section 4, we review several facts of 3-dimensional topology. In Section 5, we compute the homology and the intersection form of $X$. As an application, we give a description of the second Stiefel-Whitney class in Section 6.

\vspace{1ex}

The author would like to thank Hisaaki Endo for many discussions and encouragement. 

\section{Main results}  
Let $X$ be a compact, connected, oriented, smooth 4-manifold with connected boundary. Relative trisections are more complicated than usual ones for closed 4-manifolds. We employ the same definition as \cite{CGPC1} and \cite{CGPC2}. Let $g, k= (k_1, k_2, k_3), p, b$ be integers with $g \ge p \ge 0$, $b \ge 1$ and $g+p+b-1 \ge k_i \ge 2p+b-1$, and put $l = 2p+b-1$. $D$, $\partial^0 D$ and $\partial^{\pm} D$ denote a third of a unit 2-dimensional disk, its arc and $\partial D \setminus \Int \partial^0 D$, respectively. Moreover, let $\partial^-D$ and $\partial^+D$ be each radius.
\begin{definition}
A $(g,k;p,b)$-{\it relative trisection} of $X$ is a decomposition $X = X_1 \cup X_2 \cup X_3$ such that:
\begin{enumerate}
\item[i)] $X_i \cap X_2 \cap X_3$ is diffeomorphic to $\Sigma$, a genus $g$ surface with $b$ boundary components;
   \vspace{0.5ex}
\item[ii)] $X_i \cap X_j = \partial X_i \cap \partial X_j$ for $i \neq j$, and each of them is diffeomorphic to a 3-dimensional compression body from $\Sigma$ to a genus $p$ surface $P$;
   \vspace{0.5ex}
\item[iii)] $X_i$ is diffeomorphic to $\natural^{k_i} S^1 \times D^3$;
   \vspace{0.5ex}
\item[iv)] $X_i \cap \partial X$ is diffeomorphic to $P \times \partial^0 D \cup  \partial P \times D$.
   \vspace{0.5ex}
\item[v)] $(X_{i-1} \cap X_i) \cup (X_i \cap X_{i+1})$ is a sutured Heegaard splitting of $P \times \partial^{\pm} D \, \sharp \, ( \sharp^{k_i-l} S^1 \times S^2)$.
\end{enumerate}
\end{definition}
We orient each compression body $X_i \cap X_{i-1}$ as a submanifold of $\partial X_i$,  surfaces $X_1 \cap X_2 \cap X_3$ and $(X_i \cap X_{i-1}) \cap \partial X$ as submanifolds of $\partial (X_i \cap X_{i-1})$. 

The above definition can be extended to a 4-manifold with more than one boundary components. In that case, our main results are also true. However, we assume that the  boundary is connected for simplicity. 

\vspace{1ex}

Same as Heegaard splittings and trisections, there is a diagram that determines the relative trisection. 
Let $\mu$, $\nu$ be a family of $g-p$ disjoint simple closed curves on $\Sigma$. For $(\Sigma; \mu, \nu)$ and $(\Sigma; \mu', \nu')$, we call that $(\Sigma; \mu, \nu)$ is {\it diffeomorphism and handle slide equivalent} to $(\Sigma; \mu', \nu')$ if they are related by a diffeomorphism between $\Sigma$ and a sequence of handle slides within each $\mu$ and $\nu$.
\begin{definition} \label{def:diagram}
A {\it $(g,k;p,b)$-relative trisection diagram} $(\Sigma; \alpha, \beta, \gamma)$ is a 4-tuple such that:
\begin{enumerate}
\item[i)] $\Sigma$ is a genus $g$ surface with $b$ boundary components;
   \vspace{0.5ex}
\item[ii)] each $\alpha , \beta$ and $\gamma$ is a family of $g-p$ disjoint simple closed curves on $\Sigma$;
   \vspace{0.5ex}
\item[iii)] each triple $(\Sigma; \alpha, \beta), \, (\Sigma; \beta, \gamma), \, (\Sigma; \gamma, \alpha)$ is diffeomorphism and handle slide equivalent to $(\Sigma; \delta^{k_i}, \epsilon^{k_i})$ shown in Figure 1, which is a standard sutured Heegaard diagram of $P \times \partial^{\pm} D \, \sharp \, ( \sharp^{k_i-l} S^1 \times S^2)$.
\end{enumerate}
\begin{figure}[H]
\centering
\includegraphics[keepaspectratio,scale=0.8,pagebox=cropbox]{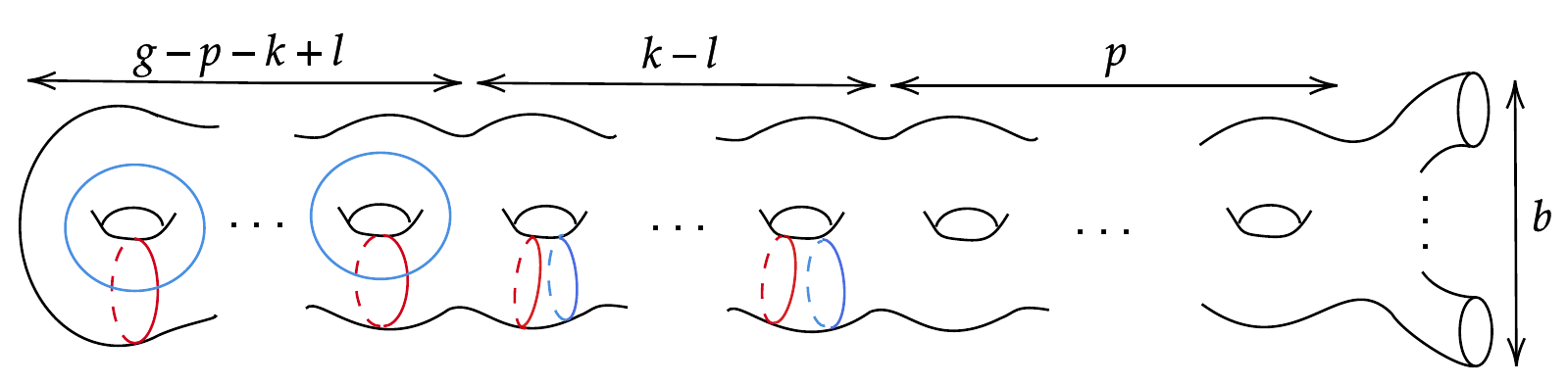}
\caption{A standard diagram $(\Sigma; \delta^{k}, \epsilon^{k})$, where the red curves are $\delta^k$, and blue curves are $\epsilon^k$.  }  
\end{figure}
\end{definition}
For $\nu \in \{ \alpha, \beta, \gamma\}$, let $C_{\nu}$ be a compression body given by attaching 3-dimensional 2-handles to $\Sigma \times I$ along a family of curves $\nu \times \{1\}$. Furthermore, $\Sigma_{\nu}$ denotes a surface given by performing surgeries to $\Sigma \times \{1\}$ along $\nu \times \{1\}$. A relative trisection is determined by the spine $C_{\alpha} \cup C_{\beta} \cup C_{\gamma}$, that is, a relative trisection diagram describes the relative trisection such that $X_1 \cap X_2 \cap X_3 = \Sigma$, $X_3 \cap X_1 = C_{\alpha}$, $X_1 \cap X_2 = C_{\beta}$ and  $X_2 \cap X_3 = C_{\gamma}$, respectively.

\vspace{1ex}

For each homomorphism induced by inclusions $\iota_{\nu} \colon H_1 (\Sigma) \to H_1(C_{\nu})$ and $\iota^{\partial}_{\nu} \colon \\ H_1(\Sigma , \partial \Sigma) \to H_1(C_{\nu} , \overline{\partial C_{\nu} \setminus \Sigma})$, let $L_{\nu}$ be $\ker \iota_{\nu}$, and $L^{\partial}_{\nu}$  $\ker \iota^{\partial}_{\nu}$. Using this subgroups, we calculate the homology in two ways.  

\begin{theorem} \label{theorem1}
The homology of $X$ can be obtained from the following chain complex $C^Y$: \vspace{-1ex}
{\small \begin{figure}[H]
\begin{tikzcd}
            0 \ar{r} &[-1em] (L_{\alpha} \cap L_{\gamma}) \oplus (L_{\beta} \cap L_{\gamma}) \ar{r}{\pi} &[-0.25em] L_{\gamma} \ar{r}{\rho} &[-0.25em] \Hom(L_{\alpha}^{\partial} \cap L_{\beta}^{\partial} , \mathbb{Z}) \ar{r}{0} &[-0.5em] \mathbb{Z} \ar{r} &[-1em] 0,
\end{tikzcd}
\end{figure}} \vspace{-2ex}
\noindent
where $\pi (x,y) = x+y$ and $\rho(x) = \langle - , x \rangle_{\Sigma}$.
\end{theorem}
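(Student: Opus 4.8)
The plan is to read off a handle chain complex of $X$ from the handle decomposition of Section~3 in which the union of the $0$-handle and the $1$-handles is $X_1 \cong \natural^{k_1} S^1 \times D^3$, and then to reduce it algebraically to the four-term complex $C^Y$. That decomposition contributes a single $0$-handle and $k_1$ $1$-handles, the intermediate handles are $2$-handles attached along the curve families carried by $\Sigma$, and the top of the decomposition is recovered, turned upside down, from $X_2$ and $X_3$ as $3$- and $4$-handles. The first task is thus to write the cellular chain complex $C_\bullet = (C_4 \to C_3 \to C_2 \to C_1 \to C_0)$, with each $C_i$ free on the index-$i$ handles, and to record each boundary operator. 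Since there is a single $0$-handle, $\partial_1 = 0$, which already explains both the zero map into $\mathbb{Z}$ and why $C_0 = \mathbb{Z}$ survives to give $H_0(X) = \mathbb{Z}$; the absence of a degree-$4$ term reflects $H_4(X) = 0$.

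The heart of the computation is to express the middle boundary operators in terms of the curve systems. Using $X_3 \cap X_1 = C_\alpha$, $X_1 \cap X_2 = C_\beta$ and $X_2 \cap X_3 = C_\gamma$, I would show that the attaching data of the $2$-handles is recorded by the homology classes of the curve families in $H_1(\Sigma)$, so that the kernels $L_\nu = \ker \iota_\nu$ (the spans of the curves $\nu$) govern the middle homology, with $\gamma$ ending up in the central degree-$2$ slot as $L_\gamma$. For the upper handles I would invoke Poincar\'e--Lefschetz duality: the $3$-handles of $X$ are dual to $1$-handles of a dual decomposition, so their contribution is most naturally a group of homomorphisms, which is the source of the factor $\Hom(L_\alpha^\partial \cap L_\beta^\partial, \mathbb{Z})$; the relative kernels $L_\nu^\partial = \ker \iota_\nu^\partial$ enter through the behaviour of the compression bodies $C_\alpha, C_\beta$ along $\partial X$. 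The map $\rho(x) = \langle -, x \rangle_\Sigma$ should then be identified with the Lefschetz duality pairing $H_1(\Sigma) \times H_1(\Sigma, \partial\Sigma) \to \mathbb{Z}$ restricted to these subgroups, which is exactly the pairing produced by the handle attachments.

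With the boundary maps in hand, I would reduce the large complex to the stated one by algebraic cancellation of the $1$-handle / $2$-handle and $2$-handle / $3$-handle pairs that are geometrically dual to the compressing disks (Gaussian elimination on the boundary matrices, equivalently a base change diagonalizing the parts of $\partial$ carried by the standard diagram of Figure~1). Tracking how $L_\alpha \cap L_\gamma$ and $L_\beta \cap L_\gamma$ embed into $L_\gamma$, the surviving degree-$3$ boundary becomes $\pi(x,y) = x+y$, and the remaining nonzero map is $\rho$. Checking the chain-complex identity $\rho \circ \pi = 0$ reduces to the vanishing of $\langle -, - \rangle_\Sigma$ between the absolute kernel $L_\nu$ and the relative kernel $L_\nu^\partial$ of each compression body, which follows from the $3$-dimensional facts to be assembled in Section~4.

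The essential difficulty, relative to the closed case of \cite{FKSZ1}, is the boundary: the pieces $X_i \cap X_j$ are compression bodies rather than handlebodies, the gluings along $\partial X$ are governed by the sutured Heegaard condition (v), and one must keep the absolute classes $L_\nu$ and the relative classes $L_\nu^\partial$ correctly separated throughout the reduction. Concretely, the hardest step is to justify that the upper portion of the complex is precisely $\Hom(L_\alpha^\partial \cap L_\beta^\partial, \mathbb{Z})$ and that the pairing realizing $\rho$ is the Lefschetz duality pairing between $H_1(\Sigma)$ and $H_1(\Sigma, \partial\Sigma)$; verifying this identification, and the exactness it forces at the top of the complex, is where the $3$-dimensional lemmas of Section~4 on the homology of compression bodies and on the interaction of the three families along the suture will do the real work.
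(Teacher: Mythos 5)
Your overall frame --- take the handle decomposition of Section~3 whose $0$- and $1$-handles form $X_1$, identify the chain groups with groups built from the curve classes, and express the boundary maps through the intersection pairing on $\Sigma$ --- is the paper's frame, but two structural choices in your plan are wrong, and they are not cosmetic. First, you attribute $\Hom(L_{\alpha}^{\partial} \cap L_{\beta}^{\partial}, \mathbb{Z})$ to the $3$-handles via Poincar\'e--Lefschetz duality. In $C^Y$ that group sits in degree one: as you yourself observe, the single $0$-handle forces $\partial_1 = 0$ into $C_0 = \mathbb{Z}$, so the term adjacent to $\mathbb{Z}$ is $C_1$ --- your two statements contradict each other. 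In the paper, $C_1$ is free on the $k_1$ $1$-handles of $X_1$, and the identification with $\Hom(L_{\alpha}^{\partial} \cap L_{\beta}^{\partial}, \mathbb{Z})$ sends each $1$-handle to its belt sphere in $\partial X_1 \cong \sharp^{k_1} S^1 \times S^2$, transfers by the isomorphism $b$ of Lemma~\ref{lemma4.4} into $H_2(M_{\alpha\beta}, \partial M_{\alpha\beta})$, applies the Mayer--Vietoris boundary $\partial'_{\alpha\beta}$ of Lemma~\ref{lemma4.2}(1) to land in $L_{\alpha}^{\partial} \cap L_{\beta}^{\partial}$, and finally dualizes with respect to the resulting basis; Lemma~\ref{lemma4.2}(2) then converts the handle boundary $\partial_2$ into $\rho = \langle -, x \rangle_{\Sigma}$. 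No duality of $X$ enters, and the relevant pairing is reached through the sutured Heegaard splitting $M_{\alpha\beta} \subset \partial X_1$, not through Lefschetz duality of the $4$-manifold. The $3$-handles instead produce the \emph{leftmost} term: their attaching spheres give $H_2(M_{\gamma\alpha}) \oplus H_2(M_{\beta\gamma}) \cong (L_{\gamma} \cap L_{\alpha}) \oplus (L_{\beta} \cap L_{\gamma})$, with no dualization anywhere. Second, your proposed Gaussian elimination of handle pairs is neither needed nor possible: the handle numbers of $Y$ are exactly $k_1$, $g-p$ and $k_2 + k_3 - 2l$, which already equal the ranks of the stated groups, so the paper's proof is a degreewise chain \emph{isomorphism} $f_*$; any cancellation step would shrink the complex and could not output $C^Y$.

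The step you identify as hardest (the $\Hom$ identification and the pairing for $\rho$) is in fact the routine part, handled directly by Lemmas~\ref{lemma4.2} and~\ref{lemma4.4}; the genuinely hard step, absent from your proposal, is proving $\pi \circ f_3 = f_2 \circ \partial_3$. This requires isotoping the attaching sphere $S$ of each $3$-handle inside $M_{\gamma\alpha}$ (resp.\ $M_{\beta\gamma}$) into a normal form meeting the thickened $2$-handles only in disks parallel to their cores --- the paper does this by repeatedly simplifying the planar pieces $\Sigma_{0,t_j}$ of $S \cap (D^2 \times D^1)$ and pushing the components with null-homologous boundary off the cocore --- so that $[S_{\gamma} \cap \Sigma] = \sum_j \langle S, (\{0\} \times D^1)_j \rangle f_2(h_j)$, matching the Mayer--Vietoris image with the count of intersections with belt spheres. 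Your phrase ``tracking how $L_{\alpha} \cap L_{\gamma}$ and $L_{\beta} \cap L_{\gamma}$ embed into $L_{\gamma}$'' presupposes exactly this geometric statement. A smaller point: the absence of $4$-handles is not a reflection of $H_4(X) = 0$ but the output of the cancellations performed in Lemma~\ref{lemma3.1}, and computing $H_*(X)$ from $Y$ at all requires the deformation retraction of Lemma~3.3, which your outline never invokes.
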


\begin{theorem} \label{theorem2}
The homology of $X$ can also be obtained from the following chain complex $C^Z$: \vspace{-1ex}
{\small \begin{figure}[H]
\begin{tikzcd}
  0 \ar{r} &[-1.5em] (L_{\alpha} \cap L_{\beta}) \oplus (L_{\beta} \cap L_{\gamma}) \oplus (L_{\gamma} \cap L_{\alpha}) \ar{r}{\zeta} &[-1.2em] L_{\alpha} \oplus L_{\beta} \oplus L_{\gamma} \ar{r}{\iota} &[-1.2em] H_1(\Sigma) \ar{r}{0} &[-1.2em] \mathbb{Z} \ar{r} &[-1.5em] 0,
\end{tikzcd}
\end{figure}} \vspace{-2ex}
\noindent
where $\zeta (x, y, z) = (x-z, y-x, z-y)$ and $\iota$ is a homomorphsim induced by the inclusions $\iota_{\nu}$.
\end{theorem}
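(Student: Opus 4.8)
The plan is to compute $H_*(X)$ directly from the handle decomposition of Section~3 in which the union of the $0$-handle and the $1$-handles is $D^2 \times \Sigma$, the $2$-handles being attached along the three families $\alpha, \beta, \gamma$, each pushed into a distinct sector of $\Sigma \times \partial D^2 \subset \partial(D^2 \times \Sigma)$. Since $X$ has nonempty boundary it collapses to a $3$-complex, so this decomposition may be taken to have no $4$-handles, and its cellular chain complex $C_*$ has length four. A single $0$-handle gives $C_0 = \mathbb{Z}$. The $2g+b-1$ one-handles of $D^2 \times \Sigma$ give $C_1$, which I identify with $H_1(\Sigma) = H_1(D^2 \times \Sigma)$; as there is only one $0$-handle, $\partial_1 = 0$, matching the zero map $H_1(\Sigma) \to \mathbb{Z}$ in $C^Z$. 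The entire proof then reduces to identifying $C_2, C_3$ and the differentials $\partial_2, \partial_3$ with $L_\alpha \oplus L_\beta \oplus L_\gamma$, $\,(L_\alpha \cap L_\beta) \oplus (L_\beta \cap L_\gamma) \oplus (L_\gamma \cap L_\alpha)$, $\iota$ and $\zeta$.

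First I would handle $\partial_2$. The $2$-handle attached along a curve $\nu_i$ of a family $\nu \in \{\alpha, \beta, \gamma\}$ has attaching circle representing $[\nu_i] \in H_1(\Sigma) = C_1$, so $\partial_2$ sends the corresponding generator to $[\nu_i]$. By Definition~\ref{def:diagram} and the standard form of Figure~1, each family consists of $g-p$ curves whose classes are independent and generate $L_\nu = \ker \iota_\nu$ (here the $3$-dimensional facts of Section~4 enter). Consequently $C_2 \cong L_\alpha \oplus L_\beta \oplus L_\gamma$, the three summands recording the three sectors, and under this identification $\partial_2$ is exactly $\iota(a,b,c) = a+b+c$. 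This already produces $H_0(X) = \mathbb{Z}$ and $H_1(X) = H_1(\Sigma)/(L_\alpha + L_\beta + L_\gamma)$.

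The heart of the argument, and the step I expect to be hardest, is $\partial_3$. A class in $L_\mu \cap L_\nu$ for two adjacent sectors bounds compressing disks in both corresponding compression bodies; gluing these two disks along their common boundary curve on $\Sigma$ yields a $2$-sphere that serves as the attaching sphere of a $3$-handle, and there is one such contribution for each of the three cyclically adjacent pairs $(\alpha, \beta), (\beta, \gamma), (\gamma, \alpha)$. Using condition~(v) of the definition of relative trisection together with the compression-body facts of Section~4, I would show that after cancelling handle pairs $C_3$ may be taken to be $(L_\alpha \cap L_\beta) \oplus (L_\beta \cap L_\gamma) \oplus (L_\gamma \cap L_\alpha)$, with $\partial_3$ sending such a class to the difference of its two expressions as a boundary, one in each adjacent sector. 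Keeping track of the sectors cyclically and fixing orientations then gives $\partial_3(x,y,z) = (x-z,\, y-x,\, z-y) = \zeta(x,y,z)$, whose cyclic-difference form reflects the symmetric roles of $X_1, X_2, X_3$ (it is, up to sign, the incidence map of the nerve of the cover). The delicate points here are the sign and orientation bookkeeping across the three sectors and verifying that no further $3$-handles survive the cancellation.

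Finally I would check that $C^Z$ computes the absolute homology $H_*(X)$ rather than $H_*(X, \partial X)$. In the $D^2 \times \Sigma$ decomposition every $2$-handle is attached in the interior of $\Sigma \times \partial D^2$, so its attaching class is an honest element of $H_1(\Sigma)$ and the relative subgroups $L_\nu^\partial$ governing the dual complex $C^Y$ of Theorem~\ref{theorem1} do not intervene. Reading off the homology of $C^Z$ then gives $H_2(X) = \ker \iota / \im \zeta$ and $H_3(X) = \ker \zeta \cong L_\alpha \cap L_\beta \cap L_\gamma$, together with $H_4(X) = 0$, which identifies $H_*(X)$ with the homology of $C^Z$ and completes the proof.
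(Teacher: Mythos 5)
Your proposal is correct and takes essentially the same route as the paper: both compute the cellular chain complex of the handlebody $Z$ built on $Z^1 \cong \Sigma \times D^2$ from Section 3, identify $C'_1 \cong H_1(\Sigma)$, $C'_2 \cong L_{\alpha} \oplus L_{\beta} \oplus L_{\gamma}$ via attaching circles, and $C'_3 \cong (L_{\alpha} \cap L_{\beta}) \oplus (L_{\beta} \cap L_{\gamma}) \oplus (L_{\gamma} \cap L_{\alpha})$ via the attaching spheres of the $3$-handles together with the Mayer--Vietoris isomorphisms $H_2(M_{\mu\nu}) \cong L_{\mu} \cap L_{\nu}$ of Lemma 4.2(1), arriving at $\partial'_2 = \iota$ and $\partial'_3 = \zeta$ with exactly the sign pattern $(x, -x, 0)$ per sector that you predict. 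The one step you leave as a plan --- making $\partial'_3$ rigorous --- is precisely what the paper supplies by isotoping each attaching sphere to meet the compression bodies only in disks parallel to the cores of the $2$-handles (the isotopy argument from the proof of Theorem 1, reused here) and by the orientation observation that the belt sphere of a $\beta$-side $2$-handle appears as $-(\{0\} \times D^1)$ in $M_{\alpha\beta}$ while its core disk still meets it with intersection number $+1$ in $-C_{\beta}$; no further handle cancellation is needed, since the count $\sum_i k_i - 3l$ of $3$-handles from Lemma 3.2 already matches the total rank.
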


In both cases, $H_2(X)$ is isomorphic to the group described by $L_{\nu}$ in the same way as \cite{FKSZ1}. Moreover, the intersection form of $X$ is also described by this group and the intersection form of $\Sigma$. 
\vspace{1ex}

Let $\mu$, $\nu$ be families of oriented simple closed curves on $\Sigma$ or families of oriented arcs proper embedded in $\Sigma$. 
\begin{definition}
For $\mu$ and $\nu$, an intersection matrix $_{\mu}Q_{\nu}$ of $\mu$, $\nu$ is the matrix which consists of the intersection numbers $\langle [\mu_i] , [\nu_j] \rangle_{\Sigma}$. 
\end{definition}
For a relative trisection diagram $(\Sigma ; \alpha , \beta, \gamma)$, performing handleslides on $\alpha$ and  $\beta$, we can suppose by Definition \ref{def:diagram}.iii) that $(\alpha, \beta)$ is diffeomorphism equivalent to $(\delta^{k_1}, \epsilon^{k_1})$. Therefore, there exists a family of arcs $a$ such that $\{[\alpha] , [a]\}$ and $\{[\beta] , [a]\}$ are bases of $L_{\alpha}^{\partial}$ and $L_{\beta}^{\partial}$, respectively. We define several matrices to describe the second Stiefel-Whitney class as follows;
\begin{gather}
_{\gamma}Q_{\beta,\partial} = \left( \begin{array}{cc} _{\gamma}Q_{\beta} & _{\gamma}Q_a\\
 \end{array}\right) , \quad _{\partial}^{\alpha}Q_{\gamma} = \left( \begin{array}{c} _{\alpha}Q_{\gamma} \\ _a Q_{\gamma} \\ \end{array} \right), \notag \\[1ex]
R^g_{p,b} = I_{g-p} \oplus^p \begin{pmatrix} 0 & 0 \\ 1 & 0 \end{pmatrix} \oplus O_{b-1}. \notag
\end{gather}

\begin{theorem} \label{theorem3}
Suppose that $(\alpha,\beta)$ is diffeomorphism equivalent to $(\delta^{k_1} , \epsilon^{k_1})$. Then, as a representative of the second Stiefel-Whitney class $w_2$, we can obtain $c \colon L_{\gamma} \to \mathbb{Z}_2$ defined by the following formula with respect to the $\gamma$-basis
\[ c(x) = \sum_{i=1}^{g-p} \left( _{\gamma}Q_{\beta , \partial} \, \, R^g_{p,b} \hspace{1ex} \, ^{\alpha}_{\partial}Q_{\gamma} \right)_{i \, i} \,  x_i \pmod2 . \]
\end{theorem}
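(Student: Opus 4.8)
The plan is to recognize $c$ as the mod-$2$ self-intersection pairing on $H_2(X)$ and then invoke the geometric characterization of $w_2$. Recall first that for any closed oriented surface $F$ smoothly embedded in $X$, the splitting $TX|_F = TF \oplus \nu_F$ together with the evenness of $\chi(F)$ gives $\langle w_2(X),[F]\rangle \equiv [F]\cdot[F] \pmod 2$, where $[F]\cdot[F]$ is the normal Euler number. Since every integral class in $H_2(X)$ is represented by such a surface, $w_2(X)$ is determined by the rule $x\mapsto x\cdot x \bmod 2$ on $H_2(X)$; and because $\mathbb{Z}_2$ is a field, $H^2(X;\mathbb{Z}_2)\cong \Hom(H_2(X;\mathbb{Z}_2),\mathbb{Z}_2)$, so it suffices to produce a cocycle in the mod-$2$ dual of $C^Y$ that realizes this rule and to match it with the stated formula. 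Under Theorem \ref{theorem1} the group $H_2(X)=\ker\rho/\im\pi$ sits in the $L_\gamma$-spot, so $2$-cycles are represented by $x=\sum_i x_i[\gamma_i]\in\ker\rho$ written in the $\gamma$-basis.

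First I would record the self-intersection form on $H_2(X)$ in this $\gamma$-basis, using the intersection-form computation of Section 5. For $x\in\ker\rho$ a representative closed surface $F_x$ is assembled from a compressing subsurface of $C_\gamma$ bounded by $x$ and from capping subsurfaces in the two compression bodies $C_\alpha=X_3\cap X_1$ and $C_\beta=X_1\cap X_2$; the condition $x\in\ker\rho$ is exactly what allows these caps to close up across $\partial X$. Pushing $F_x$ off itself and counting the resulting intersection points on $\Sigma$ expresses $[F_x]\cdot[F_x]$ as a bilinear pairing built from the intersection numbers of $\gamma$ with $\beta$ and with the arcs $a$ on one side, and of $\alpha$ and $a$ with $\gamma$ on the other. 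Invoking the hypothesis that $(\alpha,\beta)$ is diffeomorphism equivalent to the standard pair $(\delta^{k_1},\epsilon^{k_1})$ of Figure 1, the bases $\{[\alpha],[a]\}$ and $\{[\beta],[a]\}$ of $L^\partial_\alpha$ and $L^\partial_\beta$ become the standard ones, and the pairing that glues the $\alpha$-side caps to the $\beta$-side caps can be read off from the standard sutured Heegaard picture to be exactly $R^g_{p,b}$: the identity block accounts for the $g-p$ compressible handles, each nilpotent $2\times 2$ block for one of the $p$ interior genus pairs, and the zero block for the $b-1$ extra boundary directions. This identifies the self-intersection matrix in the $\gamma$-basis with $M := {}_{\gamma}Q_{\beta,\partial}\,R^g_{p,b}\,{}^{\alpha}_{\partial}Q_{\gamma}$, so that $[F_x]\cdot[F_x]=\transpose{x}Mx$.

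Next I would pass to $\mathbb{Z}_2$ and reduce the quadratic expression to its diagonal. Since $\transpose{x}Mx$ is the evaluation of the symmetric, integer-valued intersection form of $X$, testing on $x=e_i+e_j$ forces $M_{ij}+M_{ji}\equiv 0 \pmod 2$ for $i\neq j$; as $x_i^2\equiv x_i$, the cross terms then cancel in pairs and $\transpose{x}Mx\equiv\sum_{i=1}^{g-p}M_{ii}x_i\pmod 2$, which is precisely the formula defining $c$. It remains to check that $c$ is a genuine cocycle, i.e.\ that $c\circ\pi\equiv 0$: every element of $\im\pi=(L_\alpha\cap L_\gamma)+(L_\beta\cap L_\gamma)$ is null-homologous in $X$, so its self-intersection vanishes in $\mathbb{Z}$, and the reduction just obtained forces $c$ to vanish on $\im\pi$. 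Combining this with the first paragraph, $c$ descends to $H_2(X)$ as $x\mapsto x\cdot x\bmod 2$ and therefore represents $w_2(X)$; a short comparison over $\mathbb{Z}_2$ (mod-$2$ reductions of integral classes detect $w_2$, or one checks the rule directly on mod-$2$ cycles) upgrades agreement on integral classes to equality in $H^2(X;\mathbb{Z}_2)$.

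The main obstacle is the middle step: building the representative surface $F_x$ compatibly with the three compression bodies and honestly tracking every intersection contribution—especially the relative terms coming from the arcs $a$, which have no analogue in the closed case of \cite{FKSZ1}—and then proving that the gluing pairing between the $\alpha$- and $\beta$-sides is exactly $R^g_{p,b}$ rather than some handle-slide-dependent variant. Once the normalization $(\alpha,\beta)=(\delta^{k_1},\epsilon^{k_1})$ is used to pin down the standard bases and the block structure of $R^g_{p,b}$, the remaining algebra (the parity of $M_{ij}+M_{ji}$ and the cocycle condition on $\im\pi$) is routine.
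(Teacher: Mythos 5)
Your proposal rests on the claim that, for this $X$, $w_2$ is detected by mod-$2$ self-intersections of integral homology classes, and this is where it breaks: $X$ is a 4-manifold \emph{with boundary}, and there the rule $x \mapsto x \cdot x$ on $H_2(X;\mathbb{Z})$ does not determine $w_2$. The point is that $H_2(X;\mathbb{Z}_2) \cong \bigl( H_2(X;\mathbb{Z}) \otimes \mathbb{Z}_2 \bigr) \oplus \mathrm{Tor}(H_1(X;\mathbb{Z}),\mathbb{Z}_2)$, so when $H_1(X)$ has $2$-torsion there are mod-$2$ classes invisible to integral surfaces, and your closing hedge (``one checks the rule directly on mod-2 cycles'') invokes Wu's identity $\langle w_2, x \rangle = x \cdot x$, which is a \emph{closed}-manifold theorem. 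A concrete counterexample inside the theorem's hypotheses: let $X$ be a tubular neighborhood of a standardly embedded $\mathbb{RP}^2$ in $S^4$ (compact, connected, oriented, connected boundary, hence relatively trisectable). Then $H_2(X;\mathbb{Z}) = 0$, so every integral class has zero self-intersection, yet $\langle w_2(X), [\mathbb{RP}^2] \rangle \equiv \chi(\mathbb{RP}^2) + e \equiv 1 \pmod 2$, so $w_2 \neq 0$; moreover the mod-$2$ self-intersection of $[\mathbb{RP}^2]$ is $e \equiv 0$, so even the mod-$2$ intersection form fails to see $w_2$. Your approach therefore cannot, even in principle, certify the formula for $c$ as a representative of $w_2$, and it would also invalidate the downstream use in Corollary 6.1, where the spin criterion tests $c$ against \emph{all} of $L_\gamma$.

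There are two further structural gaps. First, your surface $F_x$ exists only for $x \in \ker\rho$, so the identity $[F_x]\cdot[F_x] = \transpose{x}Mx$ is available only on cycles; the diagonal-reduction step, which tests the form on $e_i + e_j$ (generally \emph{not} in $\ker\rho$), and the resulting formula for $c$ on all of $L_\gamma$, are unjustified — the theorem is a cochain-level statement, not merely a statement about a class on $H_2$. Second, you never actually identify $M_{ii}$ with anything attached to the $i$-th generator individually. The paper's route avoids all of this: by Lemma \ref{lemma3.1} the trisection yields a Kirby diagram in dotted-circle notation (dotted circles $\partial(a_i \times D^1)$ and $\alpha_i$ parallel to $\beta_i$, $2$-handles along $\gamma$ with $\Sigma$-framing), and the Gompf--Stipsicz theorem says $w_2$ is represented by the cochain assigning to each $2$-handle its framing coefficient mod $2$ — valid at the cochain level, with no torsion caveat. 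The framing of $\gamma_i$ is then computed as a linking number via Lemma \ref{lemma4.3}, producing an explicit $j_{\gamma_i} \in L_\alpha^\partial$ whose coordinates involve $R^g_{p,b}$ through the change of Heegaard system in $\partial C_\alpha$ (the relation $[a^*_{2k}] = [a^*_{2k} \times \{-1\}] + [\partial(a_{2k-1} \times [-1,0])]$), giving $c(\gamma_i) = -\langle j_{\gamma_i}, \gamma_i \rangle_\Sigma = \bigl( {}_{\gamma}Q_{\beta,\partial}\, R^g_{p,b} \, {}^{\alpha}_{\partial}Q_{\gamma} \bigr)_{ii}$. If you want to salvage your outline, you would have to replace the self-intersection characterization with the framing-cocycle theorem (or some equivalent obstruction-theoretic statement valid for manifolds with boundary) and derive $M_{ii}$ as the $i$-th framing, which is essentially the paper's proof.
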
 

Next, let $a$ be a family of arcs such that $\{[a]\}$ is a basis of $H_1(\Sigma , \partial \Sigma)$.  Regarding the complex $C^Z$, we can describe a representative by using the following matrix;
\begin{gather}
_{\nu}Q_a = \left( \begin{array}{cc} _{\alpha}Q_a \\ _{\beta}Q_a \\ _{\gamma}Q_a \end{array} \right), \quad _aQ_{\nu} = - ^t(_{\nu}Q_a), \notag \\[1ex]
S_{g,b} = \oplus^g \begin{pmatrix} 0 & 0 \\ 1 & 0 \end{pmatrix} \oplus O_{b-1}. \notag
\end{gather}

\begin{theorem} \label{theorem4}
As a representative of the second Stiefel-Whitney class, we can obtain $c' \colon L_{\alpha} \oplus L_{\beta} \oplus L_{\gamma} \to \mathbb{Z}$ defined by the following formula with respect to the $\alpha$-, $\beta$-, $\gamma$-basis
\[c'(x) = \sum_{i=1}^{3(g-p)} \left( _{\nu}Q_a \, \,  S_{g,b} \, \hspace{0.7ex} _aQ_{\nu} \right)_{i \, i} x_i \pmod{2}. \]
\end{theorem}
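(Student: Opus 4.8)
The plan is to combine the Wu-formula description of $w_2$ with the explicit model of $H_2(X)$ and its intersection form coming from Theorem \ref{theorem2} and the computations of Section 5. Since $X$ is oriented we have $w_1 = 0$, so the Wu class $v_2$ equals $w_2$, and $w_2 \in H^2(X;\mathbb{Z}_2)$ is the unique class with $\langle w_2, \bar x\rangle \equiv \bar x \cdot \bar x \pmod 2$ for every $\bar x \in H_2(X;\mathbb{Z}_2)$, where $\bar x\cdot\bar x$ is the self-intersection. Concretely I want to produce a mod-$2$ cocycle of the complex $C^Z$ in degree two, that is, a homomorphism $c'\colon L_\alpha\oplus L_\beta\oplus L_\gamma \to \mathbb{Z}_2$ with $c'\circ\zeta = 0$ whose induced functional on $H_2(X)=\ker\iota/\im\zeta$ is $\bar x\mapsto \bar x\cdot\bar x \bmod 2$, and then to match it with the stated diagonal formula.

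First I would fix, for each generator $e_i$ of the chosen $\nu$-bases of $L_\alpha,L_\beta,L_\gamma$, the surface it determines: since $e_i\in L_\nu=\ker\iota_\nu$ the corresponding curve bounds in the compression body $C_\nu$, so capping it there and using the product collar $\Sigma\times I$ supplied by the $D^2\times\Sigma$ handle decomposition of Section 3 produces a properly framed relative piece with a well-defined self-framing integer $f_i$. A cycle $x=(x_\alpha,x_\beta,x_\gamma)\in\ker\iota$ assembles these pieces into a closed surface $F$ representing $\bar x$, and I would compute $F\cdot F$ by the standard pushoff count: the intersections occur in the collar and are read off from the intersection numbers on $\Sigma$ of the participating curves together with the cap framings. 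The outcome takes the shape $F\cdot F=\sum_i f_i\,x_i + 2\,(\text{cross terms})$, where the cross terms pair distinct generators symmetrically and are hence even. Reducing mod $2$ and using $x_i^2\equiv x_i \pmod 2$ gives $\bar x\cdot\bar x\equiv \sum_i f_i\,x_i \pmod 2$, so $x\mapsto \sum_i f_i x_i \bmod 2$ represents $w_2$. A point to note here is that the naive bilinear expression built from $_\nu Q_a$ vanishes on cycles (a cycle has $x_\alpha+x_\beta+x_\gamma=0$ in $H_1(\Sigma)$), so the entire content is carried by the per-generator diagonal framings $f_i$ and not by a bilinear form descending to homology.

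It then remains to identify $f_i$ with $(\,_\nu Q_a\, S_{g,b}\, _aQ_\nu)_{ii}\bmod 2$. The arc family $a$ is a basis of $H_1(\Sigma,\partial\Sigma)$ and the pairing $H_1(\Sigma)\times H_1(\Sigma,\partial\Sigma)\to\mathbb{Z}$ is unimodular, so a class of $L_\nu$ is recorded faithfully by its intersection numbers $_\nu Q_a$ with the arcs. Choosing $a$ adapted to a standard symplectic basis of $\Sigma$ turns the relevant part of the intersection form into the block matrix whose triangular truncation is exactly $S_{g,b}$, and a direct expansion gives $(\,_\nu Q_a\,S_{g,b}\,_aQ_\nu)_{ii}=-\sum_{m=1}^{g}\langle\nu_i,a_{2m}\rangle\langle\nu_i,a_{2m-1}\rangle$; this sum over handle pairs is precisely the self-framing of the capped curve $\nu_i$, so it equals $f_i$ up to a sign absorbed mod $2$. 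Finally I would verify the cocycle condition $c'\circ\zeta=0$ directly from $\zeta(x,y,z)=(x-z,y-x,z-y)$: an element of $C^Z_3=(L_\alpha\cap L_\beta)\oplus(L_\beta\cap L_\gamma)\oplus(L_\gamma\cap L_\alpha)$ contributes a curve lying in two families with one common $\Sigma$-class, so its framing contributions enter $\zeta$ in cancelling pairs and reduce to $0$ mod $2$.

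The main obstacle is the self-intersection computation underlying the second step: organizing the pushoff so that the per-generator framings $f_i$ are genuinely well defined from the diagram, that all cross terms are manifestly even, and that the surviving diagonal matches the triangular $S_{g,b}$ rather than the full antisymmetric intersection form of $\Sigma$. The conceptual heart of this last point is that passing from the bilinear intersection pairing to its quadratic diagonal mod $2$ is exactly the replacement of $\bigl(\begin{smallmatrix}0&1\\-1&0\end{smallmatrix}\bigr)$ by $\bigl(\begin{smallmatrix}0&0\\1&0\end{smallmatrix}\bigr)$; making this rigorous and checking independence from the handle-slide and capping choices is where the care is required. This mirrors the argument for Theorem \ref{theorem3} with $R^g_{p,b}$, and as a consistency check one could instead transport the Theorem \ref{theorem3} cocycle along the chain equivalence between $C^Y$ and $C^Z$ and confirm that it simplifies to the stated formula.
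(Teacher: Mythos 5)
Your route is genuinely different from the paper's, and it has two concrete gaps. The paper does not use the Wu-type characterization at all: it converts the $Z$-decomposition of Section 3 into a Kirby diagram in dotted-circle notation (dotted circles $\partial(a_i\times D^1)$, attaching circles $\alpha,\beta,\gamma$ with $\Sigma$-framings) and quotes the Gompf--Stipsicz theorem, which says \emph{at the cochain level} that the map sending each 2-handle to its framing coefficient mod 2 represents $w_2$; the framing coefficients are then computed by Lemma \ref{lemma4.3}. Your substitute --- ``$w_2$ is the unique class with $\langle w_2,\bar x\rangle\equiv\bar x\cdot\bar x$'' --- is only usable if you can evaluate on \emph{all} of $H_2(X;\mathbb{Z}_2)\cong(H_2(X)\otimes\mathbb{Z}_2)\oplus\mathrm{Tor}(H_1(X),\mathbb{Z}_2)$, but your verification only assembles surfaces from integral cycles of $C^Z$. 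When $H_1(X)$ has 2-torsion, evaluations on reductions of integral classes do not determine $w_2$ (the Enriques surface, with even intersection form but $w_2\neq 0$, is the standard witness), so as written you prove at most that $c'$ agrees with $w_2$ on the image of integral homology. Repairing this inside your framework means redoing the capping/pushoff computation for $\mathbb{Z}_2$-cycles with possibly non-orientable capped surfaces and mod-2 normal Euler numbers, which the proposal does not address; the Gompf--Stipsicz statement sidesteps the issue entirely.

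The second gap is the one you flag yourself but do not resolve: the per-generator framing $f_i$ is not defined by ``capping $\nu_i$ in $C_\nu$ and using the collar.'' The framing of $\nu_i$ induced by $\Sigma$ \emph{is} the framing of its compressing disk in $C_\nu$, so the self-framing measured against that cap is tautologically zero, whereas the integers in the theorem are generally nonzero. The quantity that actually enters is the self-linking $\lk(\nu_i,\nu_i^{+})$ of $\nu_i$ with its $\Sigma$-pushoff in $S^3=\partial(\text{0-handle})$ after erasing the dotted circles; this depends on the global position of $\Sigma\times D^1$ in $S^3$, i.e.\ on the whole 1-skeleton, not on $C_\nu$. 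The paper pins this down by splitting $\partial Z^1$ into two 1-handlebodies, replacing one of the two identical systems $\{\partial(a_i\times D^1)\}$ by $\{a^*_1\times\{1\},a^*_2\times\{-1\},\dots\}$, and applying Lemma \ref{lemma4.3} to get $j_{\nu_i}=-\sum_j({}_{\nu}Q_a\,S_{g,b})_{ij}\,[a_j]$ and hence $f_i=-\langle j_{\nu_i},\nu_i\rangle_\Sigma$; this is exactly where $S_{g,b}$ (your ``triangular truncation'') comes from. Your algebraic expansion $({}_{\nu}Q_a\,S_{g,b}\,{}_aQ_{\nu})_{ii}=-\sum_m\langle\nu_i,a_{2m}\rangle\langle\nu_i,a_{2m-1}\rangle$ is correct, but the geometric bridge identifying this with a framing is precisely the missing content, not a routine check. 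Note also that in the paper the cocycle condition $c'\circ\zeta=0$ is automatic (the Gompf--Stipsicz cochain is a cocycle), whereas your pairwise-cancellation sketch tacitly assumes the mod-2 framing function depends only on the class in $H_1(\Sigma)$ and not on which family's multicurve represents it --- yet another well-definedness claim your approach would have to prove and the paper's does not need.
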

\vspace{1ex}

\section{Relative trisection and handle decomposition}   
According to \cite{GK1}, non-relative trisections induce handle decompositions of a 4-manifold. Therefore, trisection is considered as a variant of handle decomposition. We can calculate the homology and the intersection form of a 4-manifold by using a trisection. In this section, we show that relative trisection has similar properties. 
\vspace{1ex}

First, we introduce a handle decomposition constructed from $X_1$.
\begin{lemma} \label{lemma3.1}
For a trisected 4-manifold $X$ with connected boundary, there is a decomposition $X = Y \cup V_2 \cup V_3$ such that:
\begin{enumerate}
\item[(1)] each $V_2, V_3$ is diffeomorphic to $\natural^l S^1 \times D^3$, and those intersection $V_2 \cap V_3$ is an empty set;
    \vspace{0.5ex}
\item[(2)] $Y \cap V_i$ is included in $\partial V_i$, and is diffeomorphic to $P \times I$;
    \vspace{0.5ex}
\item[(3)] $Y$ is a handlebody that consists of one 0-handle, $k_1$ 1-handles, $g-p$ 2-handles, $k_2 + k_3 - 2l$ 3-handles;
    \vspace{0.5ex}
\item[(4)] the curves $\gamma$ are the attaching circles of 2-handles for $Y$, and their framings are induced by $\Sigma$;
    \vspace{0.5ex}
\item[(5)] $Y^1 = X_1$.
\end{enumerate}
\end{lemma}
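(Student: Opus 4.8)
The plan is to adapt the Gay--Kirby correspondence between trisections and handle decompositions \cite{GK1} to the relative setting, building the decomposition outward from $X_1$ and isolating two product boundary pieces $V_2, V_3$ that play the role left empty by the absence of a top-dimensional handle in the closed case. Since $X_1 \cong \natural^{k_1} S^1 \times D^3$ by (iii), it carries the standard handle structure with a single $0$-handle and $k_1$ one-handles; taking this as the $0$- and $1$-handles of $Y$ immediately gives $Y^1 = X_1$, which is (5). It then remains to account for $X_2 \cup X_3$, attached along $(X_1 \cap X_2) \cup (X_1 \cap X_3) = C_{\beta} \cup C_{\alpha} \subset \partial X_1$, as a union of higher-index handles of $Y$ together with $V_2$ and $V_3$.

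Next I would isolate the boundary pieces. By (iv) each $X_i$ meets $\partial X$ in $P \times \partial^0 D \cup \partial P \times D$, so inside each $X_i$ ($i = 2,3$) there is a $4$-dimensional product region $V_i = P \times D$ over the third-disk $D$, whose boundary contains $X_i \cap \partial X$. Because $P$ is a genus $p$ surface with $b$ boundary components, it is homotopy equivalent to $\vee^{l} S^1$ with $l = 2p + b - 1 = \operatorname{rk} H_1(P)$, so $V_i = P \times D \cong \natural^{l} S^1 \times D^3$, establishing (1). The piece $V_i$ is glued to the rest of $X$ along its two radii $P \times (\partial^- D \cup \partial^+ D)$, which meet along $P \times \{\text{center}\}$ and hence together form a single copy of $P \times I \subset \partial V_i$, giving $Y \cap V_i \cong P \times I$ as in (2). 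Finally, since $V_i \subset X_i$ and $X_2 \cap X_3 = C_{\gamma}$ is only $3$-dimensional, $V_2$ and $V_3$ may be arranged to have disjoint interiors, so $V_2 \cap V_3 = \emptyset$.

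It then remains to show that $Y = X \setminus \Int(V_2 \cup V_3)$ is obtained from $X_1$ by attaching $g - p$ two-handles and $k_2 + k_3 - 2l$ three-handles. The two-handles come from the compression body $C_{\gamma} = X_2 \cap X_3$: thickening $C_{\gamma}$ inside $X_2 \cup X_3$ and comparing it with a collar of $\partial X_1$ exhibits each of its $g - p$ compressing disks, cut out along $\gamma$, as a $4$-dimensional two-handle attached to $\partial X_1$ along the corresponding curve of $\gamma \subset \Sigma$ with framing induced by $\Sigma$; this yields the two-handle count in (3) and statement (4). After these two-handles are attached, condition (v) identifies what remains of each $X_i$ ($i = 2,3$), away from $V_i$, with the filling of a sutured Heegaard splitting of $P \times \partial^{\pm} D \, \sharp \, (\sharp^{k_i - l} S^1 \times S^2)$, so the $k_i - l$ copies of $S^1 \times S^2$ are capped off by $k_i - l$ three-handles. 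In total this contributes $(k_2 - l) + (k_3 - l) = k_2 + k_3 - 2l$ three-handles, completing (3).

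The main obstacle is this third step: one must verify carefully, within the relative (sutured) framework of (v), that the compressions of $C_{\gamma}$ genuinely appear as two-handles with the surface framing, and that the leftover cobordism is exactly a union of three-handles, all while keeping the attaching regions disjoint from the product pieces $V_2, V_3$. This is precisely where the closed-case argument of \cite{GK1} must be upgraded, since the presence of $\partial X$ forces the splitting of $X_2 \cup X_3$ into a handle part, absorbed into $Y$, and the product parts $V_2, V_3$; the bookkeeping of which portion becomes two-handles, which becomes three-handles, and which becomes the $V_i$ is the crux of the proof.
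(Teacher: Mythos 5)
Your construction is the same as the paper's: take $Y^1 = X_1$, obtain the $g-p$ two-handles by thickening the $3$-dimensional $2$-handles of $C_{\gamma}$ along $\gamma$ inside a bicollar of $\Sigma$ (so the framing is the surface framing), and split off $V_i \cong \natural^l S^1 \times D^3$ carrying the boundary region $P \times \partial^0 D \cup \partial P \times D$, with $Y \cap V_i \cong P \times \partial^{\pm}D \cong P \times I$. All the counts in (1)--(5) match. However, the step you yourself flag as the crux --- that the remainder of each $X_i$ ($i=2,3$) contributes exactly $k_i - l$ three-handles --- is left unverified, and as stated it is not merely a formality: ``the $k_i - l$ copies of $S^1 \times S^2$ are capped off by $k_i - l$ three-handles'' cannot be literally right, since attaching $m$ three-handles to $\sharp^m S^1 \times S^2$ along the essential spheres leaves an $S^3$ boundary component, so the naive count appears to demand a $4$-handle that has no place in the lemma. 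You need to say where that $4$-handle goes.

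The paper closes this with a short dualization-and-cancellation argument you should supply. Split $X_i \cong \natural^{k_i} S^1 \times D^3$ as a boundary connected sum $(\natural^{k_i-l} S^1 \times D^3) \, \natural \, (\natural^l S^1 \times D^3)$, with the first summand pushed into $\Int X$ and the second summand being $V_i$, and regard the boundary sum as a connecting $1$-handle. The interior summand together with this connecting $1$-handle is a handlebody with one $0$-handle and $k_i - l + 1$ one-handles; turning it upside down, attaching it is the same as attaching $k_i - l + 1$ three-handles and one $4$-handle. The $3$-handle dual to the connecting $1$-handle and the $4$-handle form a canceling pair --- this is where your missing $4$-handle is absorbed --- so after cancellation each $X_i$ contributes exactly $k_i - l$ three-handles, giving the total $k_2 + k_3 - 2l$ in (3). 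With this paragraph inserted, your argument coincides with the paper's proof; the rest of your write-up (the identification $V_i = P \times D \cong \natural^l S^1 \times D^3$ via $l = 2p+b-1$, the $P \times I$ interface, and the disjointness of $V_2$ and $V_3$) is correct as it stands.
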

\begin{proof}
Since $X_1$ is diffeomorphic to $\natural^{k_1} S^1 \times D^3$, we regard it as $Y^1$. 

$X_2 \cap X_3$ is obtained by attaching 3-dimensional 2-handles to $X_1 \cap X_2 \cap X_3 \cong \Sigma$ along  $\gamma$ with framing induced by $\Sigma$. We get 4-dimensional 2-handles by thickening  them. In other words, for the 3-dimensional attaching region $S^1 \times D^1 \subset \Sigma$ and an embedded $\Sigma \times D^1$ in $(X_3 \cap X_1) \cup (X_1 \cap X_2)$ such that $\Sigma \times D^1 \cap (X_3 \cap X_1) = \Sigma \times [-1,0]$ and $\Sigma \times D^1 \cap (X_1 \cap X_2) = \Sigma \times [0,1]$, the 4-dimensional attaching region is $S^1 \times D^1 \times D^1$. A surgery derived from this attaching 2-handles changes $C_{\alpha} \cup C_{\beta}$ into $(C_{\alpha} \cup C_{\gamma}) \cup P \times D^1 \cup (C_{\gamma} \cup C_{\beta})$.
$X$ is obtained by attaching $\natural^{k_3} S^1 \times D^3$, $\natural^{k_2} S^1 \times D^3$ on it along $C_{\alpha} \cup C_{\gamma}$, $C_{\gamma} \cup C_{\beta}$ which are diffeomorphic to $P \times \partial^{\pm} D \, \sharp \, ( \sharp^{k_i-l} S^1 \times S^2)$.

We separate $\natural^{k_i} S^1 \times D^3$ into $\natural^{k_i-l} S^1 \times D^3$ included in $\Int X$ and $\natural^l S^1 \times D^3$. We regard the former as a 4-dimensional handlebody which consists of one 0-handle and $k_i -l$ 1-handles, and the boundary sum between the former and the latter as connecting them with a 1-handle. Considering the dual decomposition, attaching the former is the same as attaching $k_i-l+1$ 3-handles and one 4-handle. It is clear that the pair of a 3-handle of boundary sum and the 4-handle is a canceling pair. In this way, we obtain the handlebosy $Y$ satisfying (3), (4) and (5). Each $V_2$, $V_3$ should be $\natural^l S^1 \times D^3$ attached along $C_{\gamma} \cup C_{\beta}$, $C_{\alpha} \cup C_{\gamma}$.
\end{proof}

On the other hand, there is another handle decomposition constructed from $\Sigma \times D^2$.
\begin{lemma}
For a trisected 4-manifold $X$ with connected boundary, there is a decomposition $X = Z \cup W_1 \cup W_2 \cup W_3$ such that:
\begin{enumerate}
\item[(1)] $W_i$ is diffeomorphic to $\natural^l S^1 \times D^3$, and those intersection $W_i \cap W_j \,\, (i \neq j)$ is an empty set;
   \vspace{0.5ex}
\item[(2)] $Z \cap W_i$ is included in $\partial W_i$, and is diffeomorphic to $P \times I$;
   \vspace{0.5ex}
\item[(3)] $Z$ is a handlebody that consists of one 0-handle, $2g+b-1$ 1-handles, $3(g-p)$ 2-handles, $\sum_i k_i - 3l$ 3-handles;
   \vspace{0.5ex}
\item[(4)] the curves $\alpha$, $\beta$, $\gamma$ are the attaching circles of 2-handles for $Z$. Furthermore, their framings are induced by $\Sigma$;
   \vspace{0.5ex}
\item[(5)] $Z^1 \cong \Sigma \times D^2$.
\end{enumerate}
\end{lemma}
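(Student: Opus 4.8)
The plan is to mirror the proof of Lemma~\ref{lemma3.1}, but to start from a neighborhood of the central surface instead of from $X_1$, so that the three families $\alpha,\beta,\gamma$ play symmetric roles throughout. First I would establish (5) together with the $0$- and $1$-handle counts. Since $\Sigma$ has genus $g$ and $b\ge 1$ boundary components, it deformation retracts onto a wedge of $2g+b-1$ circles, hence admits a handle decomposition with a single $0$-handle, $2g+b-1$ $1$-handles, and no $2$-handle. Taking the product with $D^2$ thickens each handle to a $4$-dimensional handle of the same index, so $Z^1=\Sigma\times D^2$ is built from one $0$-handle and $2g+b-1$ $1$-handles. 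This yields (5) and the first two entries of (3).

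Next I would attach the $2$-handles and verify (4). Regard $\partial D^2$ as divided into three arcs by three points $p_{\alpha},p_{\beta},p_{\gamma}$, and for each $\nu\in\{\alpha,\beta,\gamma\}$ push a parallel copy of $\Sigma$ toward $p_{\nu}$ and attach the $g-p$ $2$-handles of the family $\nu$ along $\nu$ with the framing induced by $\Sigma$. Exactly as in Lemma~\ref{lemma3.1}, thickening the $3$-dimensional $2$-handles that convert $\Sigma$ into $\Sigma_{\nu}$ produces $4$-dimensional $2$-handles whose trace is the compression body $C_{\nu}$; the union of the three traces is a neighborhood of the spine $C_{\alpha}\cup C_{\beta}\cup C_{\gamma}$. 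There are $3(g-p)$ such handles, all $\Sigma$-framed, giving (4) and the third entry of (3).

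I would then fill in the complementary regions and establish (1), (2) and the $3$-handle count. Over the arc between $p_{i-1}$ and $p_i$ the boundary of the resulting $2$-skeleton is $(X_{i-1}\cap X_i)\cup(X_i\cap X_{i+1})$, which by condition (v) of the relative trisection is the sutured Heegaard splitting of $P\times\partial^{\pm}D\,\sharp\,(\sharp^{k_i-l}S^1\times S^2)$; filling it in recovers $X_i\cong\natural^{k_i}S^1\times D^3$ by condition (iii). I would then repeat verbatim the separation argument of Lemma~\ref{lemma3.1}: split each $X_i$ as a boundary sum of $\natural^{k_i-l}S^1\times D^3$, lying in $\Int X$ and filling the $\sharp^{k_i-l}S^1\times S^2$ summands, with $W_i=\natural^l S^1\times D^3$ filling the $P\times\partial^{\pm}D\cong P\times I$ part. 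Dualizing the interior piece together with the connecting $1$-handle gives $k_i-l+1$ $3$-handles and one $4$-handle, which cancels the $3$-handle of the boundary sum, leaving $k_i-l$ $3$-handles; summing over $i=1,2,3$ gives $\sum_i k_i-3l$ $3$-handles. The three leftover pieces $W_i\cong\natural^l S^1\times D^3$ are pairwise disjoint, giving (1), and each meets $Z$ along the $P\times\partial^{\pm}D\cong P\times I$ region, giving (2).

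The main obstacle is the geometric identification in the third step: verifying that attaching the three surface-framed families to $\Sigma\times D^2$ genuinely reconstructs a neighborhood of the spine, and that the three complementary boundary regions are precisely the sutured Heegaard splittings of condition (v). In particular I expect the delicate point to be checking that the $\sharp^{k_i-l}S^1\times S^2$ stabilizations are correctly encoded by the standard arrangement of the $g-p$ curves of each family (as in the standard diagram $(\Sigma;\delta^{k},\epsilon^{k})$ of Figure~1), so that no handles beyond the listed $3(g-p)$ are needed and the split into the interior pieces and the $W_i$ matches the $\sharp^{k_i-l}S^1\times S^2$ versus $P\times\partial^{\pm}D$ decomposition of each $X_i$.
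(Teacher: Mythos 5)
Your proposal is correct and follows essentially the same route as the paper: the paper's proof simply takes $Z^1 = \Sigma \times D^2 \cong \natural^{2g+b-1} S^1 \times D^3$ and invokes the standard reconstruction of a trisection from its diagram together with the splitting-and-cancellation argument of Lemma~\ref{lemma3.1}, which is exactly what you carry out. Your write-up just makes explicit the details (the three sectors of $\partial D^2$, the identification of the complementary regions via conditions (iii) and (v), and the $3$-handle count $\sum_i k_i - 3l$) that the paper leaves to the reader.
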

\begin{proof}
Since $\Sigma \times D^2$ is diffeomorphic to $\natural^{2g+b-1} S^1 \times D^3$, we regard it as $Z^1$. Considering how to reconstruct a trisection from a diagram, we can obtain the decomposition in a similar way to the proof of Lemma \ref{lemma3.1}.
\end{proof}
\begin{figure}[H]
\centering
\includegraphics[keepaspectratio,scale=0.7,pagebox=cropbox]{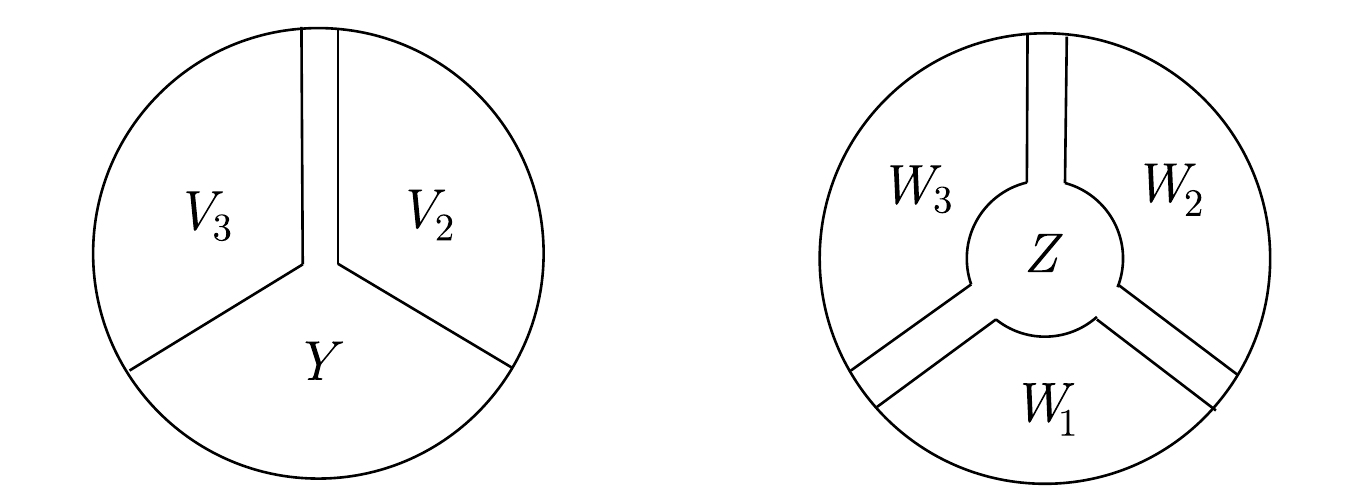}
\caption{Two decompositions of $X$: $Y \cup V_2 \cup V_3$ and $Z \cup W_1 \cup W_2 \cup W_3$. }
\end{figure}

According to the following lemma, $X$ is homotopy equivalent to each of $Y$ and $Z$. Therefore, the homology and the intersection form of $X$ can be calculated from the handlebodies $Y$ and $Z$.
\begin{lemma}
There are deformation retracts $r \colon (X, \partial X) \to (Y, \partial Y)$ and $r' \colon \\
(X, \partial X) \to (Z, \partial Z)$. Consequently, $H_i(X)$ is isomorphic to $H_i(Y)$ and $H_i(Z)$, and $H^*(X, \partial X)$ is isomorphic to each of $H^*(Y, \partial Y)$ and $H^*(Z, \partial Z)$ as rings.
\end{lemma}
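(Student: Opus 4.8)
The plan is to assemble the deformation retraction piece by piece, exploiting the product structure of the attached $\natural^l S^1 \times D^3$'s. By the construction in the proof of Lemma \ref{lemma3.1} we may identify each $V_i \cong \natural^l S^1 \times D^3$ with $P \times D$ so that the gluing region is $Y \cap V_i = P \times \partial^{\pm}D$, and likewise each $W_i \cong P \times D$ with $Z \cap W_i = P \times \partial^{\pm}D$. The key model fact is that the sector $D$ admits a strong deformation retraction $\phi_t$ onto the arc $\partial^{\pm}D$ fixing $\partial^{\pm}D$ pointwise. Then $\Phi^i_t := \mathrm{id}_P \times \phi_t$ is a strong deformation retraction of $V_i$ onto $P \times \partial^{\pm}D = Y \cap V_i$ that is the identity on $Y \cap V_i$. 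Since $V_2 \cap V_3 = \emptyset$, the maps $\Phi^2_t, \Phi^3_t$ and the identity on $Y$ glue to a strong deformation retraction $r_t \colon X \to X$ with $r_1(X) = Y$; the same recipe on the $W_i$ (using $W_i \cap W_j = \emptyset$) gives $r'_t$ retracting $X$ onto $Z$. In particular $X \simeq Y \simeq Z$, whence $H_i(X) \cong H_i(Y) \cong H_i(Z)$.

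Next I would verify that $r_1$ is a map of pairs. On $V_i$ one has $\partial X \cap V_i = \partial P \times D \cup P \times \partial^0 D$ by condition iv) of the definition of relative trisection, and $r_1 = \mathrm{id}_P \times \phi_1$ carries both $\partial P \times D$ and $P \times \partial^0 D$ into $P \times \partial^{\pm}D = Y \cap V_i \subseteq \partial Y$; combined with $r_1 = \mathrm{id}$ on $\partial X \cap Y \subseteq \partial Y$, this shows $r_1(\partial X) \subseteq \partial Y$, and symmetrically $r'_1(\partial X) \subseteq \partial Z$. Thus $r_1 \colon (X,\partial X) \to (Y,\partial Y)$ and $r'_1 \colon (X,\partial X) \to (Z,\partial Z)$ are maps of pairs.

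To obtain the isomorphism of relative cohomology rings I would compare the long exact sequences of the pairs through $r_1$. The map on total spaces is a homotopy equivalence, so by the five lemma it suffices to prove that $r_1|_{\partial X} \colon \partial X \to \partial Y$ is a homology isomorphism. Writing $\partial Y = B \cup A_2 \cup A_3$ and $\partial X = B \cup F_2 \cup F_3$ with $B = \partial X \cap \partial Y$, $A_i = Y \cap V_i$ and $F_i = \partial X \cap V_i$, the map $r_1$ is the identity on $B$, while on each $F_i$ it collapses the flap $\partial P \times D$ onto $\partial P \times \partial^{\pm}D$ and maps $P \times \partial^0 D$ onto $A_i$ without disturbing the $P$-factor; since $F_i \simeq P \simeq A_i$ and this map realizes that equivalence compatibly with the attaching to $B$, a Mayer--Vietoris comparison gives that $r_1|_{\partial X}$ is a homology isomorphism. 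As $r_1$ is a map of pairs, $r_1^*$ respects the relative cup product, so the resulting isomorphism $H^*(Y,\partial Y) \cong H^*(X,\partial X)$ is one of rings; the argument for $Z$ is identical.

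The main obstacle is exactly this last boundary comparison. One cannot simply restrict $r_t$ to $\partial X$: the homotopy $r_t$ does not preserve $\partial X$, and in fact no deformation retraction can keep $\partial X$ inside $\partial X$ throughout, since $\partial D \cong S^1$ admits no deformation retraction onto the arc $\partial^{\pm}D$. Hence $r_1|_{\partial X}$ must be analysed on its own, and the careful bookkeeping of how the faces $F_i$, $A_i$ and the seam $B$ fit together along the corners of $P \times D$ is the delicate point; everything else reduces to the product retraction $\mathrm{id}_P \times \phi_t$ together with the five lemma and naturality of the relative cup product.
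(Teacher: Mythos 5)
Your proposal is correct and takes essentially the same route as the paper: the paper likewise builds $r$ by gluing $\mathrm{id}_Y$ with deformation retractions $r_i \colon V_i \to Y \cap V_i$, justified there by the fact that $(Y \cap V_i) \cup \overline{\partial V_i \setminus (Y \cap V_i)}$ is the standard Heegaard splitting of $\partial V_i \cong \sharp^l S^1 \times S^2$, which is exactly the standardness needed for your product model $(P \times D,\, P \times \partial^{\pm}D)$. You in fact go further than the paper's very terse proof: your checks that $r_1$ is a map of pairs, that no homotopy can keep $\partial X$ inside $\partial X$ (since $\partial D \cong S^1$ does not retract onto an arc), and that $r_1|_{\partial X}$ is a homology isomorphism via the Mayer--Vietoris comparison of $B \cup F_2 \cup F_3$ with $B \cup A_2 \cup A_3$ supply precisely the bookkeeping the paper leaves implicit.
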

\begin{proof}
Since $V_i \cong \natural^l S^1 \times D^3$ and $(Y \cap V_i ) \, \cup \, \overline{\partial V_i \setminus (Y \cap V_i)}$ is a standard Heegaard splitting of $\partial V_i \cong \sharp^l S^1 \times S^2$, we can construct deformation retracts $r_i \colon V_i \to Y \cap V_i$. Using this $r_i$ and $id_Y$, we obtain the desired map $r$. The construction of $r'$ is the same.
\end{proof}
\begin{figure}[H]
\centering
\includegraphics[scale=0.8,pagebox=cropbox]{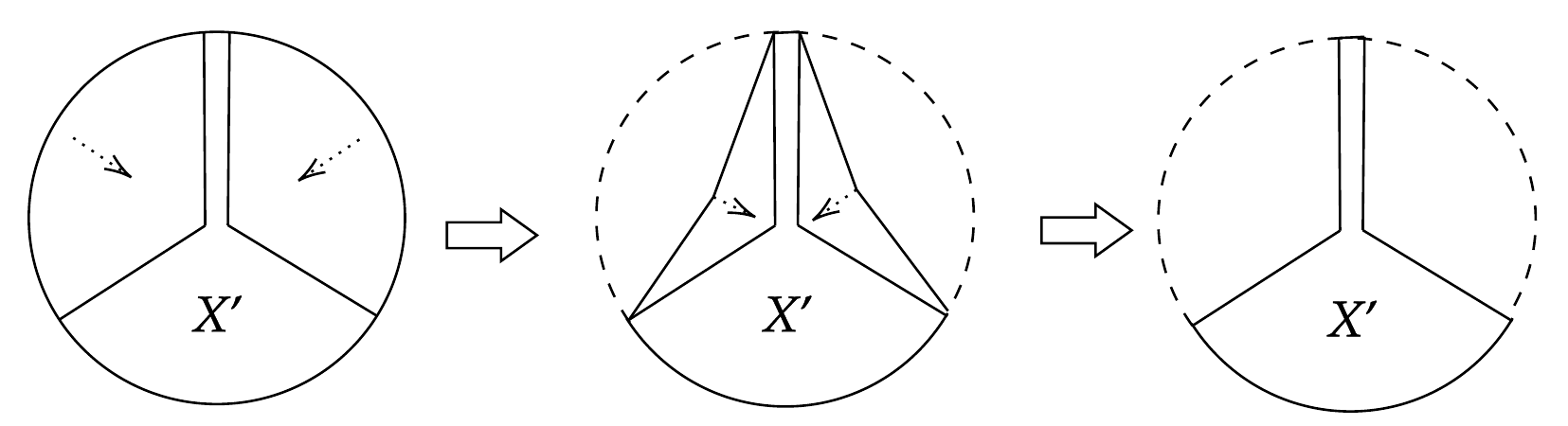}
\caption{$r \colon (X , \partial X) \to (Y , \partial Y)$.}
\end{figure}

\section{Facts of 3-dimensional topology} 
To consider the homology and the intersection of $Y$ and $Z$, we review topology of 3-manifolds. We modify some lemmas in \cite{FKSZ1} to obtain the lemmas in this section, which can be applied to sutured Heegaard splitting. We omit the proofs of several lemmas in this paper since they are similar to those of \cite{FKSZ1}.
\begin{lemma} \label{lemma4.1}
Let $C_{\nu}$ be a relative compression body with surface $\Sigma$. Then:
\begin{enumerate}
\item[(1)] $\{ [\nu_1], \dots , [\nu_{g-p}] \}$ forms a basis of $L_{\nu}$;
   \vspace{0.5ex}
\item[(2)] there is a family of arcs $a$ such that $\{ [\nu_1], \dots , [\nu_{g-p}] , [a_1] ; \dots , [a_l] \}$ forms a basis of $L_{\nu}^{\partial}$,
   \vspace{0.5ex}
\item[(3)] $\{\, x \in H_1(\Sigma) \mid \forall y \in L_{\nu}^{\partial} \, , \, \langle y , x \rangle_{\Sigma} = 0 \,\} = L_{\nu}$.
\end{enumerate}
\end{lemma}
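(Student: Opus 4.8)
The plan is to read off all three statements from the homotopy type of the compression body and from Lefschetz duality on $\Sigma$, descending to an explicit model only where genuinely needed. By Definition \ref{def:diagram}\,iii) a diffeomorphism together with handle slides within $\nu$ brings $(\Sigma;\nu)$ to the standard family of Figure 1; since such moves act on $\{[\nu_i]\}$ by an invertible integral change of basis and preserve $L_\nu$, $L^\partial_\nu$ and the form $\langle\,\cdot\,,\cdot\,\rangle_\Sigma$, any assertion of the shape ``these classes form a basis'' may be verified in the standard model. Throughout I write $\partial_-C_\nu=\overline{\partial C_\nu\setminus\Sigma}$, so that $\partial_-C_\nu$ is the genus $p$ surface $P$ together with the vertical annuli $\partial\Sigma\times I$ and hence deformation retracts onto $P$. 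For (1) I would use that $C_\nu$ is obtained from $\Sigma\times I$ by attaching $g-p$ two-handles along $\nu_i\times\{1\}$, so up to homotopy it is $\Sigma$ with $2$-cells coned off along the $\nu_i$; thus $\iota_\nu$ is the projection $H_1(\Sigma)\to H_1(\Sigma)/\langle[\nu_1],\dots,[\nu_{g-p}]\rangle$ and $L_\nu=\langle[\nu_1],\dots,[\nu_{g-p}]\rangle$. In the standard model the $\nu_i$ are the meridians of the first $g-p$ handles, hence part of a geometric symplectic basis, so they are independent and span a direct summand, and the Euler characteristic count $\operatorname{rank}H_1(C_\nu)=g+p+b-1$ confirms that $\{[\nu_i]\}$ is a basis of the rank $g-p$ group $L_\nu$.

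For (2) I would first record that $[\nu_i]\in L^\partial_\nu$, because $\nu_i$ already dies in $H_1(C_\nu)$ and therefore in the relative group $H_1(C_\nu,\partial_-C_\nu)$. To produce the arcs I would use the product subregion $P'\times I\subset C_\nu$ joining $\partial_-C_\nu$ to the surviving genus $p$, $b$-boundary subsurface $P'\subset\Sigma$ disjoint from the $\nu_i$: any arc carried by $P'$ can be isotoped into $\partial_-C_\nu$, so its class lies in $L^\partial_\nu$, and arcs dual to a basis of $H_1(P,\partial P)\cong\mathbb Z^{\,l}$ supply $l$ classes $[a_1],\dots,[a_l]$. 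Independence of $\{[\nu_i],[a_j]\}$ is transparent in the standard model, where the $\nu_i$ and the $a_j$ have disjoint supports. I would then compute $\operatorname{rank}L^\partial_\nu$ from the long exact sequence of $(C_\nu,\partial_-C_\nu)$: since $\partial_-C_\nu\simeq P$ includes with image of rank $l$ in $H_1(C_\nu)$ one gets $\operatorname{rank}H_1(C_\nu,\partial_-C_\nu)=g-p$, while $\iota^\partial_\nu$ is onto because the longitudes dual to the $\nu_i$ already lie on $\Sigma$ and generate $H_1(C_\nu,\partial_-C_\nu)$; hence $\operatorname{rank}L^\partial_\nu=(2g+b-1)-(g-p)=(g-p)+l$, which matches the count of the listed classes and forces a basis.

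For (3) I would show that $L_\nu$ and $L^\partial_\nu$ are mutual annihilators under the perfect Lefschetz pairing $\langle\,\cdot\,,\cdot\,\rangle_\Sigma\colon H_1(\Sigma,\partial\Sigma)\times H_1(\Sigma)\to\mathbb Z$, a relative version of \emph{half lives, half dies}. The inclusion $L_\nu\subseteq(L^\partial_\nu)^\perp$ is geometric: each $\nu_i$ bounds the core disk $D_i$ of a two-handle with $D_i\cap\partial_-C_\nu=\varnothing$, while any $y\in L^\partial_\nu$ cobounds a properly embedded surface $F$ in $(C_\nu,\partial_-C_\nu)$; making $D_i$ and $F$ transverse, the $1$-manifold $D_i\cap F$ has boundary only the points of $\nu_i\cap y$ on $\Sigma$, whose signed count is therefore $0$, so $\langle y,[\nu_i]\rangle_\Sigma=0$. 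Equality then follows from the rank tally of (2): $(L^\partial_\nu)^\perp$ is saturated of rank $(2g+b-1)-\operatorname{rank}L^\partial_\nu=g-p=\operatorname{rank}L_\nu$, and $L_\nu$ is itself a direct summand, so the containment of equal-rank summands is forced to be an equality.

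The main obstacle is the relative duality underlying (3) together with the rank computation it depends on. One must handle the sutured corner $\partial\Sigma=\Sigma\cap\partial_-C_\nu$ carefully so that the pairing on $\Sigma$ is genuinely perfect and so that the intersection $D_i\cap F$ is controlled along all of $\partial C_\nu$, not merely along $\Sigma$; and one must pin down $\operatorname{rank}H_1(C_\nu,\partial_-C_\nu)=g-p$ together with the surjectivity of $\iota^\partial_\nu$, since it is exactly this number that converts the annihilator inclusion into the equality of (3) and fixes the arc count $l$ in (2). Once these relative-homology inputs are secured, (1) and (2) reduce to linear algebra in the standard model of Figure 1.
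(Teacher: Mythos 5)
Your route is essentially correct but genuinely different from the paper's in parts (2) and (3), and the comparison is worth recording. For (1) the two arguments coincide homologically: the paper runs the exact sequence of the pair $(C_\nu,\Sigma)$, using $H_2(C_\nu)=0$ and the core disks as a basis of $H_2(C_\nu,\Sigma)$ mapping onto the attaching circles, which is exactly your coned-off-2-cells picture. For (2) the paper never counts ranks: it first shows $H_1(C_\nu,\partial\Sigma)$ is free via the sequence of $(C_\nu,\partial\Sigma)$, then combines the exact sequences of the triples $(C_\nu,\Sigma,\partial\Sigma)$ and $(C_\nu,\overline{\partial C_\nu\setminus\Sigma},\partial\Sigma)$ into a diagram producing a splitting $H_1(\Sigma,\partial\Sigma)\simeq\im\partial\oplus H_1(C_\nu,\partial\Sigma)$ with $\iota^\partial_\nu(x,y)=\theta(y)$, whence $\ker\iota^\partial_\nu=\im\partial\oplus\ker\theta$ with the arcs $a$ giving a basis of $\ker\theta\cong H_1(\overline{\partial C_\nu\setminus\Sigma},\partial\Sigma)$; generation comes for free from the splitting, with no primitivity check needed. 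For (3) the paper is coordinate-driven (split $\Sigma=\Sigma_{g-p}\sharp\Sigma_\nu$, build duals $\nu^*,a^*$, read off $\langle\nu_i,x\rangle_\Sigma=x_{i+g-p}$, $\langle a_i,x\rangle_\Sigma=x_{i+2(g-p)}$), whereas your unimodular-pairing/saturation argument is more conceptual and equally valid: annihilators under the perfect Lefschetz pairing are saturated, so containment of equal-rank summands forces equality, and your bounding-surface computation for $L_\nu\subseteq(L^\partial_\nu)^\perp$ is the half-lives-half-dies argument the paper in effect delegates to Lemma \ref{lemma4.1}(3) via coordinates (and reproves in the guise of Lemma \ref{lemma4.2}(2)). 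Your approach buys generality and brevity; the paper's buys explicit bases, which it then reuses in Corollary \ref{corollary5.1} and in the Stiefel--Whitney computations.

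One step fails as literally written, in (2): ``which matches the count of the listed classes and forces a basis.'' Over $\mathbb{Z}$, $(g-p)+l$ independent elements in a group of rank $(g-p)+l$ need not generate it (the index can exceed $1$), and the justification offered for independence --- that the $\nu_i$ and $a_j$ have disjoint supports --- gives not even independence, since disjoint curves can be homologous. What you need is that the span of $\{[\nu_i],[a_j]\}$ is a direct summand of $H_1(\Sigma,\partial\Sigma)$: in the standard model the dual classes $[\nu_i^*]$, $[a_j^*]$ pair with them by an identity intersection matrix, so the span is primitive; then your rank count shows $L^\partial_\nu$ modulo this span is torsion, while it is also torsion-free as a subgroup of the torsion-free quotient by a summand, hence zero. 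This is exactly the saturation device you correctly deploy in part (3), so the patch is immediate --- but without it the basis claim in (2), and hence the rank input to (3), is unproven. (Secondarily, the existence of the properly embedded surface $F$ bounded by $y\in L^\partial_\nu$ deserves a word, or can be avoided by pairing $\iota^\partial_\nu y=0$ against the relative class $[D_i]\in H_2(C_\nu,\Sigma)$, which is how the paper's Lemma \ref{lemma4.2}(2) phrases the same computation.)
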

\begin{proof}
We consider the exact sequence
{\small \begin{figure}[H]
\vspace{-1ex} \begin{tikzcd}
 H_2(C_{\nu})  \ar{r} &  H_2(C_{\nu} , \Sigma)  \ar{r}{\partial} & H_1(\Sigma) \ar{r}{\iota_{\nu}} & H_1(C_{\nu})
\end{tikzcd}
\end{figure}} \vspace{-1.5ex}
\noindent
of $(C_{\nu}, \Sigma)$ to show Claim (1).
$H_2(C_{\nu}) = 0$ since $C_{\nu}$ is diffeomorphic to $\natural^{g+p+b-1} S^1 \times D^2$. The core disks of 2-handles form a basis of $H_2(C_{\nu}, \Sigma)$, and the image of each core disk is its attaching circle. These prove Claim (1) because of the exactness.
\vspace{1ex}

To consider exact sequences of triples $(C_{\nu}, \Sigma, \partial \Sigma)$ and $(C_{\nu}, \overline{\partial C_{\nu} \setminus \Sigma}, \partial \Sigma)$,  we deal with the sequence of $(C_{\nu}, \partial \Sigma)$:
\vspace{-1ex} {\small \begin{figure}[H]
\begin{tikzcd}
0 \ar{r} &[-1.5em] H_2(C_{\nu} , \partial \Sigma) \ar{r} &[-1.5em] H_1(\partial \Sigma) \ar{r}{\sigma_1} &[-1.2em] H_1(C_{\nu})  \ar{r} &[-1.5em] H_1(C_{\nu} , \partial \Sigma) \ar{r} &[-1.5em] H_0(\partial \Sigma) \ar{r}{\sigma_0} &[-1.2em] H_0(C_{\nu}).
\end{tikzcd}
\end{figure}} \vspace{-2ex} \noindent
Let $c_0, \dots , c_{b-1}$ be boundary components of $\partial \Sigma$, and then the homomorphism $\sigma_1$ satisfies $\sigma_1([c_0]) = -[c_1]-\cdots-[c_{b-1}]$, $\sigma_1([c_i]) = [c_i] \, \, (i = 1, \dots , b-1)$. Since $\ker \sigma_1$ is the free submodule which has one generator $[c_0]+\cdots+[c_{b-1}]$, $[\Sigma]$ forms a basis of $H_2(C_{\nu}, \partial \Sigma)$. Each $c_i \subset \natural^{g-p+b-1} S^1 \times D^2$ can be regarded as $S^1 \times \{*_i\}$, and those homology classes form a basis of $\im \sigma_1$. Therefore, $\coker \sigma_1$ is a free module. It is easy to see that $\ker \sigma_0$ is free. These prove that $H_1(C_{\nu}, \partial \Sigma)$ is also free.

Since the image of $[\Sigma]$ by the homomorphism $H_2(C_{\nu}, \partial \Sigma) \to H_2(C_{\nu}, \Sigma)$ is zero, we obtain the following diagram:
{\small \begin{figure}[H]
\centering
\begin{tikzcd}
 0 \ar{r} &[-2ex] H_2(C_{\nu} , \Sigma) \ar{r}{\partial} &[-1ex] H_1(\Sigma , \partial   \Sigma) \ar{d}[swap]{\eta} \ar{rd}{\iota^{\partial}_{\nu}} \ar{r}{\eta} &[-1ex] H_1(C_{\nu} , \partial \Sigma) \ar{d}{\theta} \ar{r} &[-2ex] 0 \\
 0 \ar{r} & H_1(\overline{\partial C_{\nu} \setminus \Sigma} , \partial \Sigma) \ar{r} & H_1(C_{\nu} , \partial \Sigma) \ar{r}[swap]{\theta} & H_1(C_{\nu} , \overline{\partial C_{\nu} \setminus \Sigma}) \ar{r} & 0.
\end{tikzcd}
\end{figure}} \vspace{-2ex}
\noindent
The upper row is the exact sequence of $(C_{\nu}, \Sigma, \partial \Sigma)$, and the lower row is the exact sequence of $(C_{\nu}, \overline{\partial C_{\nu} \setminus \Sigma}, \partial \Sigma)$. The upper sequence splits such that $H_1(\Sigma, \partial \Sigma) \simeq \im \partial \oplus H_1(C_{\nu}, \partial \Sigma)$, and $\iota^{\partial}_{\nu}(x,y) = \theta(y)$. Let $a$ be a family of $l$ proper arcs in $\Sigma \setminus \nu$ such that the surgery along $a$ changes $\Sigma_{\nu}$ into a disk. $\{ [a_1], \dots , [a_l] \}$ forms a basis of $H_1(\overline{\partial C_{\nu} \setminus \Sigma}, \partial \Sigma)$. Since we can regard $a \subset \Sigma$, so $\ker \iota^{\partial}_{\nu} = \im \partial \oplus \ker \theta$. This proves Claim (2).
\vspace{1ex}

We separate $\Sigma$ into $\Sigma_{g-p} \sharp \Sigma_{\nu}$ such that $\nu \subset \Sigma_{g-p}$ and $a \subset \Sigma_{\nu}$. Let $\nu^*$ be a family of $g-p$ curves in $\Sigma_{g-p}$ such that $\{ [\nu], [\nu^*] \}$ is a symplectic basis of $H_1(\Sigma_{g-p})$ regarding the intersection form of $\Sigma_{g-p}$. Let $a^*$ be a family of $l$ curves in $\Sigma_{\nu}$ as the following diagram. 
\begin{figure}[H]
\centering
\includegraphics[keepaspectratio,scale=0.9,pagebox=cropbox]{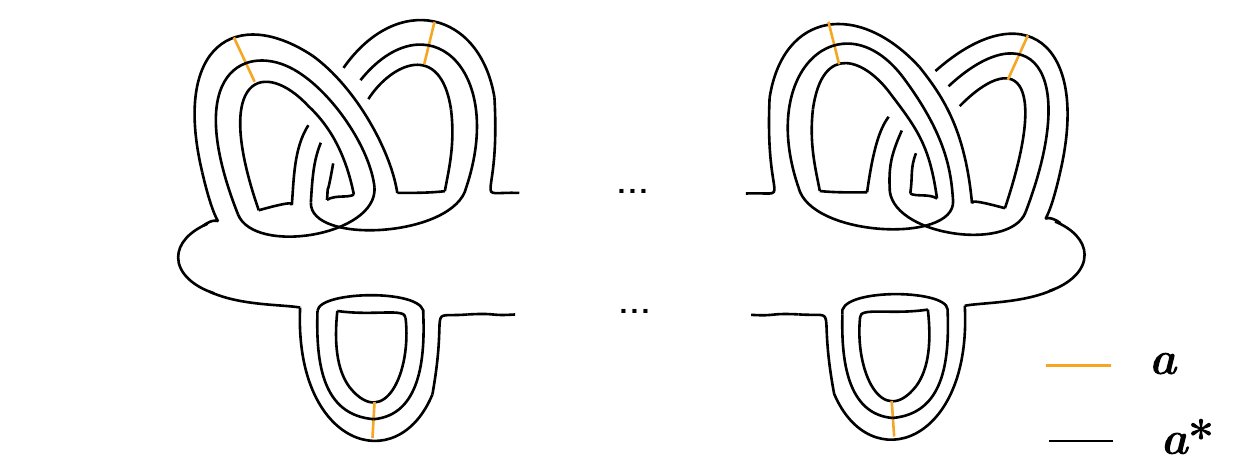}
\caption{The curves $a$ and $a^*$.}
\end{figure} \noindent
$\{[a^*]\}$ is a basis of $H_1(\Sigma_{\nu})$, and then $\{[\nu], [\nu^*], [a^*]\}$ is a basis of $H_1(\Sigma)$. For $x = (x_1, \dots ,x_{2g+b-1}) \in H_1(\Sigma)$, we have
\[ \langle \nu_i, x \rangle_{\Sigma} = x_{i+g-p}, \, \, \langle a_i, x \rangle_{\Sigma} = x_{i+2(g-p)}. \]
(1), (2) and this show that the left-hand side of (3) is $L_{\nu}$.
\end{proof}
Let $(L_{\nu}^{\partial})^{\bot}$ be the left-hand side of Lemma \ref{lemma4.1} (3), then $(L_{\nu}^{\partial})^{\bot} = L_{\nu}$ according to (3). We can regard $L_{\nu}$ as a submodule in $L_{\nu}^{\partial}$. 
\vspace{1ex}

Let $(\Sigma; C_{\mu}, -C_{\nu})$ be a sutured Heegaard splitting of a 3-manifold $M$. We define $\iota_M$ and $\iota_M^{\partial}$ in the same way as $\iota_{\nu}$ and $\iota_{\nu}^{\partial}$. We consider the Mayer-Vietoris sequences of $(\Sigma ; C_{\mu} , -C_{\nu})$ and $\left( (\Sigma , \partial \Sigma)\, ; (C_{\mu} , \overline{\partial C_{\mu} \setminus \Sigma}) \, , -(C_{\nu} , \overline{\partial C_{\nu} \setminus \Sigma}) \right)$. Suppose $Q \subset \Int M$ is an oriented embedded surface which is transverse to $\Sigma$ and represents  $x \in H_2(M)$. The image of the boundary homomorphism $\partial x$ is represented by the family of curves $Q \cap \Sigma$. We orient them as the boundary of the oriented surface $Q \cap C_{\mu}$. Note that the orientation induced from $Q \cap C_{\nu}$ is the opposite. The boundary homomorphism of $H_2(M, \partial M)$ is also similar.
\begin{lemma} \label{lemma4.2}
Let $\partial \colon H_2(M) \to H_1(\Sigma)$ and $\partial' \colon H_2(M, \partial M) \to H_1(\Sigma, \partial \Sigma)$ be the boundary homomorphisms coming from the Mayer-Vietoris sequences for the triple $(\Sigma ; C_{\mu} , -C_{\nu})$ and $\left( (\Sigma , \partial \Sigma)\, ; (C_{\mu} , \overline{\partial C_{\mu} \setminus \Sigma}) \, , -(C_{\nu} , \overline{\partial C_{\nu} \setminus \Sigma}) \right)$, respectively. Then:
\begin{enumerate}
\item[(1)] the maps $\partial$ and $\partial'$ are isomorphisms between $H_2(M)$ and $L_{\mu} \cap L_{\nu}$, and $H_2(M, \partial M)$ and $L_{\mu}^{\partial} \cap L_{\nu}^{\partial}$, respectively;
   \vspace{0.5ex}
\item[(2)] for any $z \in H_2(M, \partial M)$ and $x \in H_1(\Sigma)$, $\langle z , \iota_M x \rangle_M = \langle \partial' z , x \rangle_{\Sigma}$;
   \vspace{0.5ex}
\item[(3)] if $H_1(M, \partial M)$ is torsion free and $x \in H_1(\Sigma)$, then $x \in \ker \iota_M$ if and only if $x \in (L_{\mu}^{\partial} \cap L_{\nu}^{\partial})^{\bot}$.
\end{enumerate}
\end{lemma}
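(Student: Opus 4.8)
The plan is to run the two Mayer--Vietoris sequences in parallel to get (1), to prove the pairing identity (2) by a geometric intersection-localization argument, and then to convert (2) into the orthogonality statement (3) by Lefschetz duality together with the torsion-free hypothesis.

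For (1), I would write down the Mayer--Vietoris sequence of the absolute triple $(\Sigma; C_\mu, -C_\nu)$,
\[ H_2(C_\mu) \oplus H_2(C_\nu) \longrightarrow H_2(M) \stackrel{\partial}{\longrightarrow} H_1(\Sigma) \stackrel{(\iota_\mu, -\iota_\nu)}{\longrightarrow} H_1(C_\mu) \oplus H_1(C_\nu). \]
Since each compression body is diffeomorphic to $\natural^{g+p+b-1} S^1 \times D^2$, the proof of Lemma~\ref{lemma4.1}(1) already gives $H_2(C_\mu) = H_2(C_\nu) = 0$, so $\partial$ is injective, and by exactness its image is $\ker(\iota_\mu, -\iota_\nu) = L_\mu \cap L_\nu$. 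For the relative triple I would run the identical argument; the only new input is the vanishing of $H_2(C_\nu , \overline{\partial C_\nu \setminus \Sigma})$, which I would obtain from Lefschetz duality $H_2(C_\nu , \overline{\partial C_\nu \setminus \Sigma}) \cong H^1(C_\nu , \Sigma)$ and the fact that $H_1(C_\nu , \Sigma) = 0$ (this follows from the compression-body sequence of Lemma~\ref{lemma4.1}, since $\iota_\nu$ is surjective and $H_0(\Sigma) \to H_0(C_\nu)$ is an isomorphism). With these vanishings, $\partial'$ is injective with image $\ker(\iota^\partial_\mu, -\iota^\partial_\nu) = L^\partial_\mu \cap L^\partial_\nu$.

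For (2) I would argue geometrically. Represent $z$ by a properly embedded oriented surface $Q$ transverse to $\Sigma$, so that, with the orientation convention fixed in the paragraph preceding the lemma, $\partial' z$ is represented by the oriented family $Q \cap \Sigma$. Represent $x$ by an oriented closed curve on $\Sigma$ and push it slightly into $\Int C_\mu$ to obtain a representative $\gamma$ of $\iota_M x$ disjoint from $\Sigma$. Each intersection point of $Q$ with $\gamma$ then lies in a collar of $\Sigma$ inside $C_\mu$ and corresponds bijectively, with matching sign, to an intersection point of $x$ with $Q \cap \Sigma$ on $\Sigma$; summing signs yields $\langle z , \iota_M x \rangle_M = \langle \partial' z , x \rangle_\Sigma$. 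The delicate point here is the sign bookkeeping: I must check that the chosen pushoff direction into $C_\mu$ and the convention orienting $Q \cap \Sigma$ as the boundary of $Q \cap C_\mu$ are mutually compatible, so that no spurious global sign appears.

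Finally, for (3) I would combine (1) and (2) with duality. By (2) and the surjectivity of $\partial'$ onto $L^\partial_\mu \cap L^\partial_\nu$ from (1), the condition $\langle z , \iota_M x \rangle_M = 0$ for all $z \in H_2(M, \partial M)$ is equivalent to $\langle w , x \rangle_\Sigma = 0$ for all $w \in L^\partial_\mu \cap L^\partial_\nu$, i.e.\ to $x \in (L^\partial_\mu \cap L^\partial_\nu)^\bot$. On the other hand, the Lefschetz pairing $H_2(M, \partial M) \otimes H_1(M) \to \mathbb{Z}$ corresponds under $H_2(M,\partial M) \cong H^1(M) \cong \Hom(H_1(M), \mathbb{Z})$ to the evaluation pairing, whose right radical is exactly the torsion subgroup of $H_1(M)$; hence $\langle z , \iota_M x \rangle_M = 0$ for all $z$ is equivalent to $\iota_M x$ being torsion. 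The hypothesis is that $H_1(M, \partial M)$ is torsion free, and via $H_1(M, \partial M) \cong H^2(M)$ together with the universal-coefficient splitting $H^2(M) \cong \Hom(H_2(M), \mathbb{Z}) \oplus \mathrm{Ext}^1(H_1(M), \mathbb{Z})$ this forces $\mathrm{Ext}^1(H_1(M), \mathbb{Z}) \cong \mathrm{Tors}\, H_1(M) = 0$. Therefore $\iota_M x$ torsion reduces to $\iota_M x = 0$, i.e.\ $x \in \ker \iota_M$, closing the chain of equivalences. I expect the main obstacle to be the sign bookkeeping in (2), with a secondary subtlety in (3) being the correct translation of the torsion-free hypothesis on $H_1(M,\partial M)$ into the vanishing of the torsion radical of the intersection pairing.
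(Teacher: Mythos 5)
Your proof is correct and follows essentially the same route as the paper, which simply defers to the proof of Lemma 2.2 of \cite{FKSZ1}: Mayer--Vietoris in the absolute and relative settings for (1), the collar-pushoff intersection count with the paper's boundary-orientation convention for (2), and duality combined with the torsion-free hypothesis for (3). The details you supply for the sutured adaptation --- the vanishing $H_2(C_\nu , \overline{\partial C_\nu \setminus \Sigma}) = 0$ via Lefschetz duality for the triad, and the translation of torsion-freeness of $H_1(M,\partial M)$ into torsion-freeness of $H_1(M)$ through $H_1(M,\partial M) \cong H^2(M)$ and universal coefficients --- are exactly the points the paper leaves implicit, and they check out.
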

\begin{proof}
The proof of this lemma is similar to the proof of Lemma 2.2 \cite{FKSZ1}.
\end{proof}
\vspace{1ex}

The intersection form of a 4-manifold with handle decomposition can be calculated by using a linking matrix of the framed link obtained from attaching circles of the 2-handles. We can also define the linking number in the case of 3-manifolds with boundaries by using Seifert surfaces. In regard to a relationship between the linking number and the intersection on $\Sigma$, we obtain the following lemma.
\begin{lemma} \label{lemma4.3}
Suppose $J$ and $K$ are families of embedded oriented curves in $\Int \Sigma$ such that $\iota_M ([J]) =\iota_M ([K]) = 0$,$J \cap K = \emptyset$. Then we have:
\begin{enumerate}
\item[(1)] there is $j \in L_{\mu}$ such that $\langle y , j \rangle_{\Sigma} = \langle y , [J] \rangle_{\Sigma}$ for all $y \in L_{\nu}^{\partial}$;
   \vspace{0.5ex}
\item[(2)] for $j \in L_{\mu}$ satisfying Claim (1), $\lk(J,K)_M = \langle j, [K] \rangle_{\Sigma}$.
\end{enumerate}
\end{lemma}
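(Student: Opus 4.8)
The plan is to treat the two claims as a single geometric package: for (1) I produce an element $j\in L_{\mu}$ by a homological decomposition of $[J]$, and for (2) I build an explicit Seifert surface for $J$ out of that same $j$ and read off the linking number as an intersection number on $\Sigma$. The first step is a reformulation of (1). Finding $j\in L_{\mu}$ with $\langle y,j\rangle_{\Sigma}=\langle y,[J]\rangle_{\Sigma}$ for all $y\in L_{\nu}^{\partial}$ is equivalent to asking that $[J]-j$ lie in the annihilator $\{x\in H_1(\Sigma)\mid \forall y\in L_{\nu}^{\partial},\ \langle y,x\rangle_{\Sigma}=0\}$, which by Lemma~\ref{lemma4.1}(3) is exactly $L_{\nu}$. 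Hence (1) is equivalent to the single assertion $[J]\in L_{\mu}+L_{\nu}$: given a decomposition $[J]=j+w$ with $j\in L_{\mu}$ and $w\in L_{\nu}$, the element $j$ works, because $w\in L_{\nu}=(L_{\nu}^{\partial})^{\perp}$ pairs trivially with every $y\in L_{\nu}^{\partial}$.

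To obtain $[J]\in L_{\mu}+L_{\nu}$ I would use the hypothesis $\iota_M[J]=0$ geometrically. Since $[J]=0$ in $H_1(M)$, I choose an embedded oriented surface $F\subset\Int M$ with $\partial F=J$ and make it transverse to $\Sigma$. Cutting $F$ along $\Sigma$ splits it as $F_{\mu}=F\cap C_{\mu}$ and $F_{\nu}=F\cap C_{\nu}$, glued along the family of interior intersection curves $c=(\Int F)\cap\Sigma$. Each $\partial F_{\mu}$ is a $1$-cycle on $\Sigma$ bounding the surface $F_{\mu}$ inside $C_{\mu}$, so $\iota_{\mu}[\partial F_{\mu}]=0$ and $[\partial F_{\mu}]\in L_{\mu}$; likewise $[\partial F_{\nu}]\in L_{\nu}$. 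The curves $c$ appear in $\partial F_{\mu}$ and $\partial F_{\nu}$ with opposite orientations and cancel, leaving $[\partial F_{\mu}]+[\partial F_{\nu}]=[\partial F]=[J]$ in $H_1(\Sigma)$, whence $[J]\in L_{\mu}+L_{\nu}$. (One could instead deduce the same from Lemma~\ref{lemma4.2}(3), which identifies $\ker\iota_M$ with $(L_{\mu}^{\partial}\cap L_{\nu}^{\partial})^{\perp}$, together with a perp computation; I prefer the surface version because the surface is reused below.) Keeping $J$ and $F$ in the interior is exactly what lets me ignore the sutured part $\overline{\partial C_{\nu}\setminus\Sigma}$.

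For (2), fix any $j\in L_{\mu}$ as in (1), so $j-[J]\in L_{\nu}$. Using the bases $\{[\mu_i]\}$ of $L_{\mu}$ and $\{[\nu_k]\}$ of $L_{\nu}$ from Lemma~\ref{lemma4.1}(1), write $j=\sum_i c_i[\mu_i]$ and $j-[J]=\sum_k d_k[\nu_k]$, and let $D_i\subset C_{\mu}$, $E_k\subset C_{\nu}$ be the compressing disks bounding $\mu_i,\nu_k$. Set $\bar F_{\mu}=\sum_i c_iD_i$ and $F_{\nu}=-\sum_k d_kE_k$; their boundaries on $\Sigma$ represent $j$ and $[J]-j$, so $J-\partial\bar F_{\mu}-\partial F_{\nu}$ is null-homologous in $\Sigma$ and bounds a $2$-chain $G\subset\Sigma$. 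Then $F=\bar F_{\mu}+F_{\nu}+G$ is a Seifert surface with $\partial F=J$, and $\lk(J,K)_M=F\cdot K$; well-definedness follows from $[K]=0\in H_1(M)$, which forces the intersection of $K$ with any closed surface to vanish, so the answer is independent of the Seifert surface and of the chosen $j$.

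Finally I compute $F\cdot K$ by pushing $K$ a short distance off $\Sigma$ into $\Int C_{\mu}$, producing $K''$. The pieces $F_{\nu}\subset C_{\nu}$ and $G\subset\Sigma$ are then disjoint from $K''$ and contribute nothing, so $F\cdot K=\bar F_{\mu}\cdot K''=\sum_i c_i\,(D_i\cdot K'')$. The crux is the identity $D_i\cdot K''=\langle[\mu_i],[K]\rangle_{\Sigma}$: in a collar $\Sigma\times[0,1)\subset C_{\mu}$ the disk $D_i$ meets the level $\Sigma\times\{\epsilon\}$ in $\mu_i\times\{\epsilon\}$, while $K''=K\times\{\epsilon\}$, so the intersection points are exactly $(\mu_i\cap K)\times\{\epsilon\}$, counted with the sign prescribed by the pushoff direction. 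Summing gives $F\cdot K=\langle\sum_i c_i[\mu_i],[K]\rangle_{\Sigma}=\langle j,[K]\rangle_{\Sigma}$, which is (2). The main obstacle I anticipate is precisely this last geometric step: checking that the chosen pushoff of $K$ meets only the $C_{\mu}$-part of the Seifert surface, and pinning down all orientation conventions so that the collar count reproduces the pairing $\langle[\mu_i],[K]\rangle_{\Sigma}$ on the nose, consistently with the orientations fixed after Definition 2.1, rather than merely up to sign.
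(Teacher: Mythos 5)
Your proposal is correct and takes essentially the same route as the paper, whose proof is simply a deferral to Lemma 2.4 of \cite{FKSZ1}: that argument likewise reduces Claim (1) to the decomposition $[J]\in L_{\mu}+L_{\nu}$ (with Lemma \ref{lemma4.1}(3) playing the role of the Lagrangian condition $L_{\beta}^{\perp}=L_{\beta}$ from the closed case), builds a Seifert chain from compressing disks in the two compression bodies plus a $2$-chain in $\Sigma$, and pushes $K$ into the $\mu$-side to obtain $\lk(J,K)_M=\langle j,[K]\rangle_{\Sigma}$. The one step you leave as a check --- the collar-and-orientation count $D_i\cdot K''=\langle[\mu_i],[K]\rangle_{\Sigma}$ --- is exactly the routine convention-matching implicit in the cited proof, using the orientations fixed after Definition 2.1.
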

\begin{proof}
The proof of this lemma is the same as the proof of Lemma 2.4 \cite{FKSZ1}.
\end{proof}
We call the closed version of this lemma also Lemma \ref{lemma4.3}.
\vspace{1ex}

Let $N$ and $M$ be $\sharp^k S^1 \times S^2$ and $P \times \partial^{\pm} D \, \sharp \, (\sharp^{k-l} S^1 \times S^2)$, respectively. Considering a genus $l$ Heegaard splitting of $\sharp^l S^1 \times S^2$ constructed from $P \times \partial^{\pm} D$ and $P \times \partial^0 D \cup  \partial P \times D$, we can regard $M$ as a submanifold of $N$. $H_2(N)$ is a free module which has a basis $\bigl\{ [\{ *_i \} \times S^2] \bigr\}$. Suppose that 
$a$ is the family of arcs on $P$ as in Figure 4, then $\bigl\{[a_1 \times \partial^{\pm} D] , \dots , [a_l \times \partial^{\pm} D] , \bigl[ \{ *_{l + 1} \} \times S^2 \bigr] , \dots , \bigl[ \{ *_k \} \times S^2 \bigr] \bigr\}$ is a basis of $H_2(M, \partial M)$. We define an isomorphism $b \colon H_2(N) \to H_2(M, \partial M)$ as follows:
\[ b ([\{ *_i \} \times S^2 ]) = \begin{cases}
                                             [a_i \times \partial^{\pm} D]  \, & (i \le l) \\[1ex]
                                             \bigl[ \{ *_i \} \times S^2 \bigr] \, & (l+1               \le i) .
                                         \end{cases} \]
In regard to the intersections in $N$ and $M$, we have:
\begin{lemma} \label{lemma4.4}
For all $z \in H_2(N)$ and $x \in H_1(\Sigma)$, $\langle z, \iota_N x \rangle_N = \langle b(z), \iota_M x \rangle_M$.
\end{lemma}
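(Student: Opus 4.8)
The plan is to compute both sides geometrically and to exploit that the classes $\iota_N x$ and $\iota_M x$ can be represented by the \emph{same} curve on $\Sigma$, hence in $\Int M$, while the surfaces computing the two pairings differ only inside the complement $\overline{N \setminus M}$. Recall from the construction preceding the lemma that $M$ sits in $N$ as a codimension-$0$ submanifold with $\overline{N \setminus M} = P \times \partial^0 D \cup \partial P \times D$, that $\Sigma$ lies in $\Int M$, and that $\iota_N$ factors as $H_1(\Sigma) \xrightarrow{\iota_M} H_1(M) \xrightarrow{j_*} H_1(N)$, where $j \colon M \hookrightarrow N$ is the inclusion. Both pairings are the Poincar\'e(--Lefschetz) intersection pairings, computed by placing a surface and a curve in transverse position and counting signed intersection points. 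Since both sides are linear in $z$, it suffices to verify the identity on the basis $\{ [\{ *_i \} \times S^2] \}$ of $H_2(N)$; so I would fix a generator $z = [\{ *_i \} \times S^2]$ and represent $x$ by an oriented curve $\gamma \subset \Sigma \subset \Int M$.

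The key step is to describe how each generating sphere $S_i = \{ *_i \} \times S^2$ meets $M$. For $i \ge l+1$ the sphere lies in the $\sharp^{k-l} S^1 \times S^2$ summand, which we may isotope into $\Int M$; then $S_i \subset \Int M$ and $b(z) = [S_i]$ is literally the same class pushed into $H_2(M, \partial M)$. For $i \le l$, I would realize the genus-$l$ Heegaard splitting of $\sharp^l S^1 \times S^2$ so that the handlebody $P \times \partial^\pm D \subset M$ has meridian disks $a_i \times \partial^\pm D$, the arcs $a_i$ being chosen (as in Lemma \ref{lemma4.1} and Figure 4) to cut the relevant surface into a disk. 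Then $S_i$ is the union of the meridian disk $a_i \times \partial^\pm D \subset M$ and a complementary meridian disk of $P \times \partial^0 D \cup \partial P \times D = \overline{N \setminus M}$, glued along a curve on the Heegaard surface. Consequently $S_i \cap M = a_i \times \partial^\pm D$, whose relative class in $H_2(M,\partial M)$ is exactly $b(z)$.

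Finally, because $\gamma$ lies in $\Int M$, every intersection point of $\gamma$ with $S_i$ lies in $S_i \cap M$, so no intersections are created or lost by passing between $N$ and $M$. Hence the signed count $\langle z, \iota_N x \rangle_N = \#(\gamma \cap S_i)$ equals the signed count of $\gamma$ against $S_i \cap M$, which by the previous paragraph is the Lefschetz pairing $\langle b(z), \iota_M x \rangle_M$; bilinearity then gives the identity for all $z \in H_2(N)$. The main obstacle I anticipate is the geometric bookkeeping in the second step: identifying the meridian disk of $P \times \partial^\pm D$ dual to the $i$-th spherical generator with $a_i \times \partial^\pm D$, and verifying that the orientations of $\Sigma$, $M$ and $N$ are chosen compatibly so the two signed counts agree rather than differing by a global sign.
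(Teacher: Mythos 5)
Your proposal is correct and follows essentially the same route as the paper's proof: reduce by linearity to the basis $\{[\{*_i\}\times S^2]\}$, treat the case $i \ge l+1$ as immediate, and for $i \le l$ use the Heegaard splitting of $\sharp^l S^1\times S^2$ to see that $(\{*_i\}\times S^2)\cap M = a_i\times \partial^{\pm}D$, so that a representative curve of $x$ lying in $\Sigma \subset \Int M$ meets the sphere only inside $M$. The orientation bookkeeping you flag is indeed the only remaining detail, and the paper likewise leaves it implicit.
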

\begin{proof}
It is sufficient to show this regarding the basis $\{[\{*_i\} \times S^2]\}$. The case $l+1 \le i$ is clear. Considering the above canonical Heegaard splitting of $\sharp^l S^1 \times S^2$, we obtain $N$ by attaching a handlebody $H_l$ to $M$ along this splitting. Therefore, $\{*_i\} \times S^2$ is the sphere which is made of $a_i \times \partial^{\pm} D$ and a core disk of $H_l$, and then $a_i \times \partial^{\pm} D = (\{*_i\} \times S^2) \cap M$. Let $\gamma$ be a representative of $x$ such that it is transverse to $\{*_i\} \times S^2$. Since $\gamma$ is also in $M$, $(\{*_i\} \times S^2) \cap \gamma = (a_i \times \partial^{\pm} D) \cap \gamma$.
\end{proof}
\vspace{1ex}

\section{Calculating the homology and the intersection form}
Let $(\Sigma; X_1, X_2, X_3)$ be a $(g,k;p,b)$-relative trisection of a 4-manifold $X$. $M_{\mu\nu}$ denotes the manifold which admits a sutured Heegaard splitting $(\Sigma; C_{\mu}, -C_{\nu})$, and let $\partial_{\mu\nu}, \, \partial'_{\mu\nu}$ be the maps in Lemma \ref{lemma4.2} for this $M_{\mu\nu}$. In this section, we show how the homology and the intersection form of $X$ are calculated in terms of the intersection form of $\Sigma$ and the subgroups $L_{\nu}$. 

\setcounter{theorem}{0}
\begin{theorem}
The homology of $X$ can be obtained from the following chain complex $C^Y$: \vspace{-1ex}
{\small \begin{figure}[H]
\begin{tikzcd}
            0 \ar{r} &[-1em] (L_{\alpha} \cap L_{\gamma}) \oplus (L_{\beta} \cap L_{\gamma}) \ar{r}{\pi} &[-0.25em] L_{\gamma} \ar{r}{\rho} &[-0.25em] \Hom(L_{\alpha}^{\partial} \cap L_{\beta}^{\partial} , \mathbb{Z}) \ar{r}{0} &[-0.5em] \mathbb{Z} \ar{r} &[-1em] 0,
\end{tikzcd}
\end{figure}}\vspace{-2ex} \noindent
where $\pi (x,y) = x+y$ and $\rho(x) = \langle - , x \rangle_{\Sigma}$.
\end{theorem}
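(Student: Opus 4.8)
The plan is to read the statement off the handle decomposition $X = Y \cup V_2 \cup V_3$ of Lemma~\ref{lemma3.1}. Since the deformation retract $r\colon (X,\partial X)\to(Y,\partial Y)$ gives $H_i(X)\cong H_i(Y)$, it suffices to exhibit $C^Y$ as a chain complex isomorphic to the handle (cellular) chain complex of the handlebody $Y$. By Lemma~\ref{lemma3.1}(3) the latter has the shape $0\to C_3\xrightarrow{\partial_3}C_2\xrightarrow{\partial_2}C_1\xrightarrow{\partial_1}C_0\to 0$ with $C_0=\mathbb{Z}$, $C_1=\mathbb{Z}^{k_1}$, $C_2=\mathbb{Z}^{g-p}$ and $C_3=\mathbb{Z}^{k_2+k_3-2l}$, the $2$-handles being attached along $\gamma$. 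So I would match the four terms of $C^Y$, in order, with $C_3,C_2,C_1,C_0$, and then check that the differentials agree.

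First I would set up the identifications of the chain groups. The group $C_2$ is free on the $g-p$ $2$-handles, whose attaching circles are $\gamma_1,\dots,\gamma_{g-p}$; by Lemma~\ref{lemma4.1}(1) the classes $[\gamma_i]$ form a basis of $L_\gamma$, giving $C_2\cong L_\gamma$. For $C_3$, I would use that, in the proof of Lemma~\ref{lemma3.1}, the $3$-handles arise as duals of the interior $1$-handles of $V_2,V_3$, which are attached along $M_{\gamma\beta}=C_\gamma\cup C_\beta$ and $M_{\alpha\gamma}=C_\alpha\cup C_\gamma$; their belt spheres generate $H_2(M_{\alpha\gamma})$ and $H_2(M_{\gamma\beta})$, and these are carried isomorphically by $\partial_{\alpha\gamma},\partial_{\gamma\beta}$ onto $L_\alpha\cap L_\gamma$ and $L_\beta\cap L_\gamma$ by Lemma~\ref{lemma4.2}(1), yielding $C_3\cong(L_\alpha\cap L_\gamma)\oplus(L_\beta\cap L_\gamma)$. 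Finally $C_1$ is free on the $k_1$ $1$-handles, and since $Y^1=X_1$ has no $2$-handles, $C_1=H_1(Y^1)=H_1(X_1)$; identifying this with $H_1(M_{\alpha\beta})$ via inclusion and pairing it against $H_2(M_{\alpha\beta},\partial M_{\alpha\beta})\cong L_\alpha^\partial\cap L_\beta^\partial$ through the perfect Lefschetz pairing gives $C_1\cong\Hom(L_\alpha^\partial\cap L_\beta^\partial,\mathbb{Z})$.

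Next I would compute the three differentials under these identifications. Since $Y^1=X_1\cong\natural^{k_1}S^1\times D^3$ has free first homology of rank $k_1=\operatorname{rank}C_1$, we get $\partial_1=0$, matching the zero map on the right. For $\partial_2$, the boundary of the $i$-th $2$-handle is the class of $\gamma_i$ in $C_1=H_1(X_1)\cong H_1(M_{\alpha\beta})$; evaluating it on $z\in H_2(M_{\alpha\beta},\partial M_{\alpha\beta})$ and applying Lemma~\ref{lemma4.2}(2) rewrites the pairing as $\langle\partial'_{\alpha\beta}z,[\gamma_i]\rangle_\Sigma$, i.e.\ the functional $y\mapsto\langle y,[\gamma_i]\rangle_\Sigma$ on $L_\alpha^\partial\cap L_\beta^\partial$; extending linearly gives $\partial_2=\rho$. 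For $\partial_3$, the attaching belt sphere of a $3$-handle meets $\Sigma$ in the curves representing $\partial_{\alpha\gamma}x$ (resp.\ $\partial_{\gamma\beta}y$), and these lie in $L_\gamma=\ker\iota_\gamma$, so they are capped off by the $\gamma$-$2$-handles; hence $\partial_3(x,y)$ is the image of $\partial_{\alpha\gamma}x+\partial_{\gamma\beta}y$ under the inclusions $L_\alpha\cap L_\gamma,\,L_\beta\cap L_\gamma\hookrightarrow L_\gamma$, which is exactly $\pi(x,y)=x+y$.

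I expect the main obstacle to be the honest computation of $\partial_3$: one must trace the construction of the $3$-handles in Lemma~\ref{lemma3.1} carefully enough to see that each attaching belt sphere, pushed across the $\gamma$-$2$-handles, represents precisely the class $\partial_{\alpha\gamma}x\in L_\gamma$ (and likewise on the $\beta$ side), and to pin down the orientation conventions (those fixed when orienting $Q\cap\Sigma$ as the boundary of $Q\cap C_\mu$) so that the two contributions add with the correct sign, giving $x+y$ rather than $x-y$. A secondary technical point, needing its own short argument, is the isomorphism $H_1(X_1)\cong H_1(M_{\alpha\beta})$ induced by including the Heegaard piece into $\partial X_1$, together with the non-degeneracy of the Lefschetz pairing used to identify $C_1$ with the $\Hom$-group; both rest on $H_1(M_{\alpha\beta})$ being free of rank $k_1$.
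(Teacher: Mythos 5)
Your skeleton is the same as the paper's: work with the handlebody $Y$ of Lemma \ref{lemma3.1}, identify $C_2\cong L_\gamma$ via attaching circles, $C_3\cong(L_\alpha\cap L_\gamma)\oplus(L_\beta\cap L_\gamma)$ via attaching spheres and Lemma \ref{lemma4.2}(1), and $C_1\cong\Hom(L_\alpha^\partial\cap L_\beta^\partial,\mathbb{Z})$. Your treatment of $\partial_2$ is a legitimate (and slightly cleaner) variant: the paper instead sends each 1-handle to its belt-sphere class in $H_2(\partial X_1)$, transports it to $H_2(M_{\alpha\beta},\partial M_{\alpha\beta})$ by the map $b$ of Lemma \ref{lemma4.4}, and finishes with a dual-basis map; your route through $C_1\cong H_1(Y^1)=H_1(X_1)\cong H_1(M_{\alpha\beta})$ and the perfect intersection pairing with $H_2(M_{\alpha\beta},\partial M_{\alpha\beta})$, followed by Lemma \ref{lemma4.2}(2), gives the same commuting square, and the freeness facts you invoke do hold.

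The genuine gap is exactly at the step you deferred: the identity $f_2\circ\partial_3=\pi\circ f_3$. Your argument --- the attaching sphere meets $\Sigma$ in curves representing $\partial_{\gamma\alpha}[S]$, these lie in $L_\gamma$, hence are capped off by the $\gamma$-handles, hence $\partial_3(x,y)=x+y$ --- conflates two different quantities. The cellular boundary $\partial_3(h)$ is computed from the signed intersection numbers of the attaching sphere $S$ with the belt spheres of the 2-handles, whereas $\partial_{\gamma\alpha}[S]$ is the homology class of $S\cap\Sigma$ expressed in the $\gamma$-basis. These agree only after $S$ has been put in standard position with respect to the handle structure of $M_{\gamma\alpha}$: one must show $S$ is isotopic to a sphere whose intersection with each 3-dimensional 2-handle $D^2\times D^1$ of $C_\gamma$ consists of disks parallel to the core, for only then does each intersection point with the belt arc $(\{0\}\times D^1)_j$ contribute exactly one boundary circle, an oriented copy of $\gamma_j$, to $S\cap\Sigma$, yielding $[S_\gamma\cap\Sigma]=\sum_j\langle S_\gamma,(\{0\}\times D^1)_j\rangle\, f_2(h_j)=f_2(\partial_3 h)$. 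This normal-position claim is not automatic: a priori $S\cap(D^2\times D^1)$ contains planar pieces $\Sigma_{0,t}$ wrapping around the handle, and the bulk of the paper's proof is precisely the isotopy argument that removes them (pushing pieces with non-null-homologous boundary across $\partial D^2\times D^1$ to split off core-parallel disks, then observing that the remaining pieces, having null-homologous boundary, can be made disjoint from a shrunken handle $D^2_\epsilon\times D^1$). Flagging this as ``the main obstacle'' names the problem but supplies no mechanism, so the $C_3\to C_2$ square of your chain isomorphism is unproven. By contrast, the sign ambiguity you worry about is harmless: any discrepancy can be absorbed into the definition of the isomorphism $f_3$, which is what the paper does by taking $\partial_{\gamma\alpha}\oplus(-\partial_{\beta\gamma})$.
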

\begin{proof}
We calculate the homology of the handlebody $Y$. Let $(C_*, \partial_*)$ be the chain complex of $Y$. Since our manifold is oriented and connected, we will focus on the following nontrivial part of $(C_*, \partial_*)$: \vspace{-1ex}
\begin{figure}[H]
\centering
   \begin{tikzcd}
       0 \ar{r} & C_3 \ar{r}{\partial_3} & C_2 \ar{r}{\partial_2} & C_1 \ar{r} & 0  .
   \end{tikzcd}
\end{figure} \vspace{-1ex} \noindent
Let $c_1 \colon C_1 \to H_2(\partial X_1)$ be the isomorphism which sends each 1-handle generator of $C_1$ to the element of $H_2(\partial X_1)$ represented by its belt sphere, and let $b_2 \colon C_2 \to L_{\gamma}$ be the isomorphism which sends each 2-handle to the element of $L_{\gamma} \subset H_1(\Sigma)$ represented by its attaching circle. The boundary homomorphism $\partial_2$ is expressed as follows:
\[\partial_2 (h)  = \sum_j \langle \, c_1 (h_j) , \iota_{\partial X_1}b_2 (h) \, \rangle_{\partial X_1} h_j .\] 
We fix a diffeomorphism $\phi$ of $\partial X_1 \cong \sharp^k_1 S^1 \times S^2$ such that $(\partial X_1, M_{\alpha\beta}, \partial \Sigma \times [-1,1], \Sigma_{\alpha}, \Sigma_{\beta})$ correspond to $(\sharp^{k_1} S^1 \times S^2, P \times \partial^{\pm} D \, \sharp \, (\sharp^{k-l} S^1 \times S^2), \partial P \times \partial^{\pm} D, \\ P^-, P^+)$. We define $b_1 = (\phi_*)^{-1} \circ b \circ \phi_*$ by using this $\phi$ and the homomorphism $b$ in Lemma \ref{lemma4.4}. The above equality can be replaced by 
\[\partial_2 (h)  = \sum_j \langle \, b_1 c_1 (h_j) , \iota_{M_{\alpha\beta}}b_2(h) \, \rangle_{M_{\alpha\beta}} h_j. \]
Furthermore, by Lemma \ref{lemma4.2} (2), this equality can also be replaced by
\[\partial_2 (h)  =  \sum_j \langle \, \partial'_{\alpha\beta} b_1 c_1 (h_j) , b_2 (h) \, \rangle_{\Sigma} \, h_j. \]
Let $D \colon L_{\alpha}^{\partial} \cap L_{\beta}^{\partial} \to \Hom(L_{\alpha}^{\partial} \cap L_{\beta}^{\partial}, \mathbb{Z})$ be the dual map induced by the basis $\{[\partial'_{\alpha\beta} b_1 c_1 (h_j)]\}$. If we define $f_1 = D \circ \partial'_{\alpha\beta} \circ b_1 \circ c_1$ and $f_2 = b_2$, then we have
\begin{equation} \label{chain1}
\rho \circ f_2 = f_1 \circ \partial_2 . 
\end{equation}

We separate $C_3$ into $C_3^3 \oplus C_3^2$ to define $f_3$, where $C_3^i$ is the free module generated by the 3-handles derived from $X_i$. Using this decomposition, we define an isomorphism $b_3 \colon C_3 \to H_2(M_{\gamma\alpha}) \oplus H_2(M_{\beta\gamma})$ which maps each 3-handle generator of $C_3^3$ to the element of $H_2(M_{\gamma\alpha})$ represented by its attaching sphere in $M_{\gamma\alpha}$, and likewise for the generator of $C_3^2$ and $H_2(M_{\beta\gamma})$. Let $f_3 \colon C_3 \to (L_{\alpha} \cap L_{\gamma}) \oplus (L_{\beta} \cap L_{\gamma})$ be the isomorphism obtained by composing this $b_3$ with the sum of  $\partial_{\gamma\alpha} \colon H_2(M_{\gamma\alpha}) \to L_{\alpha} \cap L_{\gamma}$ and $-\partial_{\beta\gamma} \colon H_2(M_{\beta\gamma}) \to L_{\beta} \cap L_{\gamma}$ from Lemma \ref{lemma4.2} (1). We prove that 
\begin{gather} \label{chain2}
 \pi \circ f_3 = f_2 \circ \partial_3. 
\end{gather}
These equations (\ref{chain1}), (\ref{chain2}) will show that $f_*$ is a chain isomorphism between $C$ and $C^Y$, that is, the theorem holds.

It is sufficient to show (\ref{chain2}) in regard to the generators of $C_3^3$. $M_{\gamma\alpha}$ consists of 3-dimensional 2-handles whose attaching circles are $\gamma$, $\Sigma \times D^1$ and 2-handles whose attaching circles are $\alpha$. Every embedded sphere $S \subset M_{\gamma\alpha}$ is isotopic to a sphere which intersects 2-handles whose attaching circles are $\gamma$ only in disks that are parallel to the core of a 2-handle. We focus on each 2-handle $D^2 \times D^1$ to show this.

$S \cap D^2 \times D^1$ consists of some $\Sigma_{0,t_j} \, (t_j \ge 1)$ whose boundary $\partial \Sigma_{0,t_j}$ are included in $\partial D^2 \times D^1$. If there is $\Sigma_{0,t_j}$ which is not a disk and has a boundary component that is not null-homologous in $\partial D^2 \times D^1$, we move $S$ as in the following figure so that it produces a disk which is parallel to the core and $\Sigma_{0,t_j}$ whose non null-homologous boundary components decrease. 
\begin{figure}[H]
\centering
\includegraphics[scale=0.8,pagebox=cripbox]{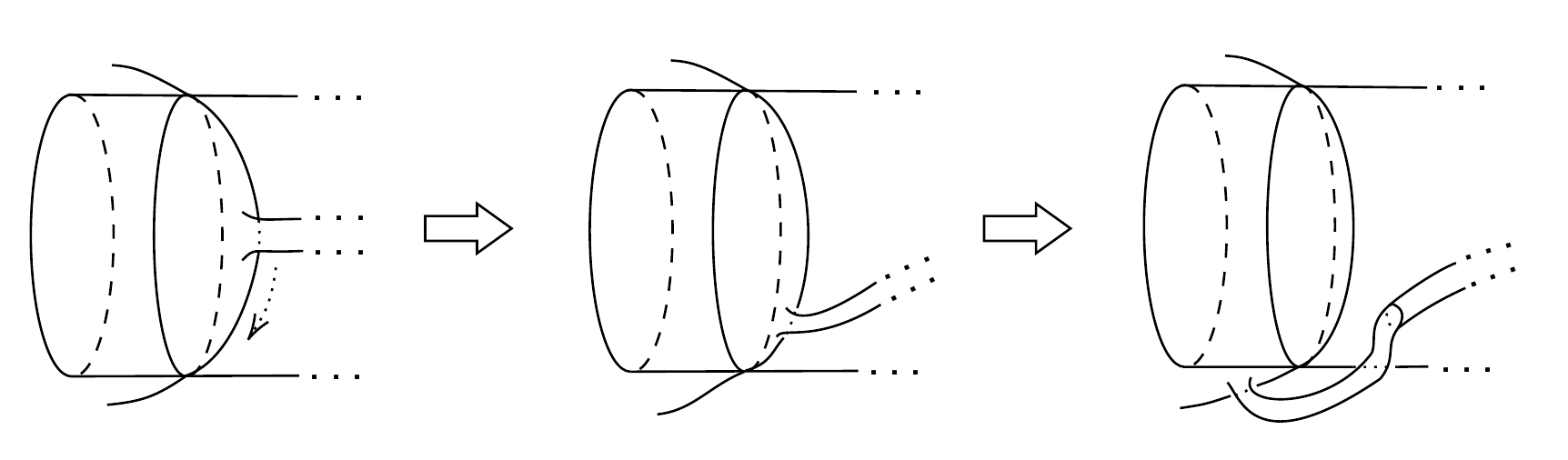}
\caption{Isotopy of $S$.}
\end{figure}
If necessary, by performing this operation several times, $S \cap D^2 \times D^1$ becomes some disks which are parallel to the core and some $\Sigma_{0,t_j}$ whose boundary components are null-homologous. Since we can regard the latter as the boundary of a tubular neighborhood of an embedded graph whose boundary components are included in $\partial D^2 \times D^1$, these can be disjoint from $\{0\} \times D^1$. Therefore, for sufficiently small $\epsilon > 0$, every $\Sigma_{0,t_j}$ is disjoint from $D_{\epsilon}^2 \times D^1$. If we replace a 2-handle $D^2 \times D^1$ with $D_{\epsilon}^2 \times D^1$, $S$ intersects the 2-handle only in disks that are parallel to the core. This shows that $S$ is isotopic to a sphere which satisfies the condition.

Performing the same operation regarding $\alpha$-side, we can assume that $S$ intersects the both sides only in the parallel disks. Furthermore, $S$ is isotopic to $S_{\gamma}$ which intersects $C_{\gamma}$ in the parallel disks. ($S$ is also isotopic to $S_{\alpha}$ in the same way.) 

Let $S$ be the attaching sphere of a 3-handle $h$. Since $f_3(h) = \partial_{\gamma\alpha} ([S])$ and $\pi$ is an inclusion, we have
\[ \pi \circ f_3(h) = [S_{\gamma} \cap \Sigma]. \]
Let $D_{ij}$ be a disk component of $S_{\gamma} \cap C_{\gamma}$ which is parallel to the core disk of 2-handle $h_j$, and then  $[S_{\gamma} \cap \Sigma] = \sum_{i,j} [\partial D_{ij}]$. From the way to define the orientation of $\partial D_{ij}$, $[\partial D_{ij}]$ is equivalent to $\langle D_{ij}, (\{0\} \times D^1)_j \rangle_{M_{\gamma\alpha}} f_2(h_j)$. Since $S_{\gamma} \cap h_j = \coprod_i D_{ij}$, we have 
\[ \sum_{i,j} [\partial D_{ij}] = \sum_j \langle S_{\gamma}, (\{0\} \times D^1)_j \rangle_{\partial Y} f_2(h_j). \]
The belt sphere of $h_j$ appears as $(\{0\} \times D^1)_j$ in $M_{\gamma\alpha}$. Therefore, the right-hand side of the formula is equal to $f_2 \circ \partial_2 (h)$.
\end{proof}

\begin{theorem}
The homology of $X$ can also be obtain from the following chain complex $C^Z$: \vspace{-1ex}
{\small \begin{figure}[H]
\begin{tikzcd}
  0 \ar{r} &[-1.5em] (L_{\alpha} \cap L_{\beta}) \oplus (L_{\beta} \cap L_{\gamma}) \oplus (L_{\gamma} \cap L_{\alpha}) \ar{r}{\zeta} &[-1.2em] L_{\alpha} \oplus L_{\beta} \oplus L_{\gamma} \ar{r}{\iota} &[-1.2em] H_1(\Sigma) \ar{r}{0} &[-1.2em] \mathbb{Z} \ar{r} &[-1.5em] 0,
\end{tikzcd}
\end{figure}} \vspace{-2ex} \noindent
where $\zeta (x, y, z) = (x-z, y-x, z-y)$ and $\iota$ is a homomorphism induced by the inclusions $\iota_{\nu}$.
\end{theorem}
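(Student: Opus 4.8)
The plan is to reproduce the argument of Theorem~\ref{theorem1} verbatim in structure, but starting from the second handle decomposition $X = Z \cup W_1 \cup W_2 \cup W_3$ instead of $X = Y \cup V_2 \cup V_3$. Since $X$ deformation retracts onto $Z$, it is enough to compute the homology of the handlebody $Z$ from its handle chain complex $(C_*, \partial_*)$, whose only nontrivial portion is $C_3 \xrightarrow{\partial_3} C_2 \xrightarrow{\partial_2} C_1$, together with $C_0 \cong \mathbb{Z}$. I will exhibit isomorphisms $f_1, f_2, f_3$ from the $C_i$ onto the corresponding terms of $C^Z$ and check that they intertwine the differentials, so that $f_*$ is a chain isomorphism. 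The payoff of using $Z$ is that the codomain of $\partial_2$ is now $H_1(\Sigma)$ on the nose, so none of the dualization needed in Theorem~\ref{theorem1} reappears and the identifications in degrees $1$ and $2$ become essentially tautological.

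First I would fix the identifications in low degree. Because $Z^1 \cong \Sigma \times D^2$ carries no $2$-handles, the chain group $C_1$ equals $H_1(Z^1) \cong H_1(\Sigma)$, and I take $f_1$ to be this isomorphism. The $2$-handles split as $C_2 = C_2^\alpha \oplus C_2^\beta \oplus C_2^\gamma$ according to which of the families $\alpha, \beta, \gamma$ contains the attaching circle, and I let $f_2$ send each $2$-handle to the class of its attaching circle, which lies in $L_\nu$ by Lemma~\ref{lemma4.1}(1); this gives an isomorphism $f_2 \colon C_2 \xrightarrow{\sim} L_\alpha \oplus L_\beta \oplus L_\gamma$. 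Under the handle differential, $\partial_2$ carries a $2$-handle to the class of its attaching circle in $C_1 = H_1(Z^1)$, which is precisely the image of that class under the inclusion $L_\nu \hookrightarrow H_1(\Sigma)$; hence $f_1 \circ \partial_2 = \iota \circ f_2$, where $\iota$ is the resulting sum of inclusions $L_\nu \hookrightarrow H_1(\Sigma)$. The rightmost square, involving $C_0 \cong \mathbb{Z}$ and the zero map $H_1(\Sigma) \to \mathbb{Z}$, commutes because $Z$ is connected and so $\partial_1 = 0$.

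For the top differential I would split $C_3 = C_3^1 \oplus C_3^2 \oplus C_3^3$ by the index $i$ of the piece $X_i$ from which each $3$-handle arises, so that the attaching sphere of a handle in $C_3^i$ lies in $M_{\alpha\beta}$, $M_{\beta\gamma}$, or $M_{\gamma\alpha}$ respectively. Composing the attaching-sphere identification with the isomorphisms $\partial_{\alpha\beta}, \partial_{\beta\gamma}, \partial_{\gamma\alpha}$ of Lemma~\ref{lemma4.2}(1) defines $f_3 \colon C_3 \xrightarrow{\sim} (L_\alpha \cap L_\beta) \oplus (L_\beta \cap L_\gamma) \oplus (L_\gamma \cap L_\alpha)$, and it remains to verify $f_2 \circ \partial_3 = \zeta \circ f_3$. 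Here I would reuse, without change, the isotopy from the proof of Theorem~\ref{theorem1}: the attaching sphere $S$ of a $3$-handle can be moved so that it meets each compression body only in disks parallel to the cores of the $2$-handles, whence $f_2 \circ \partial_3$ is read off from $[S \cap \Sigma]$ on each of the two sides. The decisive input is the orientation convention recorded just before Lemma~\ref{lemma4.2}: the curves inherited from the $C_\mu$-side and the $C_\nu$-side carry opposite orientations. Consequently a handle from $X_1$ contributes $\partial_{\alpha\beta}[S]$ to the $\alpha$-summand and $-\partial_{\alpha\beta}[S]$ to the $\beta$-summand and nothing to the $\gamma$-summand, and cyclically for $X_2$ and $X_3$; summing these is exactly $\zeta(x,y,z) = (x-z, y-x, z-y)$, with no auxiliary signs needed on $f_3$.

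I expect the only genuine work to be this degree-three sign bookkeeping; the isotopy of $S$ into parallel disks is identical to the one already established for Theorem~\ref{theorem1} and may be quoted verbatim, so the main obstacle is simply confirming that the cyclic $C_\mu$/$C_\nu$ orientation conventions assemble into the alternating pattern of $\zeta$ rather than some other signed combination. Once both squares are shown to commute, $f_*$ is a chain isomorphism from $(C_*, \partial_*)$ onto $C^Z$, and the deformation retraction of $X$ onto $Z$ identifies its homology with $H_*(X)$, proving the theorem.
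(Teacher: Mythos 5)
Your proposal is correct, and its skeleton is the paper's: compute the handle chain complex of $Z$ from Lemma 3.2, exhibit a chain isomorphism onto $C^Z$, and prove the degree-3 square by quoting the isotopy-to-parallel-disks argument from Theorem 1 together with the $C_{\mu}$ versus $-C_{\nu}$ orientation convention; your sign analysis, namely that an $X_1$-handle contributes $(\partial_{\alpha\beta}[S],\, -\partial_{\alpha\beta}[S],\, 0)$ and cyclically for $X_2, X_3$ with no auxiliary signs on $f_3$, is exactly the paper's verification of $\zeta \circ f'_3 = f'_2 \circ \partial'_3$, including the orientation subtlety on the $-C_{\nu}$ side. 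Where you genuinely differ is the degree-1/2 square. The paper handles it uniformly with Theorem 1: it expresses $\partial'_2$ through intersection numbers of attaching circles with belt spheres in $\partial Z^1$, transfers these to $\Sigma \times \partial^{\pm}D$ via the map $b$ of Lemma 4.4 applied with $p = g$, $k = 2g+b-1$, converts them to intersections in $\Sigma$ via Lemma 4.2(2), and then dualizes, implicitly identifying $\Hom(H_1(\Sigma, \partial \Sigma), \mathbb{Z})$ with $H_1(\Sigma)$ through the unimodular intersection pairing of the surface. You instead invoke the canonical isomorphisms $C_1 = H_1(Z^1, Z^0) \cong H_1(Z^1) \cong H_1(\Sigma)$ and the standard fact that the handle differential carries a 2-handle to the class of its attaching circle in the 1-skeleton; since the attaching circles are the curves $\nu_i \times \{\mathrm{pt}\}$ in $\Sigma \times \partial D^2$, that class is $[\nu_i]$, and the square commutes essentially by definition of $\iota$. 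Your shortcut is legitimate and cleaner, and it works precisely because $Z^1$ deformation retracts to $\Sigma$, a feature unavailable in Theorem 1, where $Y^1 = X_1$ forces the detour through duality. What the paper's longer route buys is uniformity and reusability: the same belt-sphere and intersection-pairing formalism serves both theorems verbatim, and it is this explicit pairing description of the degree-1 term that later feeds the matrix formulas for framings and linking numbers in Theorems 3 and 4 and Proposition 6.3.
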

\begin{proof}
Let $(C'_*, \partial'_*)$ be the chain complex of $Z$. Its nontrivial part is \vspace{-1ex}
\begin{figure}[H] 
\centering
   \begin{tikzcd}
       0 \ar{r} & C'_3 \ar{r}{\partial'_3} & C'_2 \ar{r}{\partial'_2} & C'_1 \ar{r} & 0  .
   \end{tikzcd}
\end{figure} \vspace{-1ex}
Note that $Z^1$ is diffeomorphic to $\Sigma \times D$. Applying $p = g, \, k = 2g+b-1$ to Lemma \ref{lemma4.2} and 4.3, we obtain isomorphisms $b'_1 \colon H_2(\partial Z^1) \to H_2(\Sigma \times \partial^{\pm} D, \partial (\Sigma \times \partial^{\pm} D))$ and $\partial' \colon H_2(\Sigma \times \partial^{\pm} D, \partial (\Sigma \times \partial^{\pm} D)) \to H_1(\Sigma, \partial \Sigma)$. If we define $f'_1$ and $f'_2$ in the same way as in the proof of Theorem \ref{theorem1}, the following formula also holds:
\[\iota \circ f'_2 = f'_1 \circ \partial'_2. \]

Similarly, we obtain $f'_3$ by composing $b'_3 \colon C'_3 \to H_2(M_{\alpha\beta}) \oplus H_2(M_{\beta\gamma}) \oplus H_2(M_{\gamma\alpha})$ and $\partial_{\alpha\beta} \oplus \partial_{\beta\gamma} \oplus \partial_{\gamma\alpha}$. Let $S$ be the attaching sphere of a 3-handle $h \in C^{\prime 1}_3$, and then we have
\[\zeta \circ f'_3(h) = (\partial_{\alpha\beta}([S]), -\partial_{\alpha\beta}([S]), \,\, 0 \,\, ). \]
$S$ is isotopic to $S_{\alpha}$ and $S_{\beta}$ as in the proof of Theorem \ref{theorem1}. Let $D^{\alpha}_{ij}$ and $D^{\beta}_{ij}$ be a disk component of $S_{\alpha} \cap C_{\alpha}$ and $S_{\beta} \cap -C_{\beta}$, respectively. From the way to orient $\partial_{\alpha\beta}$, the right-hand side is equal to $(\sum_{i,j} [\partial D_{ij}^{\alpha}], \sum_{i,j} [\partial D_{ij}^{\beta}], \,\, 0 \,\,)$. Even though the belt sphere of 2-handle $h_j^{\beta}$ appears as $-(\{0\} \times D^1)_j^{\beta}$ in $M_{\alpha\beta}$, the intersection number of the core disk of $h_j^{\beta}$ with it in $-C_{\beta}$ is $+1$. Therefore, we obtain 
\[ \zeta \circ f'_3 = f'_2 \circ \partial'_3. \qedhere \]
\end{proof}
Using Theorem \ref{theorem1} and \ref{theorem2}, we can describe each $H_1(X)$, $H_2(X)$ and $H_3(X)$ by $L_{\nu}$ in the same way as \cite{FKSZ1} and \cite{FM1}. We give a precise proof of the description of $H_2(X)$ by using Theorem \ref{theorem1}.
\begin{corollary} \label{corollary5.1}
 \[ H_2(X) \simeq \frac{L_{\gamma} \cap (L_{\alpha} + L_{\beta})}{(L_{\gamma} \cap L_{\alpha}) + (L_{\gamma} \cap L_{\beta})} . \]
\end{corollary}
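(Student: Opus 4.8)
The plan is to read $H_2(X)$ off the chain complex $C^Y$ of Theorem \ref{theorem1}. By the deformation retract $r\colon (X,\partial X)\to(Y,\partial Y)$ we have $H_2(X)\cong H_2(Y)$, and $C^Y$ is the (reindexed) cellular chain complex of $Y$ with $L_{\gamma}$ in the middle position, so $H_2(X)=\ker\rho/\im\pi$. Since $\pi(x,y)=x+y$ on $(L_{\alpha}\cap L_{\gamma})\oplus(L_{\beta}\cap L_{\gamma})$, its image is exactly $(L_{\gamma}\cap L_{\alpha})+(L_{\gamma}\cap L_{\beta})$, the desired denominator. Thus the corollary reduces to the single identification $\ker\rho=L_{\gamma}\cap(L_{\alpha}+L_{\beta})$.

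By definition $\rho(x)=\langle-,x\rangle_{\Sigma}$, so $x\in\ker\rho$ if and only if $x\in L_{\gamma}$ and $\langle y,x\rangle_{\Sigma}=0$ for all $y\in L_{\alpha}^{\partial}\cap L_{\beta}^{\partial}$; that is, $\ker\rho=L_{\gamma}\cap(L_{\alpha}^{\partial}\cap L_{\beta}^{\partial})^{\bot}$. The heart of the proof is therefore the key identity $(L_{\alpha}^{\partial}\cap L_{\beta}^{\partial})^{\bot}=L_{\alpha}+L_{\beta}$. To prove it I would apply Lemma \ref{lemma4.2}(3) to $M=M_{\alpha\beta}$, which carries the sutured Heegaard splitting $(\Sigma;C_{\alpha},-C_{\beta})$. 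That lemma requires $H_1(M_{\alpha\beta},\partial M_{\alpha\beta})$ to be torsion free; this holds because, by Definition \ref{def:diagram}, $M_{\alpha\beta}\cong P\times\partial^{\pm}D\,\sharp\,(\sharp^{k_1-l}S^1\times S^2)$ has free homology (being built from pieces homotopy equivalent to a wedge of circles and to $S^1\times S^2$), whence $H_1(M_{\alpha\beta},\partial M_{\alpha\beta})\cong H^2(M_{\alpha\beta})$ is torsion free by Lefschetz duality. Lemma \ref{lemma4.2}(3) then yields $(L_{\alpha}^{\partial}\cap L_{\beta}^{\partial})^{\bot}=\ker\iota_{M_{\alpha\beta}}$.

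It remains to compute $\ker\iota_{M_{\alpha\beta}}$, for which I would use the Mayer--Vietoris sequence of the splitting $M_{\alpha\beta}=C_{\alpha}\cup_{\Sigma}(-C_{\beta})$, noting $\iota_{M_{\alpha\beta}}=j_{\alpha*}\circ\iota_{\alpha}$ where $j_{\alpha}\colon C_{\alpha}\hookrightarrow M_{\alpha\beta}$. Exactness at $H_1(C_{\alpha})\oplus H_1(C_{\beta})$ identifies $\ker j_{\alpha*}$ with $\iota_{\alpha}(L_{\beta})$, and a short diagram chase then shows that $x\in\ker\iota_{M_{\alpha\beta}}$ forces $x=a+b$ with $a\in L_{\alpha}$ and $b\in L_{\beta}$; the reverse inclusion is immediate from $\iota_{\alpha}|_{L_{\alpha}}=0$ and $\iota_{\beta}|_{L_{\beta}}=0$. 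Hence $\ker\iota_{M_{\alpha\beta}}=L_{\alpha}+L_{\beta}$, and combining with the previous step gives $\ker\rho=L_{\gamma}\cap(L_{\alpha}+L_{\beta})$, which completes the computation.

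I expect the main obstacle to be the key identity $(L_{\alpha}^{\partial}\cap L_{\beta}^{\partial})^{\bot}=L_{\alpha}+L_{\beta}$. The containment $L_{\alpha}+L_{\beta}\subseteq(L_{\alpha}^{\partial}\cap L_{\beta}^{\partial})^{\bot}$ is a one-line consequence of Lemma \ref{lemma4.1}(3), since each of $L_{\alpha}=(L_{\alpha}^{\partial})^{\bot}$ and $L_{\beta}=(L_{\beta}^{\partial})^{\bot}$ annihilates $L_{\alpha}^{\partial}\cap L_{\beta}^{\partial}$. The reverse containment is exactly where torsion-freeness of $H_1(M_{\alpha\beta},\partial M_{\alpha\beta})$ is indispensable: without it the orthogonal complement could be strictly larger than $L_{\alpha}+L_{\beta}$ (namely its saturation), and it is precisely the nondegeneracy supplied by Lemma \ref{lemma4.2} that rules this out. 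Everything else—reading off $H_2$ from $C^Y$, computing $\im\pi$, and the Mayer--Vietoris identification of $\ker\iota_{M_{\alpha\beta}}$—is routine.
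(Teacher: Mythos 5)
Your proof is correct, but it establishes the crucial identity $(L_{\alpha}^{\partial}\cap L_{\beta}^{\partial})^{\bot}=L_{\alpha}+L_{\beta}$ by a genuinely different route than the paper. The outer reduction is identical in both arguments: read $H_2$ off $C^Y$, observe $\im\pi=(L_{\gamma}\cap L_{\alpha})+(L_{\gamma}\cap L_{\beta})$ and $\ker\rho=L_{\gamma}\cap(L_{\alpha}^{\partial}\cap L_{\beta}^{\partial})^{\bot}$, and get the easy inclusion $L_{\alpha}+L_{\beta}\subset(L_{\alpha}^{\partial}\cap L_{\beta}^{\partial})^{\bot}$ from Lemma \ref{lemma4.1}(3). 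For the hard inclusion, however, the paper never invokes Lemma \ref{lemma4.2}(3); instead it uses Definition \ref{def:diagram}(iii) to put $(\Sigma;\alpha,\beta)$ in the standard position $(\Sigma;\delta^{k_1},\epsilon^{k_1})$ after handle slides, chooses arcs $a$ disjoint from $\alpha\cup\beta$, arranges $[\alpha_i]=[\beta_i]$ for $i\le k_1-l$ and $[\alpha_i^{*}]=[\beta_i]$, $\langle[\alpha_i],[\beta_j]\rangle_{\Sigma}=\delta_{ij}$ for $i,j\ge k_1-l+1$, and then checks directly that anything orthogonal to the basis $\{[\alpha_1],\dots,[\alpha_{k_1-l}],[a_1],\dots,[a_l]\}$ of $L_{\alpha}^{\partial}\cap L_{\beta}^{\partial}$ is a linear combination of the $[\alpha_i]$ and the $[\alpha_i^{*}]=[\beta_i]$, hence lies in $L_{\alpha}+L_{\beta}$. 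Your route --- Lemma \ref{lemma4.2}(3) applied to $M_{\alpha\beta}$, with the torsion-freeness hypothesis verified (correctly, and this verification is genuinely needed) via freeness of $H_1(M_{\alpha\beta})$ for $M_{\alpha\beta}\cong P\times I\,\sharp\,(\sharp^{k_1-l}S^1\times S^2)$ plus Lefschetz duality, followed by the Mayer--Vietoris identification $\ker\iota_{M_{\alpha\beta}}=L_{\alpha}+L_{\beta}$ --- is equally valid: the chase is sound, since $j_{\alpha*}\iota_{\alpha}x=0$ forces $(\iota_{\alpha}x,0)\in\im\bigl(\iota_{\alpha},-\iota_{\beta}\bigr)$, whence $x\in L_{\alpha}+L_{\beta}$, and the reverse inclusion is immediate. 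As for what each approach buys: yours is more intrinsic (no standardization of $(\alpha,\beta)$, no choice of explicit bases) and is closer in spirit to the closed-case argument of \cite{FKSZ1}, but it leans on Lemma \ref{lemma4.2}, whose proof the paper defers to \cite{FKSZ1}; the paper's computation is self-contained given Lemma \ref{lemma4.1}, and the standard position and the arcs $a$ it constructs are reused later in the paper (notably in the proof of Theorem \ref{theorem3}), which is presumably why the author argues this way.
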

\begin{proof}
We should calculate $\ker \rho$ and $\im \pi$. It is easy to show $\im \pi = (L_{\gamma} \cap L_{\alpha}) + (L_{\gamma} \cap L_{\beta})$ and $\ker \rho = L_{\gamma} \cap (L_{\alpha}^{\partial} \cap L_{\beta}^{\partial})^{\bot}$. Therefore, it is sufficient to show $(L_{\alpha}^{\partial} \cap L_{\beta}^{\partial})^{\bot} = L_{\alpha} + L_{\beta}$.

$(L_{\alpha}^{\partial} \cap L_{\beta}^{\partial})^{\bot} \supset L_{\alpha} + L_{\beta}$ is clear because of Lemma \ref{lemma4.1} (3). Since $(\Sigma; \alpha, \beta)$ is equivalent to $(\Sigma; \delta^{k_1}, \epsilon^{k_1})$, performing handle slides on each $\alpha$ and $\beta$ if necessary, we obtain a family of proper arcs $a$ which is disjoint from $\alpha \cup \beta$, and cuts each $\Sigma_{\alpha}$ and $\Sigma_{\beta}$ into a disk. Moreover, we can assume $[\alpha_i ] = [\beta_i] \in L_{\alpha} \cap L_{\beta} \, \, (i \le k-l)$,$\langle [ \alpha_i ] , [ \beta_j ]\rangle_{\Sigma} = \delta_{ij} \, \, (i , j \ge k-l+1 )$. For this $\alpha$, let $\{[\alpha], [\alpha^*], [a^*]\}$ be the basis as in the proof of Lemma \ref{lemma4.1} (3). We can also assume $[\alpha^*_i] = [\beta_i] \, \, (i \ge k-l+1)$. Since $\{ [\alpha_1] , \dots , [\alpha_{k-l}] , [a_1] , \dots , [a_l] \}$ becomes a basis of $L_{\alpha}^{\partial} \cap L_{\beta}^{\partial}$, $x \in (L_{\alpha}^{\partial} \cap L_{\beta}^{\partial})^{\bot}$ must be represented as a liner combination of $[\alpha_1] , \dots , [\alpha_{g-p}] , [\alpha^*_{k-l+1}] , \dots , [\alpha^*_{g-p}]$.
\end{proof}
Since the proof of the following corollary is the same as \cite{FKSZ1} and \cite{FM1}, we only introduce the forms of $H_1$, $H_3$.
\begin{corollary}
\[ H_1(X) \simeq \frac{H_1(\Sigma)}{L_{\alpha} + L_{\beta} + L_{\gamma}} , \quad  H_3(X) \simeq L_{\alpha} \cap L_{\beta} \cap L_{\gamma} . \]
\end{corollary}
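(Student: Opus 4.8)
The plan is to read both isomorphisms directly off the chain complex $C^Z$ of Theorem \ref{theorem2}, since $H_1(X)$ and $H_3(X)$ sit at the two ends of that complex, where only a single differential is involved. The chain isomorphism constructed in the proof of Theorem \ref{theorem2} is degree-preserving, so we may place the terms of $C^Z$ in degrees $3,2,1,0$ reading left to right: $C^Z_3 = (L_\alpha \cap L_\beta) \oplus (L_\beta \cap L_\gamma) \oplus (L_\gamma \cap L_\alpha)$, $C^Z_2 = L_\alpha \oplus L_\beta \oplus L_\gamma$, $C^Z_1 = H_1(\Sigma)$ and $C^Z_0 = \mathbb{Z}$, with differentials $\partial_3 = \zeta$, $\partial_2 = \iota$ and $\partial_1 = 0$. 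The ranks $\sum_i k_i - 3l$, $3(g-p)$, $2g+b-1$ and $1$ match the handle counts of $Z$, and in particular $C^Z$ vanishes above degree $3$.

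For $H_1$, the relevant segment is $L_\alpha \oplus L_\beta \oplus L_\gamma \xrightarrow{\iota} H_1(\Sigma) \xrightarrow{0} \mathbb{Z}$. Since the differential leaving $H_1(\Sigma)$ is zero, $H_1(X) = H_1(\Sigma)/\im\iota$, and it remains only to identify $\im\iota$. By construction $\iota$ is the sum of the natural inclusions $L_\nu \hookrightarrow H_1(\Sigma)$, where each $L_\nu = \ker\iota_\nu$ is a submodule of $H_1(\Sigma)$ with basis given by Lemma \ref{lemma4.1}(1); hence $\im\iota = L_\alpha + L_\beta + L_\gamma$ and $H_1(X) \simeq H_1(\Sigma)/(L_\alpha + L_\beta + L_\gamma)$.

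For $H_3$, since $C^Z$ is zero in degree $4$ we have $H_3(X) = \ker\zeta$. Solving $\zeta(x,y,z) = (x-z,\, y-x,\, z-y) = 0$ forces $x = y = z$ inside $H_1(\Sigma)$, and this common element then lies in $(L_\alpha \cap L_\beta) \cap (L_\beta \cap L_\gamma) \cap (L_\gamma \cap L_\alpha) = L_\alpha \cap L_\beta \cap L_\gamma$; conversely every element of the triple intersection yields such a triple. Thus $\ker\zeta \cong L_\alpha \cap L_\beta \cap L_\gamma$. As a consistency check one can recompute $H_3(X)$ from the complex $C^Y$ of Theorem \ref{theorem1} as $\ker\pi$, where $\pi(x,y) = x+y$ with $x \in L_\alpha \cap L_\gamma$ and $y \in L_\beta \cap L_\gamma$, obtaining $L_\alpha \cap L_\beta \cap L_\gamma$ once more.

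The computations are elementary linear algebra, so the only genuine content is bookkeeping: confirming that the algebraic differentials of $C^Z$ are exactly the obvious maps, in particular that $\iota$ restricts to the inclusion on each $L_\nu$, which is what makes $\im\iota$ the naive sum, and that the complex really terminates in degree $3$. Both facts are already built into Theorem \ref{theorem2} and the handle decomposition of $Z$, so I do not anticipate a serious obstacle; the substantive work was already carried out in establishing Theorems \ref{theorem1} and \ref{theorem2}.
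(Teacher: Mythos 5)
Your proposal is correct and is essentially the paper's approach: the paper omits the computation (deferring to \cite{FKSZ1} and \cite{FM1}), and the intended argument is exactly your read-off of $H_1(X) = H_1(\Sigma)/\im\iota$ and $H_3(X) = \ker\zeta$ from the two ends of the complex $C^Z$ of Theorem \ref{theorem2}, where $\im\iota = L_\alpha + L_\beta + L_\gamma$ and $\ker\zeta \cong L_\alpha \cap L_\beta \cap L_\gamma$ are immediate. Your cross-check via $\ker\pi$ in $C^Y$ is a nice consistency verification but not needed.
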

\vspace{1.5ex}

To consider the intersection form of $X$, we define a bilinear form $\Phi$.
\begin{definition}
Let $\Phi \colon L_{\gamma} \cap (L_{\alpha} + L_{\beta}) \times L_{\gamma} \cap (L_{\alpha} + L_{\beta}) \to \mathbb{Z}$ be the bilinear form given as follows;
\[ \Phi (x , y) =-\langle x' , y \rangle_{\Sigma}, \]
where $x'$ is any element of $L_{\alpha}$ which satisfies $x-x' \in L_{\beta}$.
\end{definition}
There is an element $x'$ satisfying the above condition since $x \in L_{\alpha} + L_{\beta}$, but it is not unique in general.. For another $x''$, $x'-x'' \in L_{\alpha} \cap L_{\beta}$. Since $y \in (L_{\alpha}^{\partial} \cap L_{\beta}^{\partial})^{\bot}$, $\langle x'-x'' , y \rangle_{\Sigma} = 0$. Therefore, the value of $\Phi$ does not depend on a choice of $x'$. We describe the bilinear form induced by $\Phi$ on any quotient group as also $\Phi$.

The intersection form of $X$ is equivalent to $\Phi$. We can prove the following proposition by using Lemmas in Section 4 in the same way as \cite{FKSZ1}. Furthermore, we can also prove it in the same way as \cite{FM1} since this intersection is included in $\Int Z$.
\begin{proposition}
There is an isomorphism 
\[ (H_2(X) , \langle - , - \rangle_X ) \simeq \left( \frac{L_{\gamma} \cap (L_{\alpha} + L_{\beta})}{(L_{\gamma} \cap L_{\alpha}) + (L_{\gamma} \cap L_{\beta})} , \Phi \right). \]
\end{proposition}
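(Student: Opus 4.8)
The plan is to read off the intersection form directly from the handle decomposition $Y$ of Lemma \ref{lemma3.1}, paralleling the identification of $H_2(X)$ with $\ker \rho / \im \pi$ made in the proof of Theorem \ref{theorem1}. Given two classes $\bar x, \bar y \in H_2(X)$, I would first lift them to cycles $x, y \in \ker \rho = L_{\gamma} \cap (L_{\alpha}^{\partial} \cap L_{\beta}^{\partial})^{\bot} = L_{\gamma} \cap (L_{\alpha} + L_{\beta})$, using the equality $(L_{\alpha}^{\partial} \cap L_{\beta}^{\partial})^{\bot} = L_{\alpha} + L_{\beta}$ established in the proof of Corollary \ref{corollary5.1}. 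Each such $x$ is a $\mathbb{Z}$-combination of cores of the $\gamma$-2-handles of $Y$, so I would represent it by a family of disjoint embedded oriented curves $J \subset \Int \Sigma$ with $[J] = x$ lying in the submanifold $M_{\alpha\beta} \subset \partial Y^1 = \partial X_1$, and similarly a family $K$ for $y$, chosen disjoint from $J$. Since $x, y \in L_{\alpha} + L_{\beta} = \ker \iota_{M_{\alpha\beta}}$ by Lemma \ref{lemma4.2} (3), the families $J$ and $K$ are null-homologous in $M_{\alpha\beta}$.

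Next I would build closed surfaces in $\Int X$ representing $\bar x$ and $\bar y$ by capping the associated 2-handle cores with Seifert surfaces of $J$ and $K$ inside $M_{\alpha\beta}$, and compute $\langle \bar x , \bar y \rangle_X$ by the standard handle-theoretic rule recalled before Lemma \ref{lemma4.3}: the pairing of two such surfaces is the (generalized) linking matrix of the framed attaching curves. The off-diagonal entries are the linking numbers $\lk(J, K)_{M_{\alpha\beta}}$, while the diagonal self-pairings are governed by the framings, which by Lemma \ref{lemma3.1} (4) are induced by $\Sigma$ and hence record the self-intersection of a curve on $\Sigma$, namely zero. After fixing the orientation conventions of Section 4, this should yield $\langle \bar x , \bar y \rangle_X = -\lk(J, K)_{M_{\alpha\beta}}$.

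Finally I would convert the linking number into an intersection number on $\Sigma$ via Lemma \ref{lemma4.3}, applied with $M = M_{\alpha\beta}$, $\mu = \alpha$ and $\nu = \beta$. Part (1) produces an element $j \in L_{\alpha}$ with $\langle y' , j \rangle_{\Sigma} = \langle y' , x \rangle_{\Sigma}$ for all $y' \in L_{\beta}^{\partial}$; this says $\langle y', j - x\rangle_\Sigma = 0$ on $L_\beta^\partial$, so $x - j \in (L_{\beta}^{\partial})^{\bot} = L_{\beta}$, and therefore $j$ is a legitimate choice of $x'$ in the definition of $\Phi$. Part (2) then gives $\lk(J, K)_{M_{\alpha\beta}} = \langle j , y \rangle_{\Sigma} = \langle x' , y \rangle_{\Sigma}$, whence $\langle \bar x , \bar y \rangle_X = -\langle x' , y \rangle_{\Sigma} = \Phi(x, y)$. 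Combined with the isomorphism of Corollary \ref{corollary5.1}, this exhibits the claimed isometry; independence of the chosen lifts is automatic because $\Phi$ was already shown to descend to the quotient immediately after its definition.

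The main obstacle is the orientation and sign bookkeeping that identifies the four-dimensional intersection number with the three-dimensional linking number, in particular matching the boundary orientations used in Lemmas \ref{lemma4.2} and \ref{lemma4.3} with the $\Sigma$-induced framings; getting this sign right is exactly what forces the minus sign in the definition of $\Phi$. A secondary point is justifying that the Seifert surfaces and all relevant intersections can be confined to $M_{\alpha\beta}$ and pushed into $\Int X$, so that the computation genuinely localizes there. Alternatively, as the statement indicates, one may run the same argument inside $\Int Z$ using the decomposition $Z \cup W_1 \cup W_2 \cup W_3$, where the representing surfaces already lie in the interior and no boundary pushing-off is required; this is the route of \cite{FM1} and sidesteps part of the sign analysis at the cost of working with the larger complex $C^Z$.
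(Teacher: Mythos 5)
Your overall strategy is exactly the one the paper intends: the paper gives no detailed argument for this proposition, remarking only that it ``can be proven by using Lemmas in Section 4 in the same way as \cite{FKSZ1}'' (or alternatively inside $\Int Z$ as in \cite{FM1}), and your proposal fleshes out precisely that route --- lift classes to $\ker\rho = L_{\gamma}\cap(L_{\alpha}+L_{\beta})$ via Corollary \ref{corollary5.1}, represent them by curve families on $\Sigma$ that are null-homologous in $M_{\alpha\beta}$ by Lemma \ref{lemma4.2}~(3), cap with Seifert surfaces, and convert linking numbers into intersection numbers on $\Sigma$ via Lemma \ref{lemma4.3}. In particular, your identification of the element $j$ from Lemma \ref{lemma4.3}~(1) with a legitimate choice of $x'$ in the definition of $\Phi$, using $(L_{\beta}^{\partial})^{\bot}=L_{\beta}$ from Lemma \ref{lemma4.1}~(3), is correct and is the key step.

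There is, however, one genuine error: the claim that the $\Sigma$-induced framings ``record the self-intersection of a curve on $\Sigma$, namely zero.'' The framing coefficient of $\gamma_i$ is not $\langle[\gamma_i],[\gamma_i]\rangle_{\Sigma}$; it is the self-linking number $\lk(\gamma_i,\gamma_i')_{M_{\alpha\beta}}$ of $\gamma_i$ with its push-off $\gamma_i'$ along $\Sigma$, which by Lemma \ref{lemma4.3} equals $\langle j_{\gamma_i},[\gamma_i]\rangle_{\Sigma}$ and is generally nonzero. The paper itself shows this: Proposition \ref{proposition6.3} identifies the linking matrix, whose diagonal consists of exactly these framing coefficients, as $_{\gamma}Q_{\beta,\partial}\,R^g_{p,b}\,{}^{\alpha}_{\partial}Q_{\gamma}$, and in the final example this matrix is $\left(\begin{smallmatrix}0&-1\\-1&-1\end{smallmatrix}\right)$, with a nonzero diagonal entry. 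Indeed, if the framing coefficients always vanished, the Gompf--Stipsicz cocycle of Theorem \ref{theorem3} would be identically zero, every trisected $4$-manifold would be spin, and every intersection form would be even --- all contradicted by that example, whose form is $(-1)$. Fortunately your argument does not actually need the false claim: when $x$ and $y$ share $\gamma$-curves, choosing the representative $K$ of $y$ to be the $\Sigma$-parallel push-off of the shared part of $J$ is precisely the framing push-off, so $\lk(J,K)_{M_{\alpha\beta}}$ already incorporates the correct, generally nonzero, diagonal contributions, and Lemma \ref{lemma4.3}~(2) then yields $\langle \bar x,\bar y\rangle_X=-\langle x',y\rangle_{\Sigma}=\Phi(x,y)$ uniformly, including for $\bar x=\bar y$. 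Delete the ``namely zero'' justification, state explicitly that pushing off along $\Sigma$ realizes the framing, and your proof is sound.
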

\vspace{1ex}

\section{Representative of the Stiefel-Whitney class}
As an application of chain complexes $C^Y$ and $C^Z$, we describe representatives of the second Stiefel-Whitney class by using the matrices $_{\mu}Q_{\nu}$. Since we can convert a trisection diagram to a Kirby diagram via handlebodies, the following theorem shown by Gompf and Stipsicz \cite{GS1} plays an important role to describe the representative.
\begin{theorem*}{(Gompf-Stipsicz \cite{GS1})}
For an oriented handlebody $X$ given by a Kirby digram in dotted circle notation, $w_2(X) \in H^2(X ; \mathbb{Z}_2)$ is represented by the cocycle $c \in C^2(X ; \mathbb{Z}_2)$ whose value on each 2-handle is its framing coefficient modulo 2.
\end{theorem*}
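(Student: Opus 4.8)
The plan is to prove this by obstruction theory, interpreting $w_2(X)$ as the obstruction to trivializing the tangent bundle $TX$ over the $2$-skeleton. Since $X$ is built from $0$-, $1$- and $2$-handles only, it is homotopy equivalent to a $2$-complex: the $0$-handle together with the dotted circles gives a $1$-skeleton $X^{(1)}$ homotopy equivalent to a wedge of circles, and each $2$-handle contributes a single $2$-cell. In particular $C^3(X;\mathbb{Z}_2)=0$, so every $2$-cochain is automatically a cocycle and $H^2(X;\mathbb{Z}_2)=C^2(X;\mathbb{Z}_2)/\im\delta$; thus it suffices to identify the cohomology class of $c$, and no cocycle condition needs to be checked.

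First I would fix a trivialization. As $TX$ is oriented, $w_1(TX)=0$, so the first obstruction vanishes and $TX$ trivializes over $X^{(1)}$ (an oriented $\mathbb{R}^4$-bundle over a $1$-complex is trivial); fix such a trivialization $\tau$. The second, and here only, obstruction to extending $\tau$ across the $2$-cells is the primary obstruction with coefficients in $\pi_1(SO(4))\cong\mathbb{Z}_2$, and by the standard identification this obstruction class is precisely $w_2(X)$. The associated obstruction cocycle $\mathfrak{o}\in C^2(X;\mathbb{Z}_2)$ assigns to each $2$-handle $h$ the class in $\pi_1(SO(4))$ of the clutching loop obtained by comparing $\tau$ along the attaching circle $K$ with the trivialization of $TX$ over the contractible core disk of $h$. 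My goal is therefore to show that $\mathfrak{o}(h)$ equals the framing coefficient of $h$ modulo $2$.

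The key local computation is the next step. Along $K$ I would split $TX|_K \cong T(\text{core})|_K \oplus N$, where $N$ is the rank-$2$ bundle normal to the core disk in $X$; the attaching framing of $h$ is exactly a trivialization of $N|_K$, and it is this framing that governs how $\tau$ and the core-extended trivialization differ as one traverses $K$. Two inputs then finish the argument: first, changing the framing by $+1$ alters the clutching loop by the image of a generator under $\pi_1(SO(2))\to\pi_1(SO(4))$, which is reduction modulo $2$, so $\mathfrak{o}(h)$ depends on the framing only through its parity; and second, a normalization computed from an explicit model — the $0$-framed $2$-handle on an unknot yields $S^2\times D^2$, which is spin and hence carries vanishing obstruction — pins the additive constant to $0$, giving $\mathfrak{o}(h)\equiv(\text{framing of }h)\pmod 2$. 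The role of dotted-circle notation enters precisely here: the $1$-handles carry no framing data and lie in $X^{(1)}$, over which $\tau$ already trivializes $TX$, so they contribute nothing to the obstruction and only the $2$-handles appear in the formula.

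The hard part will be this local identification of the clutching loop with the attaching framing: one must track orientations carefully through the splitting $TX|_K=T(\text{core})|_K\oplus N$, verify that it is exactly the $SO(2)$-rotation recorded by the framing that maps forward into $\pi_1(SO(4))$, and confirm the reduction-mod-$2$ behavior together with the correct normalizing base case. Everything else — the homotopy type of $X$, the vanishing of $C^3$, and the identification of the primary obstruction with $w_2$ — is standard obstruction theory.
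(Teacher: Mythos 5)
The paper offers no proof of this statement at all --- it is imported verbatim from Gompf--Stipsicz \cite{GS1} as a black box --- so your proposal can only be judged on its own merits. Your framework is the right one and is essentially the standard obstruction-theoretic argument underlying the textbook's treatment: $X$ deformation retracts to a $2$-complex, so $C^3(X;\mathbb{Z}_2)=0$ and every $2$-cochain is a cocycle; orientability trivializes $TX$ over the $1$-skeleton; the primary obstruction to extending the trivialization over the $2$-cells lives in $H^2(X;\pi_1(SO(4)))\cong H^2(X;\mathbb{Z}_2)$ and equals $w_2$; and changing a handle's framing by $1$ changes its clutching class by the image of the generator under the surjection $\pi_1(SO(2))\to\pi_1(SO(4))\cong\mathbb{Z}_2$. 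All of this is sound.

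The genuine gap is in your normalization step. Computing the model case of the $0$-framed unknot pins the ``additive constant'' only for the unknot; your argument tacitly assumes that $\mathfrak{o}(K,0)=0$ for \emph{every} attaching circle $K$, i.e., that the constant is independent of the knot type and of how $K$ runs over the dotted circles. This is exactly where the content of the theorem sits, and it is not automatic: the clutching loop contains the tangential winding of $K$ relative to $\tau$, which is knot-dependent (in a blackboard-type trivialization it is the rotation number of a diagram of $K$, whereas the $0$-framing is defined homologically by the linking number of $K$ with its pushoff, i.e., by the writhe; the two differ in a knot-dependent way, and only the Whitney relation $\mathrm{rot}\equiv \mathrm{writhe}+1 \pmod 2$ rescues the parity). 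To close the gap you need one of the standard supplements: either (i) extend the comparison of trivializations over a Seifert surface for the $0$-framed pushoff, or (ii) invoke Hirsch--Smale to note that all immersed circles in $S^3$ are regularly homotopic, that the clutching class is a regular-homotopy invariant of the framed curve, and that a self-crossing change shifts the Seifert-relative framing coefficient by $\pm 2$ and hence preserves parity --- together with the unknot model this gives $\mathfrak{o}(K,f)\equiv f \pmod 2$ uniformly. Separately, to get the \emph{cocycle}-level identification (rather than agreement up to an unexamined coboundary when attaching circles run over $1$-handles), you should fix $\tau$ concretely: in dotted-circle notation the $1$-handlebody is $D^4$ with neighborhoods of slice disks carved out, so the constant trivialization of $TD^4$ restricts to the whole $1$-skeleton, and with this choice the framing coefficients in the statement are measured exactly as linking numbers in $S^3$, with no corrections from the dotted circles.
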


In regard to the trisection, we have:
\begin{theorem}
Suppose that $(\alpha,\beta)$ is diffeomorphism equivalent to $(\delta^{k_1} , \epsilon^{k_1})$. Then, as a representative of the second Stiefel-Whitney class $w_2$, we can obtain $c \colon L_{\gamma} \to \mathbb{Z}_2$ defined by the following formula with respect to the $\gamma$-basis
\[ c(x) = \sum_{i=1}^{g-p} \left( _{\gamma}Q_{\beta , \partial} \, \, R^g_{p,b} \hspace{1ex} \, ^{\alpha}_{\partial}Q_{\gamma} \right)_{i \, i} \,  x_i \pmod2 . \]
\end{theorem}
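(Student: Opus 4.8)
The plan is to apply the Gompf--Stipsicz theorem to the handle decomposition $X = Y \cup V_2 \cup V_3$ of Lemma~\ref{lemma3.1}. By Lemma~\ref{lemma3.1}(3)--(4) the handlebody $Y$ has its $2$-handles attached along the curves $\gamma$ with the framing induced by $\Sigma$, so after converting $Y$ into dotted-circle (Kirby) notation the Gompf--Stipsicz theorem represents $w_2(X) = w_2(Y)$ by the $2$-cochain whose value on the $2$-handle of $\gamma_i$ is the framing coefficient of $\gamma_i$ modulo $2$. Under the isomorphism $b_2 \colon C_2 \to L_\gamma$ from the proof of Theorem~\ref{theorem1}, which carries the $i$-th $2$-handle to $[\gamma_i]$, a $2$-cochain is precisely a map $L_\gamma \to \mathbb{Z}_2$; hence it suffices to show that the framing coefficient of $\gamma_i$ agrees modulo $2$ with the $(i,i)$-entry of $_\gamma Q_{\beta,\partial}\,R^g_{p,b}\,{}^\alpha_\partial Q_\gamma$.

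First I would interpret the framing coefficient as a self-linking number. Pushing $\gamma_i$ off itself along $\Sigma$ to a parallel copy $\gamma_i^{+}$, its framing coefficient is the linking number $\lk(\gamma_i,\gamma_i^{+})$ measured in $\partial X_1$; transporting this into the sutured manifold $M_{\alpha\beta}$ by the homomorphism $b$ of Lemma~\ref{lemma4.4} (exactly as $b_1$ is used in the proof of Theorem~\ref{theorem1}), I would then apply Lemma~\ref{lemma4.3} with $\mu = \alpha$, $\nu = \beta$. That lemma produces an element $j_i \in L_\alpha$ determined by $\langle y , j_i \rangle_\Sigma = \langle y , [\gamma_i] \rangle_\Sigma$ for every $y \in L_\beta^\partial$, together with the identity $\lk(\gamma_i,\gamma_i^{+}) = \langle j_i , [\gamma_i] \rangle_\Sigma$. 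Thus the framing coefficient equals $\langle j_i , [\gamma_i] \rangle_\Sigma$, where $j_i \in L_\alpha$ is specified, modulo $L_\alpha \cap L_\beta$, by its intersection numbers against the basis $\{[\beta_1],\dots,[\beta_{g-p}],[a_1],\dots,[a_l]\}$ of $L_\beta^\partial$ furnished by Lemma~\ref{lemma4.1}(2).

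The remaining step is the matrix translation. Using the hypothesis that $(\alpha,\beta)$ is diffeomorphism equivalent to $(\delta^{k_1},\epsilon^{k_1})$, I would fix the standard arcs $a$ and the resulting bases $\{[\alpha],[a]\}$ of $L_\alpha^\partial$ and $\{[\beta],[a]\}$ of $L_\beta^\partial$ described before the statement. The functional recording $\langle \gamma_i , - \rangle_\Sigma$ on $L_\beta^\partial$ is (up to an overall sign, irrelevant modulo $2$) the $i$-th row of $_\gamma Q_{\beta,\partial} = (\,{}_\gamma Q_\beta \ \ {}_\gamma Q_a\,)$, and pairing the recovered $j_i$ with $[\gamma_i]$ is recorded by the column block $^\alpha_\partial Q_\gamma$. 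It then remains to identify $R^g_{p,b}$ as the operator that solves the defining linear system for $j_i$ on the standard diagram, that is, as a pseudo-inverse of the intersection pairing between the bases $\{[\beta],[a]\}$ and $\{[\alpha],[a]\}$: the block $I_{g-p}$ arises from the $g-p$ pairs $(\alpha_k,\beta_k)$ meeting once on the genus-$(g-p)$ piece of $\Sigma$, the $p$ copies of $\bigl(\begin{smallmatrix} 0 & 0 \\ 1 & 0 \end{smallmatrix}\bigr)$ from the dual pairs of arcs on the genus-$p$ subsurface $P$, and $O_{b-1}$ from the boundary-parallel arcs, which pair trivially. Multiplying out and reading off the $(i,i)$-entries for $i = 1,\dots,g-p$ then yields the asserted formula.

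I expect the principal obstacle to be the combinatorial verification that $R^g_{p,b}$ is exactly the correct pseudo-inverse for the standard diagram of Figure~1 --- pinning down the placement of the off-diagonal entries and the signs so that only the diagonal contributions survive and reproduce the framings. A secondary technical point is to match the Seifert framing implicit in the Gompf--Stipsicz theorem with the linking number of Lemma~\ref{lemma4.3}: this requires checking the hypotheses of that lemma after the transport of Lemma~\ref{lemma4.4} (in particular that $[\gamma_i]$ lies in $(L_\alpha^\partial \cap L_\beta^\partial)^{\bot} = L_\alpha + L_\beta$, as identified in the proof of Corollary~\ref{corollary5.1}, so that $\langle j_i,[\gamma_i]\rangle_\Sigma$ is independent of the choice of $j_i$), or else arguing that the residual indeterminacy is even and hence invisible modulo $2$.
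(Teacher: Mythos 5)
Your setup (apply Gompf--Stipsicz to the decomposition of Lemma~\ref{lemma3.1}, identify $2$-cochains with maps $L_{\gamma} \to \mathbb{Z}_2$ via the $\gamma$-basis, and convert framing coefficients into intersection data via Lemma~\ref{lemma4.3}) is the paper's setup, but the central computational step has a genuine gap. In dotted-circle notation the framing coefficient of $\gamma_i$ is a linking number measured in $S^3$, i.e.\ in the manifold obtained from $\partial X_1$ by \emph{erasing} (surgering out) the dotted circles; it is not $\lk(\gamma_i,\gamma_i^{+})$ measured in $\partial X_1$, and it is not a linking number in $M_{\alpha\beta}$. Indeed the quantity you propose to compute generally does not exist: Lemma~\ref{lemma4.3} (and the definition of linking number via Seifert surfaces) requires $\iota_{M_{\alpha\beta}}([\gamma_i])=0$, i.e.\ $[\gamma_i]\in L_{\alpha}+L_{\beta}=(L_{\alpha}^{\partial}\cap L_{\beta}^{\partial})^{\bot}$, and this fails whenever $\rho([\gamma_i]) \neq 0$. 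It fails already in the paper's own example (Figure 5): there $L_{\alpha}^{\partial}\cap L_{\beta}^{\partial}$ is spanned by $[a_1]$ and $\langle [a_1],[\gamma_1]\rangle_{\Sigma}=1\neq 0$, so $\gamma_1$ is not null-homologous in $M_{\alpha\beta}$, yet its framing coefficient (the $(1,1)$-entry $0$ of the linking matrix) is perfectly well defined. So this is not, as you suggest at the end, a ``residual indeterminacy'' that could be even and vanish modulo $2$: the problem is non-existence, not non-uniqueness, and it occurs precisely for the diagrams where the first boundary map of $C^Y$ is nontrivial. A further mismatch: Lemma~\ref{lemma4.4} transports pairings of $H_2$-classes (belt spheres) against curves, which is how $b_1$ enters the proof of Theorem~\ref{theorem1}; it is not a device for transporting self-linking numbers.

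The paper's proof repairs exactly this point, and the repair is where $R^g_{p,b}$ actually comes from. Erasing the dotted circles $\alpha_i$ and $\partial(a_i\times D^1)$ turns $\partial X_1$ into $S^3$, presented by a Heegaard splitting $C_{\alpha}\cup H$ in which the compressing system of the $1$-handlebody $H$ is not $\{\beta,\partial(a_i\times[-1,0])\}$ but the modified system $\{\beta^l, a^*_1\times\{0\}, a^*_2\times\{-1\},\dots\}$. In $S^3$ every curve is null-homologous, so the \emph{closed} version of Lemma~\ref{lemma4.3} applies unconditionally to this splitting, producing $j_{\gamma_i}\in L_{\alpha}^{\partial}$ with $c(\gamma_i)=-\langle j_{\gamma_i},\gamma_i\rangle_{\Sigma}$; the nilpotent blocks of $R^g_{p,b}$ record precisely the correction terms created by this change of system (in $\partial C_{\alpha}$ one has $[a^*_{2k}]=[a^*_{2k}\times\{-1\}]+[\partial(a_{2k-1}\times[-1,0])]$, which is why the coefficient of $[a^*_{2k}]$ in $\gamma_i$ contributes $[a_{2k-1}]$ to $j_{\gamma_i}$). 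Your guess that $R^g_{p,b}$ is a pseudo-inverse of the pairing between $\{[\beta],[a]\}$ and $\{[\alpha],[a]\}$ is in the right spirit, but the linear system you propose to solve inside $M_{\alpha\beta}$ has no solution in the cases above; only after passing to $S^3$ does the defining system for $j_{\gamma_i}$ become solvable, and only then does the multiplication $_{\gamma}Q_{\beta,\partial}\, R^g_{p,b}\;{}^{\alpha}_{\partial}Q_{\gamma}$ compute the framings.
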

\begin{proof}
Let $a$ be a family of arcs in $\Sigma$ as in the proof of Corollary \ref{corollary5.1}. From the assumption and Lemma \ref{lemma3.1}, we can draw a Kirby diagram based on the trisection diagram as follows: $\Sigma \times D^1$ is embedded in $\mathbb{R}^3 \subset S^3$, where $S^3$ is the boundary of the 0-handle. Dotted circles for 1-handles are $\partial(a_i \times D^1)$ and $\alpha_i$ which is parallel to $\beta_i$. Attaching circles of 2-handles are $\gamma$, and its framings are induced by $\Sigma$. $\partial X_1$ is a union of $C_{\alpha}$, $C_{\beta}$ and $\overline{\partial(P \times D) \setminus P \times \partial^{\pm}D}$. Since $P \times \partial^-D \, \cup \, (P \times \partial^+D \cup \overline{\partial(P \times D) \setminus P \times \partial^{\pm}D})$ is a Heegaard splitting, we can construct $\partial X_1$ from $C_{\alpha}$ by attaching a 3-dimensional 1-handlebody whose system consists of $\beta$ and $\partial(a_i \times [-1,0])$ along $\partial C_{\alpha}$. Note that $\partial C_{\alpha}$ is diffeomorphic to $\Sigma_{g+p+b-1}$, and $C_{\alpha}$ itself is a 1-handlebody whose system consists of $\alpha$ and $\partial(a_i \times [-1,0])$.

Applying the above Theorem to this Kirby diagram, we obtain the representative $c \in C^2(X;\mathbb{Z})$. When we calculate a framing (or a linking number), we can ignore dotted circles for 1-handles to regard $\partial X_1 \subset S^3$. This is equivalent to changing the system $\{\partial C_{\alpha}; \beta_1, \dots , \beta_{g-p}, \partial(a_1 \times [-1,0] ), \dots , \partial(a_l \times [-1,0])\}$ into $\{\partial C_{\alpha}; \beta_1^l, \dots , \beta_{g-p}^l, a^*_1 \times \{0\}, a^*_2 \times \{-1\}, \dots , a^*_{2p-1} \times \{0\}, a^*_{2p}\times \{-1\}, a^*_{2p+1} \times \{0\}, \dots , a^*_l \times \{0\} \}$, where $\beta^l$ is a family of curves which maps to $\epsilon^l$ by the diffeomorphism. There is no need to change $C_{\alpha}$.

If we apply Lemma \ref{lemma4.3} to this decomposition of $S^3$, there is $j_{\gamma}$ for an attaching circle $\gamma$ such that $c(\gamma) = -\langle j_{\gamma}, \gamma \rangle_{\partial C_{\alpha}}$. Since $\{[\alpha], [\beta^l], [a^*]\}$ is a basis of $H_1(\Sigma)$, $\gamma$ is expressed as $\sum_i (x_i [\alpha_i] + y_i [\beta_i^l]) + \sum_j z_j [a_j^*]$. In $\partial C_{\alpha}$, $[a^*_{2k-1}]$ corresponds to $[a^*_{2k-1} \times \{0\}]$, and $[a^*_{2k}]$ does to $[a^*_{2k} \times \{-1\}] + [\partial (a_{2k-1} \times [-1,0])]$ for $1 \le k \le p$. Since we can obtain $j_{\gamma}$ as an element which satisfies $\gamma - j_{\gamma} \in \langle [\beta^l], [a^*_{2k-1} \times \{0\}], [a^*_{2k} \times \{-1\}], a^*_i \times \{0\} \rangle$, $j_{\gamma}$ is expressed as $\sum_i x_i [\alpha_i] + \sum_k z_{2k} [\partial(a_{2k-1} \times [-1,0])]$. Regarding  $j_{\gamma}$ as an element of $L_{\alpha}^{\partial}$, we have
\[j_{\gamma} = \sum_i x_i [\alpha_i] + \sum_k z_{2k} [a_{2k-1}]. \]
For this $j_{\gamma}$, we have $c(\gamma) = -\langle j_{\gamma}, \gamma \rangle_{\Sigma}$ because of $j_{\gamma} \cap \gamma \subset \Sigma$.

Let $\alpha^{\partial}$ be a basis $\{[\alpha], [a]\}$ of  $L_{\alpha}^{\partial}$. Note that we orient them to satisfy $\langle [\beta^l_i], [\alpha_j] \rangle_{\Sigma} = \langle [a_i] , [a^*_j] \rangle_{\Sigma} = \delta_{ij}$. In regard to this basis $\alpha^{\partial}$, we obtain $j_{\gamma_i} = \sum_j ( \transpose{R^g_{p,b}} \,\, ^{\beta}_{\partial}Q_{\gamma})_{ji} \, [\alpha^{\partial}_j]$.
Since the coefficient of $[\alpha^{\partial}_j]$ is equal to $-(_{\gamma}Q_{\beta , \partial} \, R^g_{p,b})_{ij}$, this equality can be replaced by
\[j_{\gamma_i} = -\sum_j (_{\gamma}Q_{\beta , \partial} \, R^g_{p,b})_{ij} \, [\alpha^{\partial}_j]. \]
For $x = \sum_i x_i [\gamma_i] \in L_{\gamma}$, $c(x)$ is $- \sum_i \langle j_{\gamma_i} , [\gamma_i] \rangle_{\Sigma} \, x_i$. Therefore, we can calculate $c(x)$ as follows:
\begin{align}
c(x) & = \sum_i \sum_j (_{\gamma}Q_{\beta , \partial} \, R^g_{p,b})_{ij} \, \langle [\alpha^{\partial}_j] , [\gamma_i] \rangle_{\Sigma} \, x_i,  \notag \\
      & = \sum_i \left( _{\gamma}Q_{\beta , \partial} \, R^g_{p,b} \hspace{0.7ex} ^{\alpha}_{\partial}Q_{\gamma} \right)_{i  i}   x_i. \notag \qedhere
\end{align}
\end{proof}

\begin{theorem}
As a representative of the second Stiefel-Whitney class, we can obtain $c' \colon L_{\alpha} \oplus L_{\beta} \oplus L_{\gamma} \to \mathbb{Z}$ defined by the following formula with respect to the $\alpha$-, $\beta$-, $\gamma$-basis
\[c'(x) = \sum_{i=1}^{3(g-p)} \left( _{\nu}Q_a \, \,  S_{g,b} \, \hspace{0.7ex} _aQ_{\nu} \right)_{i \, i} x_i \pmod{2}. \]
\end{theorem}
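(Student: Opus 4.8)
The plan is to run the same Kirby-calculus argument as in the proof of Theorem \ref{theorem3}, but starting from the handle decomposition $X = Z \cup W_1 \cup W_2 \cup W_3$ built from $\Sigma \times D^2$ rather than from $Y$. By that decomposition $Z$ has a single $0$-handle, $2g+b-1$ one-handles, and $3(g-p)$ two-handles whose attaching circles are precisely the curves $\alpha, \beta, \gamma$, each carrying the framing induced by $\Sigma$; moreover $Z^1 \cong \Sigma \times D^2 \cong \natural^{2g+b-1} S^1 \times D^3$. First I would draw the corresponding Kirby diagram in dotted-circle notation, exactly as for $Y$: embed $\Sigma \times D^1$ standardly in $\mathbb{R}^3 \subset S^3$, take the $2g+b-1$ dotted circles to be $\partial(a_j \times D^1)$ for a family of arcs $a$ forming a basis of $H_1(\Sigma, \partial \Sigma)$ (the cores of the one-handles of $\Sigma$), and take $\alpha, \beta, \gamma$ as the surface-framed two-handle attaching circles. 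Applying the theorem of Gompf and Stipsicz recalled above, a representative of $w_2$ is the cochain $c'$ whose value on the two-handle indexed by $\nu_i \in \{\alpha, \beta, \gamma\}$ is the framing coefficient of $\nu_i$ modulo $2$; since $C^Z$ has $C_2^Z = L_{\alpha} \oplus L_{\beta} \oplus L_{\gamma}$ generated by these two-handles, it then suffices to identify each framing coefficient with the diagonal entry $({}_{\nu}Q_a\, S_{g,b}\, {}_aQ_{\nu})_{ii}$.

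The heart of the argument is the framing computation, and here the decomposition from $\Sigma \times D^2$ simplifies matters relative to Theorem \ref{theorem3}: because all three families lie directly on $\Sigma$, the relevant quantity is a pure \emph{self}-framing rather than a mixed linking number. As in Theorem \ref{theorem3}, when computing a framing coefficient we may ignore the dotted circles and regard $\Sigma$ as standardly embedded in $S^3$; the framing coefficient of the surface-framed curve $\nu_i$ is then the self-linking $\lk(\nu_i, \nu_i^+)$ of $\nu_i$ with its surface pushoff. I would observe that for a surface standardly embedded in $S^3$ the assignment $x \mapsto \lk(x, x^+) \bmod 2$ is a quadratic refinement $q \colon H_1(\Sigma; \mathbb{Z}_2) \to \mathbb{Z}_2$ of the intersection form, so in particular it depends only on the mod-$2$ class of $\nu_i$, which is all that matters for $w_2$. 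Choosing the arcs $a$ so that the dual basis $\{a_j^*\}$ of $H_1(\Sigma)$ defined by $\langle a_j^*, a_k \rangle_{\Sigma} = \delta_{jk}$ is a standard symplectic-plus-boundary basis adapted to the embedding (as constructed in the proof of Lemma \ref{lemma4.1}, with $\langle a_{2m-1}^*, a_{2m}^* \rangle_{\Sigma} = 1$), each basis class bounds in the standard embedding, so $q$ vanishes on the basis and the quadratic-refinement identity $q(\sum_j c_j a_j^*) = \sum_{j<k} c_j c_k \langle a_j^*, a_k^* \rangle_{\Sigma}$ collapses to $\sum_{m=1}^{g} c_{2m-1} c_{2m} \bmod 2$, the boundary block $O_{b-1}$ contributing nothing.

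It remains to match this with the matrix product. Writing $c_j = \langle \nu_i, a_j \rangle_{\Sigma} = ({}_{\nu}Q_a)_{ij}$ for the coordinates of $[\nu_i]$, and using ${}_aQ_{\nu} = -{}^t({}_{\nu}Q_a)$ together with the fact that the only nonzero entries of $S_{g,b}$ are the $(2m,2m-1)$-entries, equal to $1$, one computes
\[ ({}_{\nu}Q_a\, S_{g,b}\, {}_aQ_{\nu})_{ii} = -\sum_{j,k} c_j\,(S_{g,b})_{jk}\,c_k = -\sum_{m=1}^{g} c_{2m-1} c_{2m} \equiv \sum_{m=1}^{g} c_{2m-1} c_{2m} = q([\nu_i]) \pmod 2. \]
Hence $c'(x) = \sum_i ({}_{\nu}Q_a\, S_{g,b}\, {}_aQ_{\nu})_{ii}\, x_i$ evaluates the Gompf--Stipsicz cocycle on $x = \sum_i x_i [\nu_i]$, which is the asserted representative of $w_2$.

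The step I expect to be the main obstacle is the geometric claim of the second paragraph: that the surface framing arising from the standard embedding $\Sigma \times D^1 \subset \mathbb{R}^3$ realizes exactly the quadratic refinement $q$ represented by $S_{g,b}$ in the chosen basis — that $q$ vanishes on each standard basis curve, that the off-diagonal contribution is precisely the triangular half of the intersection form recorded by $S_{g,b}$, and that the boundary classes contribute nothing. This requires pinning down the standard embedding carefully enough to read off the self-framings (as was done implicitly for $Y$ in Theorem \ref{theorem3}); alternatively, one can bypass the explicit quadratic-refinement language and derive the same self-framing by applying Lemma \ref{lemma4.3} to $\nu_i$ and its surface pushoff, in direct parallel with the linking-number computation there.
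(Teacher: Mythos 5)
You have the paper's skeleton exactly: the dotted-circle Kirby diagram coming from the decomposition $X = Z \cup W_1 \cup W_2 \cup W_3$ (dotted circles $\partial(a_j\times D^1)$, surface-framed $2$-handles along $\alpha,\beta,\gamma$), the Gompf--Stipsicz cocycle, and the reduction to computing each framing coefficient after discarding the dotted circles. Where you genuinely differ is in that framing computation, which is the heart of the proof. The paper does it \emph{integrally}: having thrown away the dotted circles, it replaces one of the two identical compression systems $\{\partial(a_i\times D^1)\}$ for the splitting of $\partial Z^1$ by the system $\{a_1^*\times\{1\},\,a_2^*\times\{-1\},\dots\}$ of the complementary handlebody in $S^3$ --- this alternation of levels is exactly where $S_{g,b}$, the Seifert matrix of the standard embedding, enters --- and then applies Lemma \ref{lemma4.3} to produce $j_{\nu_i}=-\sum_j({}_{\nu}Q_a\,S_{g,b})_{ij}[a_j]$ and to read off the framing as $-\langle j_{\nu_i},[\nu_i]\rangle_{\Sigma}$; the same computation yields the integral linking matrix ${}_{\nu}Q_a\,S_{g,b}\,{}_aQ_{\nu}$ of Proposition \ref{proposition6.3}, of which the theorem is the mod-$2$ diagonal. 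You instead work mod $2$ from the outset via the quadratic refinement $q(x)=\lk(x,x^{\Sigma})\bmod 2$: this is sound, since $q$ is the diagonal of the Seifert form and hence well defined on $H_1(\Sigma;\mathbb{Z}_2)$, it vanishes on the basis $\{a_j^*\}$ because each such curve bounds a disk in the complement of the standard embedding (and a disk whose interior misses $\Sigma$ has zero algebraic intersection with the surface pushoff), and your expansion $\sum_m c_{2m-1}c_{2m}$ does agree with $({}_{\nu}Q_a\,S_{g,b}\,{}_aQ_{\nu})_{ii}$ mod $2$, since ${}_aQ_{\nu}=-{}^t({}_{\nu}Q_a)$ and the signs cancel in the quadratic expression. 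The trade-off: the paper's route simultaneously proves the integral statement of Proposition \ref{proposition6.3}, which your mod-$2$ argument cannot recover, while your route avoids the alternating-levels bookkeeping --- though certifying that $q$ vanishes on the basis is the same geometric input as certifying that $S_{g,b}$ has zero diagonal, so nothing comes for free. Your fallback suggestion at the end (apply Lemma \ref{lemma4.3} to $\nu_i$ and its surface pushoff) is precisely the paper's proof.
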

\begin{proof}
Let $a$ and $a^*$ be families of curves in $\Sigma$ as in the Figure 4. We can draw a Kirby diagram as in the proof of Theorem \ref{theorem3}. The main difference is that dotted circles for 1-handles are only $\partial (a_i \times D^1)$. Therefore, when we split $\partial Z^1$ to two 1-handlebodies, their systems are the same $\{\partial(\Sigma \times D^1); \partial(a_1 \times D^1), \dots , \partial(a_{2g+b-1} \times D^1)\}$. We should change the one of the two systems into $\{\partial(\Sigma \times D^1); a^*_1 \times \{1\}, a^*_2 \times \{-1\}, \dots , a^*_{2g-1} \times \{1\}, a^*_{2g} \times \{-1\}, a^*_{2g+1} \times \{1\}, \dots , a^*_{2g+b-1} \times \{1\} \}$ to calculate a framing. 

Note that each $\{[a]\}$ and $\{[a^*]\}$ is a basis of $H_1(\Sigma, \partial \Sigma)$ and $H_1(\Sigma)$, respectively, and these satisfy $\langle [a_i], [a^*_j] \rangle_{\Sigma} = \delta_{ij}$. For $\nu_i$, we obtain $j_{\nu_i} = \sum_j (\transpose{S_{g,b}} \, _{a}Q_{\nu})_{ji} [a_j]$ regarding a basis $\{[a]\}$. Moreover, by simple calculation, this equality is equivalent to
\[ j_{\nu_i} = -\sum_j (_{\nu}Q_a \, S_{g,b})_{ij} \, [a_j]. \]
For $x = \sum_i x_i [\nu_i] \in L_{\alpha} \oplus L_{\beta} \oplus L_{\gamma}$, we can calculate $c(x)$:
\begin{align}
c(x) & = - \sum_i \langle j_{\nu_i} , [\nu_i] \rangle_{\Sigma} \, x_i,   \notag  \\
      & = \sum_i (_{\nu}Q_a \, S_{g,b} \, _{a}Q_{\nu})_{ii} \, x_i.  \notag \qedhere
\end{align}
\end{proof}

The second Stiefel-Whitney class is strongly related to the existence of spin structure of a 4-manifold $X$. Therefore, as corollaries, we have:
\begin{corollary}
Suppose that $(\alpha,\beta)$ is diffeomorphism equivalent to $(\delta^{k_1} , \epsilon^{k_1})$. Then, a trisected 4-manifold $X$ admits a spin structure if and only if there exists $d \in L^{\partial}_{\alpha} \cap L^{\partial}_{\beta}$ such that $\langle d , x \rangle_{\Sigma} = c(x) \pmod{2}$ for all $x \in L_{\gamma}$.
\end{corollary}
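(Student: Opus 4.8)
The plan is to translate the existence of a spin structure into the vanishing of $w_2$, to read that vanishing off the $\mathbb{Z}_2$-cochain complex dual to $C^Y$, and finally to identify the relevant cochains with intersection pairings against classes $d \in L_\alpha^\partial \cap L_\beta^\partial$.

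First I would recall that $X$ admits a spin structure if and only if $w_2(X) = 0$ in $H^2(X;\mathbb{Z}_2)$. Since the deformation-retract lemma of Section 3 gives $X \simeq Y$ and the proof of Theorem \ref{theorem1} exhibits $C^Y$ as the cellular chain complex of $Y$ up to chain isomorphism, the group $H^2(X;\mathbb{Z}_2)$ is computed by the dual complex $\Hom(C^Y_\ast, \mathbb{Z}_2)$. In degree $1 \to 2$ the coboundary is $\Hom(\rho, \mathbb{Z}_2)\colon \phi \mapsto \phi \circ \rho$, where $\rho(x) = \langle -, x\rangle_\Sigma$. By Theorem \ref{theorem3} the cochain $c \in \Hom(L_\gamma, \mathbb{Z}_2)$ is a cocycle representing $w_2$. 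Hence $w_2 = 0$ is equivalent to $c$ being a coboundary, i.e.\ to the existence of $\phi \in \Hom(\Hom(L_\alpha^\partial \cap L_\beta^\partial, \mathbb{Z}), \mathbb{Z}_2)$ with $c = \phi \circ \rho$; spelled out, $c(x) = \phi(\langle -, x\rangle_\Sigma)$ for all $x \in L_\gamma$.

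The second step is to recognize every such $\phi$ as the mod-$2$ reduction of an evaluation map. Writing $F = L_\alpha^\partial \cap L_\beta^\partial$, Lemma \ref{lemma4.2}(1) identifies $F$ with $H_2(M_{\alpha\beta}, \partial M_{\alpha\beta})$, so $F$ is finitely generated and free. The natural map $F \to F^{\ast\ast}$ is then an isomorphism, and because $F^\ast$ is free one has $\Hom(F^\ast, \mathbb{Z}_2) \cong F^{\ast\ast} \otimes \mathbb{Z}_2 \cong F/2F$, the isomorphism sending $d \in F$ to the functional $\psi \mapsto \psi(d) \bmod 2$. Thus each $\phi$ equals $\psi \mapsto \psi(d) \bmod 2$ for some $d \in F$, and composing with $\rho$ gives $\phi(\langle -, x\rangle_\Sigma) = \langle d, x\rangle_\Sigma \bmod 2$. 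Therefore $c = \phi \circ \rho$ is exactly the condition that $\langle d, x\rangle_\Sigma \equiv c(x) \pmod 2$ for all $x \in L_\gamma$, and reading the chain of equivalences in both directions proves the corollary.

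The main obstacle I expect is not conceptual but a matter of making the duality bookkeeping airtight: I must confirm that $c$ is genuinely a cocycle of $\Hom(C^Y_\ast, \mathbb{Z}_2)$ (immediate once it is known to represent a cohomology class) and, more importantly, that $F = L_\alpha^\partial \cap L_\beta^\partial$ is free, so that the double-dual identification $\Hom(F^\ast, \mathbb{Z}_2) \cong F/2F$ holds. Once that freeness is secured via Lemma \ref{lemma4.2}(1), the surjectivity of $d \mapsto (\psi \mapsto \psi(d) \bmod 2)$ is formal and the remaining equivalences are purely algebraic.
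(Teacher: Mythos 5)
Your proposal is correct and follows essentially the same route as the paper: both identify the first cochain group of $C^Y$ with $L_\alpha^\partial \cap L_\beta^\partial$ via the double-dual of the finitely generated free module $\Hom(L_\alpha^\partial \cap L_\beta^\partial,\mathbb{Z})$, observe that the coboundary dual to $\rho$ sends $d$ to $\langle d, -\rangle_\Sigma$, and then invoke Theorem \ref{theorem3} to reduce vanishing of $w_2$ to $c$ being a coboundary. Your version merely spells out the mod-$2$ bookkeeping ($\Hom(F^\ast,\mathbb{Z}_2)\cong F/2F$) that the paper leaves implicit.
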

\begin{proof}
The first cochain group $(C^Y)^1$ is canonically isomorphic to $L_{\alpha}^{\partial} \cap L_{\beta}^{\partial}$. Let $\phi \colon L_{\alpha}^{\partial} \cap L_{\beta}^{\partial} \to (C^Y)^1$ be this isomorphism. We define another coboundary map $d'^1 \colon L_{\alpha}^{\partial} \cap L_{\beta}^{\partial} \to \Hom(L_{\gamma}, \mathbb{Z})$ by $d'^1(d) = \langle d , - \rangle_{\Sigma}$. It is clear that this is a coboundary map. Moreover, for $x \in L_{\gamma}$, we have
\[ (d^1 \phi (d))(x) = \langle d , x \rangle_{\Sigma}. \]
Therefore $((C^Y)^1, d^1)$ is isomorphic to $(L_{\alpha}^{\partial} \cap L_{\beta}^{\partial}, d'^1)$.
\end{proof}
\begin{corollary}
A trisected 4-manifold $X$ admits a spin structure if and only if there exists $d \in H_1(\Sigma, \partial \Sigma)$ such that $\langle d , x \rangle_{\Sigma} = c'(x) \pmod{2}$ for all $x \in L_{\alpha} \oplus L_{\beta} \oplus L_{\gamma}$.
\end{corollary}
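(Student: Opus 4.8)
The plan is to argue exactly as in the corollary preceding this one, but with the complex $C^Z$ of Theorem \ref{theorem2} in place of $C^Y$. Since $X$ is oriented we have $w_1(X)=0$, so $X$ admits a spin structure if and only if $w_2(X)=0$ in $H^2(X;\mathbb{Z}_2)$. By Theorem \ref{theorem4}, the homomorphism $c'$ reduced modulo $2$ is a cocycle representing $w_2(X)$ in the cochain complex obtained by dualizing $C^Z$: the deformation retract $X\simeq Z$ identifies $H^*(X;\mathbb{Z}_2)$ with the cohomology of the handle chain complex of $Z$, whose degree-two chain group is $C'_2=L_{\alpha}\oplus L_{\beta}\oplus L_{\gamma}$, so that $c'\in(C^Z)^2$. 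Consequently $w_2(X)=0$ if and only if $c'$ is a coboundary, that is, $c'=d^1(\psi)$ for some $\psi$ in the first cochain group $(C^Z)^1$ taken with $\mathbb{Z}_2$-coefficients.

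It then remains to translate the condition ``$c'$ is a coboundary'' into the stated criterion. Because $C'_1=H_1(\Sigma)$ is free, the first integral cochain group is $(C^Z)^1=\Hom(H_1(\Sigma),\mathbb{Z})$, and the coboundary $d^1\colon (C^Z)^1\to(C^Z)^2$ is the dual of $\iota\colon L_{\alpha}\oplus L_{\beta}\oplus L_{\gamma}\to H_1(\Sigma)$. By Lefschetz duality for the surface $\Sigma$ with boundary, the intersection pairing $\langle-,-\rangle_{\Sigma}\colon H_1(\Sigma,\partial\Sigma)\times H_1(\Sigma)\to\mathbb{Z}$ is nondegenerate, so $d\mapsto\langle d,-\rangle_{\Sigma}$ is an isomorphism $H_1(\Sigma,\partial\Sigma)\xrightarrow{\sim}\Hom(H_1(\Sigma),\mathbb{Z})=(C^Z)^1$. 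Under this identification, for $d\in H_1(\Sigma,\partial\Sigma)$ and $x\in L_{\alpha}\oplus L_{\beta}\oplus L_{\gamma}$ one gets $(d^1 d)(x)=\langle d,\iota(x)\rangle_{\Sigma}=\langle d,x\rangle_{\Sigma}$, exactly as in the proof of the preceding corollary.

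Finally I would handle the coefficients: since $H_1(\Sigma)$ is free, every element of $\Hom(H_1(\Sigma),\mathbb{Z}_2)$ lifts to an integral homomorphism, hence to some $d\in H_1(\Sigma,\partial\Sigma)$ via the isomorphism above, so passing between integral $d$ and mod-$2$ cochains costs nothing. Therefore $c'$ being a mod-$2$ coboundary is equivalent to the existence of $d\in H_1(\Sigma,\partial\Sigma)$ with $\langle d,x\rangle_{\Sigma}\equiv c'(x)\pmod 2$ for all $x\in L_{\alpha}\oplus L_{\beta}\oplus L_{\gamma}$, which is the asserted condition. I expect the only genuine point to verify is the identification of $(C^Z)^1$ with $H_1(\Sigma,\partial\Sigma)$: one must confirm that the intersection pairing is unimodular on $\Sigma$ (Lefschetz duality) and that the coboundary dual to $\iota$ is honestly $d\mapsto\langle d,-\rangle_{\Sigma}$ on the summand $L_{\alpha}\oplus L_{\beta}\oplus L_{\gamma}$; the remainder is the standard passage from ``$w_2=0$'' to ``the representing cocycle is a coboundary'' together with the free-module lifting of mod-$2$ cochains.
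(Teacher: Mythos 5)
Your proposal is correct and follows essentially the same route as the paper: identify $(C^Z)^1$ with $H_1(\Sigma,\partial\Sigma)$ via the universal coefficient theorem together with Lefschetz--Poincar\'e duality, check that under this identification the coboundary map dual to $\iota$ becomes $d\mapsto\langle d,-\rangle_{\Sigma}$, and then translate ``$w_2=0$'' into ``$c'$ is a coboundary.'' The only difference is one of detail: you spell out the unimodularity of the pairing and the lifting of mod-$2$ cochains to integral ones, points the paper leaves implicit by deferring to the proof of the preceding corollary.
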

\begin{proof}
$(C^Z)^1$ is isomorphic to $H_1(\Sigma, \partial \Sigma)$ by the universal coefficient theorem and the Poincar\'e duality. Let $\phi \colon (C^Z)^1 \to H_1(\Sigma, \partial \Sigma)$ be this isomorphism, and then we can show this corollary in the same way as Corollary 6.1.
\end{proof}
\vspace{1ex}

Calculating $- \langle j_{\nu_i} , \nu_j \rangle_{\Sigma}$ in the same way as the proofs of Theorem \ref{theorem3} and \ref{theorem4}, we obtain the linking matrices regarding to framed links of 2-handles.
\begin{proposition} \label{proposition6.3}
Each linking matrix of $X$ regarding to handle decompositions $Y$ and $Z$ is $_{\gamma}Q_{\beta , \partial} \, R^g_{p,b} \hspace{1ex} ^{\alpha}_{\partial}Q_{\gamma}$ and  $_{\nu}Q_a \,  S_{g,b} \, \hspace{0.3ex} _aQ_{\nu}$, respectively.
\end{proposition}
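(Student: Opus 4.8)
The plan is to read the linking matrix directly off the framed link of attaching circles of the $2$-handles, using that the proofs of Theorems~\ref{theorem3} and~\ref{theorem4} have already done essentially all of the work. For a $4$-manifold presented by a handle decomposition, the intersection form expressed in the basis dual to the $2$-handles is the symmetric linking matrix, whose diagonal entry is the framing coefficient of the corresponding attaching circle and whose off-diagonal entries are the pairwise linking numbers of the attaching circles. Hence it suffices to compute, for every ordered pair of attaching circles $\nu_i,\nu_j$, the quantity $-\langle j_{\nu_i},[\nu_j]\rangle_{\Sigma}$, where $j_{\nu_i}$ is the element of $L_{\mu}^{\partial}$ constructed in those proofs.

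First I would handle the decomposition $Y$. As in the proof of Theorem~\ref{theorem3}, the attaching circles are the curves $\gamma$ sitting on $\partial C_{\alpha}\cong\Sigma_{g+p+b-1}\subset S^3$, and Lemma~\ref{lemma4.3}(2) yields $\lk(\gamma_i,\gamma_j)=-\langle j_{\gamma_i},[\gamma_j]\rangle_{\Sigma}$ for each pair. The key point is that Lemma~\ref{lemma4.3}(1) furnishes a single element $j_{\gamma_i}$ computing the linking of $\gamma_i$ with every curve $\gamma_j$ satisfying $\iota_M[\gamma_j]=0$, so the same expression $j_{\gamma_i}=-\sum_k ({}_{\gamma}Q_{\beta,\partial}\,R^g_{p,b})_{ik}\,[\alpha^{\partial}_k]$ obtained there for the diagonal also governs the off-diagonal entries. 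Substituting and recognizing $\langle[\alpha^{\partial}_k],[\gamma_j]\rangle_{\Sigma}$ as the $(k,j)$-entry of ${}^{\alpha}_{\partial}Q_{\gamma}$, the $(i,j)$-entry of the linking matrix equals $({}_{\gamma}Q_{\beta,\partial}\,R^g_{p,b}\,{}^{\alpha}_{\partial}Q_{\gamma})_{ij}$, as claimed. The computation for $Z$ is formally identical: one replaces $j_{\gamma_i}$ by $j_{\nu_i}=-\sum_k ({}_{\nu}Q_a\,S_{g,b})_{ik}\,[a_k]$ from the proof of Theorem~\ref{theorem4} and uses $\langle[a_k],[\nu_j]\rangle_{\Sigma}=({}_aQ_{\nu})_{kj}$ to arrive at ${}_{\nu}Q_a\,S_{g,b}\,{}_aQ_{\nu}$.

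The main point to check — and the only real obstacle — is that the diagonal entry is consistent with the off-diagonal formula. The diagonal is a framing coefficient, taken with respect to the framing induced by $\Sigma$, whereas the off-diagonal entries are ordinary linking numbers of disjoint curves; I would confirm that the self-pairing $-\langle j_{\nu_i},[\nu_i]\rangle_{\Sigma}$ coincides with this $\Sigma$-framing. This is precisely the identification already invoked (through the Gompf--Stipsicz theorem together with Lemma~\ref{lemma4.3}) in the proofs of Theorems~\ref{theorem3} and~\ref{theorem4} when recovering $c$ and $c'$ from their diagonal values, so nothing new is required. Granting it, the entire matrix equals $-\langle j_{\nu_i},[\nu_j]\rangle_{\Sigma}$, which factors as the stated product; its symmetry is then automatic, and no sign bookkeeping beyond that of Theorems~\ref{theorem3} and~\ref{theorem4} is needed.
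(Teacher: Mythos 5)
Your proposal is correct and takes essentially the same route as the paper, whose entire proof is the remark preceding the proposition: one computes $-\langle j_{\nu_i},\nu_j\rangle_{\Sigma}$ exactly as in the proofs of Theorems~\ref{theorem3} and~\ref{theorem4}. Your two key observations---that the element $j_{\nu_i}$ furnished by Lemma~\ref{lemma4.3}(1) depends only on $\nu_i$ and hence governs all off-diagonal linking numbers via Lemma~\ref{lemma4.3}(2), and that the diagonal entry agrees because the $\Sigma$-framed push-off of $\nu_i$ is disjoint from and homologous to $\nu_i$ in $\Sigma$---are precisely what the paper's ``in the same way'' is relying on.
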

\vspace{1ex}

Finally, we compute the second Stiefel-Whitney class of the $D^2$ bundle over $S^2$ with Euler number $-1$ which admits the following diagram.
\begin{figure}[H]
\centering
\includegraphics[scale=0.9,pagebox=cripbox]{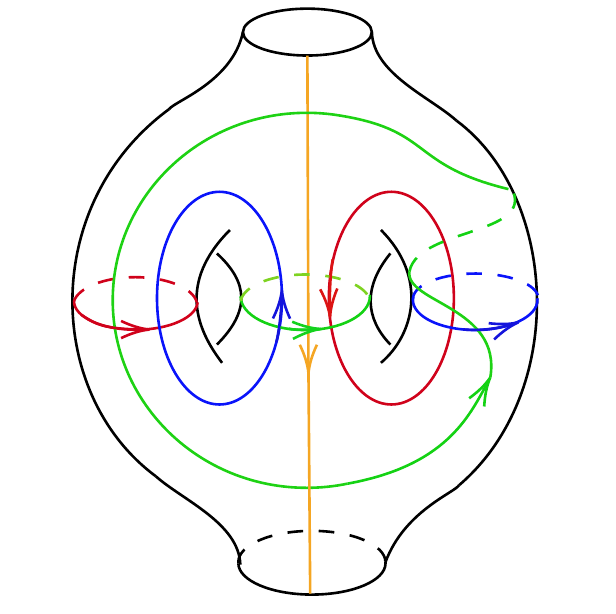}
\caption{A $(2,1;0,2)$-relative trisection diagram for the $D^2$ bundle over $S^2$ with Euler number $-1$, where the central green curve is $\gamma_1$.}
\end{figure} \noindent
We obtain the following intersection matrix;
\begin{gather}
   {}_{\gamma}Q_{\beta} = \begin{pmatrix} 1 & 0 \\ 0 & -1 \end{pmatrix} , \quad {}_{\alpha}Q_{\gamma} = \begin{pmatrix} 0 & -1 \\ 1 & 1 \end{pmatrix} ,  \notag \\[1ex]
   {}_{\beta}Q_{\alpha} = I_2 , \quad {}_aQ_{\gamma} = \begin{pmatrix} 1 & 0 \end{pmatrix}. \notag
\end{gather}
Using Proposition \ref{proposition6.3}, we can compute a linking matrix:
\[_{\gamma}Q_{\beta , \partial} \, R^g_{p,b} \hspace{1ex} ^{\alpha}_{\partial}Q_{\gamma} = \begin{pmatrix} 0 & -1 \\ -1 & -1 \end{pmatrix}. \]
Therefore, a representation matrix of $c$ regarding to the basis $\{[\gamma_1], [\gamma_2] \}$  is $(0 \,\, -1)$. Moreover, since $L_{\alpha}^{\partial} \cap L_{\beta}^{\partial}$ has a basis $\{[a_1]\}$,and $_{a}Q_{\gamma}$ is equal to $(1 \,\, 0)$, this disk bundle does not admit a spin structure.

\vspace{1.0truecm}

\end{document}